\documentclass[12pt,a4paper]{amsart}
\usepackage[totalwidth=15.75cm,totalheight=22.275cm]{geometry}
\usepackage{amsmath,amsfonts,amssymb,verbatim,graphicx,float,amsthm}
\usepackage{color}
\usepackage{mymathmacros}
\usepackage[shortlabels]{enumitem}
\usepackage{mathtools}
\usepackage{hyperref}
\usepackage{subfig}
\captionsetup[subfigure]{margin=0pt, parskip=0pt, hangindent=0pt, indention=0pt, labelformat=parens, labelfont=rm}

\numberwithin{equation}{section}

\newcommand{\Deriv}{{\rm D}}
\newcommand{\deriv}{{\rm d}}

\newcommand{\Par}{\operatorname{Par}}
\newcommand{\Attr}{\operatorname{Attr}}
\newcommand{\AbsO}{F_{\Attr}}
\newcommand{\ParO}{F_{\Par}}
\newcommand{\Orb}{\mathcal{O}}
\newcommand{\dpar}{d_{\operatorname{Par}}}
\newcommand{\vv}{\mathbf{v}}
\newcommand{\rmin}{r_{\operatorname{min}}}
\newcommand{\epssig}{\epsilon_{\sigma}}

\newcommand{\addr}{\operatorname{addr}}
\newcommand{\f}{\mathfrak{f}}
\newcommand{\n}{m}

\usepackage
     {pagesel}




\newtheorem{thm}{Theorem}[section]%

\newtheorem{lem}[thm]{Lemma}%
\newtheorem{cor}[thm]{Corollary}%
\newtheorem{corollary}[thm]{Corollary}
\newtheorem{prop}[thm]{Proposition}%
\newtheorem{obs}[thm]{Observation}%

\newtheorem{proposition}[thm]{Proposition}

\newtheorem{theorem}[thm]{Theorem}

\theoremstyle{definition}
\newtheorem{rmk}[thm]{Remark}%
\newtheorem{example}[thm]{Example}%
\newtheorem{defn}[thm]{Definition}%

\newcommand{\tef}{transcendental entire function}

\newcommand\qfor{\quad\text{for }}
\newcommand{\B}{\mathcal{B}}

%
%
\def\98{98}
\def\0{0}

\makeatletter
\def\blfootnote{\xdef\@thefnmark{}\@footnotetext}
\makeatother
\begin{document}
\title[Geometrically finite entire functions]{Geometrically finite \\ transcendental entire functions}

\author{Mashael Alhamed} 
\address{Department of Mathematics \\
     Princess Nora bint Abdulrahman University \\
  Riyadh 11538 \\
  Saudi Arabia}
\email{maalhamad@pnu.edu.sa}

\author{Lasse Rempe} 
\address{Dept. of Mathematical Sciences \\
	 University of Liverpool \\
   Liverpool L69 7ZL\\
   UK \\ 
	 \textsc{\newline \indent \href{https://orcid.org/0000-0001-8032-8580}{\includegraphics[width=1em,height=1em]{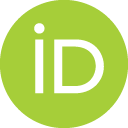} {\normalfont https://orcid.org/0000-0001-8032-8580}}}}
\email{l.rempe@liverpool.ac.uk}

\author{Dave Sixsmith}
\address{Dept. of Mathematical Sciences \\
	 University of Liverpool \\
   Liverpool L69 7ZL\\
   UK \\
	\textsc{\newline \indent \href{https://orcid.org/0000-0002-3543-6969}{\includegraphics[width=1em,height=1em]{orcid2.png} {\normalfont https://orcid.org/0000-0002-3543-6969}}}}
\email{david.sixsmith@open.ac.uk}
\subjclass[2010]{Primary 37F10; Secondary 30D05, 30F45, 37F20.}
\thanks{The first author thanks Princess Nora bint Abdulrahman University in Saudi Arabia for supporting her doctoral studies at the University of Liverpool.}

%
%
%
%
\begin{abstract}
 For polynomials, local connectivity of Julia sets is a much-studied and important
   property. Indeed, when the Julia set of a polynomial of degree $d\geq 2$
   is locally connected, the topological dynamics can be completely described 
   as a quotient of a much simpler system: angle $d$-tupling on the circle.
   
  For a transcendental entire function, local connectivity is 
    less significant, but we may still ask for a description of 
    the topological dynamics as the quotient of a simpler system. To this end,
    we introduce the notion of \emph{docile} functions (Definition~\ref{defn:docile}): a 
    transcendental entire function with bounded postsingular set 
    is docile if it is the quotient of a 
    suitable \emph{disjoint-type} function. Moreover, we prove
    docility for the large class of \emph{geometrically finite} transcendental entire functions 
    with \emph{bounded criticality} on the Julia set. This 
     can be seen as an analogue of the local connectivity of Julia sets for
     geometrically finite polynomials, first proved by Douady and Hubbard, and 
     extends previous work of the second author and of Mihaljevi\'c for
     more restrictive classes of entire functions.


  We deduce a number of further results for geometrically
    finite functions with bounded criticality, concerning bounded Fatou
    components and the local connectivity of Julia sets. In particular, we show that
    the Julia set of the sine function is locally connected, answering a question raised
    by Osborne. 
\end{abstract}
\maketitle
%
%
%
%
\section{Introduction}
Suppose that $f$ is an entire function, and denote by $f^n$ the $n$th iterate of $f$. 
 The \emph{Fatou set} $F(f)$ consists of those points where the dynamics 
 of $f$ is ``regular'', or, more precisely, where the iterates form a normal family.
  Its complement is the \emph{Julia set} $J(f)$; it is the locus of ``chaotic'' behaviour. 
  
 Let $f$ be a polynomial of degree $d\geq 2$ with connected Julia set. 
   By the Riemann mapping theorem there is a conformal isomorphism $\theta$ mapping
   the complement of the unit disc in the Riemann sphere to the attracting basin of
   infinity for $f$, in such a way that $\theta(\infty)=\infty$. This 
   isomorphism is in fact a conjugacy between $f$ and the action of
   $z\mapsto z^d$ \cite[Theorem 9.5]{Milnor}.
   
  If $\theta$ extends continuously to the unit circle, then the extension is a semiconjugacy
    between the action of $z^d$ on the circle (i.e.,\ angle $d$-tupling) and the action of $f$ on its Julia set. Therefore we obtain
    a complete description of the interesting topological dynamics of $f$. By 
    a classical theorem of Carath\'eodory and Torhorst~\cite{caratheodoryprimeends,torhorst}, 
    such a continuous
    extension exists if and only if $J(f)$ is locally connected; 
    see \cite[Theorem 2.1]{pommerenke}. It is for this reason that
    local connectivity of Julia sets is one of the central topics of study in polynomial 
    dynamics. Many, but not all, polynomial Julia sets are locally connected; see
    e.g.\ \cite{milnorlocalconnectivity} for a discussion. 

 The study of the dynamics of \emph{transcendental entire functions}
   has also received considerable attention. 
  Here infinity is an essential singularity, rather than an attracting
  fixed point, and the \emph{escaping set} $I(f)$, consisting of points whose
  orbits under $f$ converge to this singularity, is no longer open. Furthermore, 
   the set of \emph{singular values} $S(f)$, which generalises the
   set of critical values of a polynomial, may be very complicated. 
   
 The 
   \emph{Eremenko-Lyubich class} $\B$ consists of those {\tef}s for which 
   $S(f)$ is bounded; see \cite{eremenkolyubich,davesurvey}. It 
     is characterised by strong expansion properties near infinity~\cite[Lemma~1]{eremenkolyubich},
   which in turn lead to a certain rigidity of the structure of the set $I(f)$~\cite{rigidity}. 
   As discussed above, many methods in the polynomial setting rely on 
   studying the dynamics via structures in the escaping set. Therefore
   the class $\B$ is perhaps the best setting for adapting these methods to 
   transcendental entire functions~-- despite the major differences that remain. 
      
A function in $\B$ is said to be of 
 \emph{disjoint type} if its Fatou set is connected and compactly contains 
  the postsingular set. (Recall that
  the \emph{postsingular set} $P(f)$ is the closure of the forward orbit of the set
  of singular values.) The dynamical properties of these maps were 
   studied in detail in \cite{lassearclike}, which gave a comprehensive topological
    description
    of their Julia sets.

 It is natural to consider functions of disjoint type to be the simplest maps in their
  given parameter space. Here by the \emph{parameter space} of a map $f$, we mean the 
  class of all entire functions that are \emph{quasiconformally equivalent} 
  to $f$. In other words, they are obtained from $f$ by pre- and post-composition with 
   quasiconformal
  homeomorphisms; see, for example, \cite[p.~245]{rigidity}. 
 It is straightforward to show that if $f \in \B$, then $g(z) \defeq f(\lambda z)$ is
   of disjoint type for all sufficiently small values of $\lambda \ne 0$
    \cite[p.392]{baranskikarpinska}; in particular, every such $f$
    contains a disjoint-type function in its parameter space. Moreover, any two 
    quasiconformally equivalent functions of disjoint type are quasiconformally
    conjugate on their Julia sets \cite[Theorem~3.1]{rigidity}. 
    
 So we may consider a function of disjoint type to play the same role for its 
   parameter space as $z\mapsto z^d$ does for polynomials of degree $d$. It appears
   natural to ask for which $f\in \B$ the dynamical properties of the
   disjoint type function $g$ can be ``transferred'' to the original map using 
   a semiconjugacy from the Julia set of $g$ to that of $f$. 

The first general result in this direction was given in \cite[Theorem~1.4]{rigidity}, 
 which states that such a semiconjugacy can be constructed for all \emph{hyperbolic} 
  functions. (A {\tef} $f$ is 
  hyperbolic if $F(f) \cap P(f)$ is compact, and $J(f) \cap P(f)$ is empty;
 see \cite{lassedavehyperbolic}.) 
  This result was extended by Mihaljevi\'c \cite{semiconjugacies}, who showed that such a 
  construction is possible for a larger class of functions she calls \emph{strongly subhyperbolic}. 
  Here a {\tef} $f$ is called \emph{subhyperbolic} if the set $F(f) \cap P(f)$ is compact, and 
  $J(f) \cap P(f)$ is finite. It is called strongly subhyperbolic if, in addition, 
   $f$ has \emph{bounded criticality} on
   the Julia set; that is, $J(f)$ contains no finite asymptotic values of $f$, and the local 
    degree of $f$ 
    at the points in $J(f)$ is uniformly bounded.
 It is easy to see that the result becomes false in general 
  when there are asymptotic values in the 
  Julia set. In fact, even for the postsingularly finite function $z\mapsto 2\pi i e^z$, many
  questions about the topological dynamics remain unanswered; see \cite{stephenthesis}.

  Given the fundamental importance that local connectivity of the Julia set plays 
   for polynomials, it seems desirable to define an analogue of this notion for
   transcendental entire functions, by formalising the desired properties of the
   semiconjugacy above. This is the first aim of our article. Let 
   $f$ be a function with bounded postsingular set $P(f)$, and let 
     $\lambda$ be sufficiently small to ensure that 
     $g(z) \defeq f(\lambda  z)$ is of disjoint type. It follows from the results of~\cite{rigidity} that
     there is a natural (but not necessarily continuous) bijection $\theta \colon I(g)\to I(f)$ between the 
     \emph{escaping sets} of $g$ and $f$; see \cite[Section~7]{dreadlocks} and
      Section~\ref{sec:docility} below. (Recall that a point belongs to the escaping set if its orbit tends to infinity.) 
    
    We may think of $\theta$ as an analogue of the Riemann map in the case of polynomials.
      Hence the following definition makes sense as an analogue of local connectivity of the Julia set. 
       
   \begin{defn}[Docile functions]\label{defn:docile}
      The postsingularly bounded function $f$ is called \emph{docile} if the bijection $\theta\colon I(g)\to I(f)$ extends to 
       a continuous function 
        \[ \theta\colon J(g)\cup\{\infty\} \to J(f)\cup \{\infty\}. \]
   \end{defn}

 With this new terminology, we may state the main result of Mihaljevi\'c in \cite{semiconjugacies} as follows.
\begin{thm}\label{thm:subhyperbolicdocile}
 A strongly subhyperbolic entire function is docile. 
\end{thm}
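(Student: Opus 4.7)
The plan is to extend $\theta$ by constructing a singular Riemannian (orbifold) metric near $J(f)$ under which some iterate of $f$ is uniformly expanding, and then defining $\theta$ on non-escaping Julia points as the limit of shrinking nested preimages prescribed by the $g$-itinerary.

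To set the stage I would fix $\lambda$ sufficiently small that $g(z)\defeq f(\lambda z)$ is of disjoint type, and recall from Section~\ref{sec:docility} and \cite{rigidity} the canonical bijection $\theta\colon I(g)\to I(f)$ which conjugates $g$ to $f$ on escaping sets. Next, I would construct an orbifold metric $\rho$ on a neighborhood of $J(f)\cap\{|z|\le R\}$ whose cone points are the finitely many points of $J(f)\cap P(f)$, with ramification indices equal to the local degrees of $f$ there; bounded criticality guarantees these indices are finite. Combining this with the standard Eremenko--Lyubich expansion for $f\in\B$ on $\{|z|>R\}$ — available because $J(f)$ contains no asymptotic values and lies outside a neighborhood of the bounded set $S(f)$ — one obtains an open set $V\supset J(f)$ carrying a conformal metric $\rho$ for which some iterate $f^N$ is uniformly $\rho$-expanding on $V\cap f^{-N}(V)\cap J(f)$.

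Granted such expansion, I would extend $\theta$ by a contraction argument. Any $z\in J(g)$ admits a neighborhood basis in $J(g)$ obtained by pulling back fixed bounded pieces of $J(g)$ along the $g$-orbit of $z$ through appropriate branches of $g^{-n}$; such a basis exists because $g$ is of disjoint type. Via the functional equation $f\circ\theta=\theta\circ g$ valid on $I(g)$, the $\theta$-image of each basis element is contained in the corresponding $f^{-n}$-branch applied to a bounded piece of $J(f)$. The $\rho$-expansion forces the $\rho$-diameters of these preimage pieces to decay geometrically, so they collapse to a unique point, which we declare to be $\theta(z)$. The same telescoping estimate yields $\theta(z_k)\to\theta(z)$ whenever $z_k\to z$ in $J(g)$. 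Continuity at $\infty$ is automatic, since $\theta$ preserves the tract structure of \cite{rigidity}: escape of $z_k$ in $J(g)$ forces the matching $\theta(z_k)$ to escape in $J(f)$.

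The main obstacle is gluing the two sources of expansion — the orbifold contribution near the finitely many postsingular Julia points and the Eremenko--Lyubich estimate near infinity — into a single metric with uniform expansion along \emph{all} of $J(f)$. Across the intermediate bounded annular region where neither estimate applies directly, one must show that $J(f)$ stays a definite $\rho$-distance from the postsingular set; this is precisely where compactness of $F(f)\cap P(f)$ together with the finiteness of $J(f)\cap P(f)$ is used, upgrading the two local estimates to a global expansion of some $f^N$ via a Koebe/normal-families argument. Once this uniformly expanding iterate is in hand, the contraction argument above, and verification that the resulting extension is a continuous semiconjugacy, follow from standard arguments.
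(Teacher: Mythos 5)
Your overall strategy matches the route the paper attributes to Mihaljevi\'c-Brandt: build a singular conformal metric near $J(f)$ in which $f$ (or an iterate) is uniformly expanding, then extend $\theta$ by a shrinking-pullback argument whose convergence follows from summable contraction. Where your plan diverges is in how the expanding metric is produced, and two parts of your construction need repair.

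First, the ramification indices cannot be ``the local degrees of $f$'' at $J(f)\cap P(f)$. For $f$ to lift to an orbifold covering $f\colon\Orb'\to\Orb$ with a holomorphic (non-covering) inclusion $\Orb'\hookrightarrow\Orb$ — which is what drives the pointwise expansion via Pick's theorem, Propositions~\ref{prop:dist}--\ref{prop:expansion} — one needs $v(w)\cdot\deg(f,w)$ to divide $v(f(w))$ for every $w$, and iterating this forces $v(z)$ to be divisible by $\deg(f^m,w)$ for all $m$ and all $w$ with $f^m(w)=z$. The paper therefore uses $v(z)=2\operatorname{lcm}\{\deg(f^m,w)\colon f^m(w)=z\}$; its finiteness relies on forward-invariance and finiteness of $J(f)\cap P(f)$ together with bounded criticality, not just bounded criticality.

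Second, the gluing of a bounded orbifold piece with an Eremenko--Lyubich ``cylinder'' estimate near infinity is where your argument is least convincing. Those two metrics differ by a $\log|z|$ factor at infinity, and you would have to control what happens when $z$ and $f(z)$ straddle the seam, as well as produce uniform expansion in the intermediate bounded region where you invoke a vague ``Koebe/normal-families'' argument. The paper avoids all of this by working with a single global orbifold $\Orb_f=(S,v)$, $S=\C\setminus\overline D$, and then showing that the expansion $\|\Deriv f\|_{\Orb_f}=\rho_{\Orb'}/\rho_{\Orb_f}$ is $>1$ everywhere (Pick), blows up toward the finite part of $\partial\tilde S$, and, crucially, blows up as $z\to\infty$ by Proposition~\ref{prop:relgrowth} via Lemma~\ref{lem:preimages}. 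That last step is the structural replacement for your normal-families appeal; normality alone cannot give uniformity over the unbounded Julia set. With this in hand, $f$ itself is uniformly expanding in the subhyperbolic case, so passing to an iterate $f^N$ as you do is unnecessary (though harmless). In short: same skeleton, but the metric should be the global preimage-orbifold metric with the lcm-based ramification, and the near-infinity expansion should come from the preimage structure of $f\in\B$ rather than from a two-metric gluing.
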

  
A particularly important class of polynomials 
  consists of those for which  
  each critical point in the Julia set has a finite orbit; see, for example, \cite[Theorem~4.3]{carlesongamelin} and \cite{McMullen}. 
  These maps are called \emph{geometrically finite}. The natural extension of this definition to
  {\tef}s was given by Mihaljevi\'c \cite{geometricallyfinite}. We say that a {\tef}
  $f$ is \emph{geometrically finite} if $F(f) \cap S(f)$ is compact and 
  $J(f) \cap P(f)$ is finite. We also say that $f$ is \emph{strongly geometrically finite} if it is 
   geometrically finite with bounded criticality on the Julia set. 
   As noted in \cite{geometricallyfinite}, all subhyperbolic maps are geometrically finite, 
   while a geometrically finite map is subhyperbolic if and only if it has no
   parabolic orbits. Clearly all geometrically finite maps are in the class $\B$.
  Our main result is the followin extension of Theorem~\ref{thm:subhyperbolicdocile}. 

 \begin{thm}\label{thm:geometricallyfinitedocile}
   A strongly geometrically finite entire function is docile. 
 \end{thm}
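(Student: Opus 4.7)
Choose $\lambda\neq 0$ small enough that $g(z) = f(\lambda z)$ is of disjoint type, and let $\theta\colon I(g)\to I(f)$ be the bijection from \cite{rigidity}. The goal is to extend $\theta$ continuously to $J(g)\cup\{\infty\}$. By hypothesis $J(f)\cap P(f)$ is finite, and it decomposes into points whose forward orbit is eventually expanding and points belonging to the grand orbit of finitely many parabolic cycles. Continuity at $\infty$ is a standard tail estimate from the class-$\B$ expansion near infinity combined with the dreadlock picture of $J(g)$ in \cite{lassearclike,dreadlocks}; so the main task is to produce a continuous extension of $\theta$ near every finite $z_0\in J(g)$, and the only essentially new point compared with Theorem~\ref{thm:subhyperbolicdocile} is to accommodate the parabolic cycles.

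The plan is to refine the expanding orbifold metric used in \cite{semiconjugacies} so that it behaves well near parabolic cycles. Away from those cycles, the subhyperbolic argument already gives an orbifold metric $\rho$ in which $f$ is uniformly expanding, using the finiteness of the non-parabolic part of $J(f)\cap P(f)$, the absence of asymptotic values in $J(f)$, and bounded criticality on $J(f)$. Near each parabolic periodic point $p$, I would modify $\rho$ by gluing in a chart given by Fatou coordinates, so that an appropriate iterate of $f$ becomes weakly expanding inside each attracting petal and strictly expanding outside a shrinking neighbourhood of $p$. This is the transcendental analogue of the construction for geometrically finite polynomials discussed in \cite{carlesongamelin}. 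Dually, because $g$ is of disjoint type, the standard hyperbolic metric on $F(g)$ strictly contracts every $g^{-1}$-branch relevant to a point of $J(g)$.

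To verify continuity at a given $z_0\in J(g)$, I would take a sequence $w_n\to z_0$ in $I(g)$ and show that $(\theta(w_n))$ is Cauchy in the spherical metric by pulling $f$- and $g$-orbits back in parallel. The orbit of each $w_n$ and $\theta(w_n)$ breaks into \emph{hyperbolic} segments, where the subhyperbolic argument from \cite{semiconjugacies} produces a definite $\rho$-contraction at each step, and \emph{parabolic} segments, where the $f$-orbit sits inside a petal and is analysed via the Fatou-coordinate model $\zeta\mapsto \zeta+1$. The key observation is that two $f$-orbits entering the same petal close together leave it after a number of iterates differing by a uniformly bounded amount, and with Fatou coordinates whose separation is essentially unchanged from entry; meanwhile the associated $g$-orbits (which never enter a parabolic region, because $g$ has none) remain in a bounded hyperbolic region of $F(g)$ over the same block of time and are strictly contracted by $g^{-1}$ there. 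Combining both types of segments should yield a Cauchy estimate that is uniform in $n$ and independent of the approximating sequence.

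The heart of the argument, and the main obstacle I would expect to encounter, is precisely this asymmetry: a single $f$-orbit may spend arbitrarily many iterates in a parabolic petal while the corresponding $g$-orbit is repelled at a uniform hyperbolic rate. Establishing the petal-entry/petal-exit correspondence as a \emph{uniform} estimate, so that the absence of $\rho$-contraction during a long parabolic sojourn is offset by the additional hyperbolic contraction accumulated by $g^{-1}$ during the same period, is where the essential new technical work must be done. Once this is in place, standard Carath\'eodory-type arguments yield the continuous extension, and Theorem~\ref{thm:geometricallyfinitedocile} follows.
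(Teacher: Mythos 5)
Your overall architecture is close to the paper's: choose $\lambda$ to make $g$ disjoint type, reduce to continuity of the limit $\theta$, modify the expanding orbifold metric of \cite{semiconjugacies} near parabolic cycles, and decompose orbits into ``hyperbolic'' and ``parabolic'' blocks. This matches Sections~\ref{S.constructO}--\ref{S.conjugacy}.

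However, your proposed mechanism for the parabolic blocks is the wrong tool, and it is precisely the point you yourself flag as ``where the essential new technical work must be done.'' You propose to glue in Fatou coordinates, in which the relevant iterate acts as $\zeta\mapsto\zeta+1$, and then observe that separations are ``essentially unchanged'' during a parabolic sojourn; you hope to offset this absence of expansion by the uniform contraction of $g^{-1}$. But the quantity one must control is $d_\sigma(\vartheta^{k+1}(z),\vartheta^{k}(z))$, the distance between successive pullbacks on the $f$-side, and this is governed entirely by the expansion of $f^k$ along the $f$-orbit; the contraction of $g^{-1}$ does not enter that estimate. With the Fatou-coordinate metric, a parabolic sojourn of length $n$ contributes a factor of order $1$, not a factor growing with $n$; since $n$ may be comparable to $k$, the resulting bound for $d_\sigma(\vartheta^{k+1}(z),\vartheta^{k}(z))$ is merely bounded, and the series fails to converge, so the Cauchy argument (Observation~\ref{obs:docileconvergence}) breaks down.

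The paper's solution is to interpolate between the orbifold metric and the Fatou-coordinate metric: near $\Par(f)$ it uses the power-law density $\omega(z)=\dpar(z)^{-s}$ with $s=1-\frac{1}{2n_\sigma}<1$, strictly below the exponent $p+1$ of the Fatou-coordinate metric. This is ``soft'' enough that the metric glues to the orbifold metric (Proposition~\ref{prop:metric}), yet still produces a definite polynomial expansion rate $\|\Deriv\f^k(z)\|_\sigma\gtrsim k^\tau$ with $\tau>1$ during cascades in a repelling sector (Propositions~\ref{prop:nearparabolic}, \ref{prop:petal}, \ref{prop:nicederivatives}). The exponent $\tau>1$ is exactly what makes $\sum_k d_\sigma(\vartheta^{k+1}(z),\vartheta^k(z))<\infty$, and no input from the $g$-side is needed. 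A secondary inaccuracy: points of $J(f)$ near a parabolic point lie in \emph{repelling} sectors, not attracting petals; it is the thin repelling sectors in which the expansion estimate must be established, and their geometry (Propositions~\ref{prop:wbpetal}, \ref{prop:nearparabolic}) is what supplies the polynomial rate.
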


An immediate consequence can be stated more directly.

\begin{cor}[Semiconjugacies for geometrically finite functions]\label{cor:main}
Suppose that $f$ is strongly geometrically finite, and that $\lambda \ne 0$ is such that the map $g(z) \defeq f(\lambda z)$ is of disjoint type. Then there exists a continuous surjection $\vartheta \colon J(g) \to J(f)$, such that: 
 \begin{enumerate}[(a)]
   \item $f \circ \vartheta = \vartheta \circ g$;\label{mainitsaconj}
   \item $\lim_{z\to\infty} \vartheta(z) =  \infty $;\label{maintoinf}
   \item $\vartheta\colon I(g)\to I(f)$ is a homeomorphism and $\vartheta^{-1}(I(f)) = I(g)$;\label{mainhomeoonI}
	 \item For each component $C$ of $J(g)$ the map $\vartheta\colon  \to \vartheta(C)$ is a homeomorphism.\label{mainhomeooncomponents}
 \end{enumerate}
\end{cor}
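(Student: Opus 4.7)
The plan is to derive Corollary~\ref{cor:main} directly from Theorem~\ref{thm:geometricallyfinitedocile}. Strong geometric finiteness of $f$ yields docility, hence a continuous extension $\theta\colon J(g)\cup\{\infty\}\to J(f)\cup\{\infty\}$ of the natural bijection on escaping sets from Definition~\ref{defn:docile}. Setting $\vartheta \defeq \theta|_{J(g)}$, I would then verify each of the four claimed properties in turn, exploiting continuity of $\theta$ together with the density of $I(g)$ in $J(g)$ (which holds because $g\in\B$, by Eremenko--Lyubich).

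Properties (b) and (a) are essentially automatic. For (b), continuity of $\theta$ at $\infty$ combined with $\theta(\infty)=\infty$ is precisely the statement. For (a), the bijection $\theta\colon I(g)\to I(f)$ is constructed in \cite{rigidity} and \cite{dreadlocks} so as to satisfy $f\circ\theta = \theta\circ g$ on $I(g)$; density of $I(g)$ in $J(g)$ and continuity of both $f\circ\vartheta$ and $\vartheta\circ g$ then propagate this equation to all of $J(g)$.

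For (c), that $\vartheta$ restricts to a homeomorphism $I(g)\to I(f)$ is part of the rigidity statements already used to produce $\theta$. To establish the non-obvious inclusion $\vartheta^{-1}(I(f))\subseteq I(g)$, I would combine the semiconjugacy with (b): if $z\in J(g)$ and $\vartheta(z)\in I(f)$, then (a) gives $\vartheta(g^n(z)) = f^n(\vartheta(z))\to\infty$, and (b) then forces $g^n(z)\to\infty$, i.e.\ $z\in I(g)$. Surjectivity of $\vartheta\colon J(g)\to J(f)$ follows because $\vartheta(J(g))$ is compact, contains the dense set $I(f)\subset J(f)$, and cannot contain $\infty$ (by the same escaping-orbit argument, since $\theta^{-1}(\infty)=\{\infty\}$).

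The main obstacle is property (d): injectivity of $\vartheta$ on each component $C$ of $J(g)$. Here I would appeal to the topological description of disjoint-type Julia sets from \cite{lassearclike}, according to which each component of $J(g)$ carries a natural arc-like structure in which the escaping points form a dense subset. Since $\vartheta$ is already injective on $I(g)$ by (c), distinct non-escaping points $z_1, z_2 \in C$ should be separable by pairs of escaping points on either ``side'' whose $\vartheta$-images are distinct, and continuity of $\vartheta$ would then yield $\vartheta(z_1)\neq \vartheta(z_2)$. This step requires care: a priori the image of a component of $J(g)$ need not be contained in a single component of $J(f)$, so one must verify that the lifting construction underlying docility respects the component structure. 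I would expect this to follow from the component-by-component nature of the construction of $\theta$, but it is the point at which a detailed argument using the structure theorems of \cite{lassearclike} and \cite{dreadlocks} is genuinely needed, rather than a formal consequence of docility.
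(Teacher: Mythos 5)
Your overall strategy matches the paper's: Corollary~\ref{cor:main} is deduced from Theorem~\ref{thm:geometricallyfinitedocile} via a general statement extracting these four properties from docility (Proposition~\ref{prop:docilesemiconjugacy}). However, there are two genuine gaps in your outline.

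First, before you can define $\vartheta$ as a map $J(g)\to J(f)$, or even make sense of $f\circ\vartheta$ on $J(g)$, you must show that $\theta$ sends no finite point of $J(g)$ to $\infty$. Docility only gives a continuous extension $J(g)\cup\{\infty\}\to J(f)\cup\{\infty\}$. Your argument for (c) tacitly assumes the conclusion: the step ``(b) then forces $g^n(z)\to\infty$'' needs more than $\lim_{z\to\infty}\vartheta(z)=\infty$, and your parenthetical ``since $\theta^{-1}(\infty)=\{\infty\}$'' asserts precisely what must be proved. The paper closes this by observing that $X\defeq\theta^{-1}(\infty)\cap\C$ is closed in $J(g)$ and backwards-invariant under $g$ (since $\theta(g(z))=\infty$ forces $\theta(z)=\infty$), and that $g$, being of disjoint type, has no Fatou exceptional points in $J(g)$, so the backward orbit of any Julia point is dense; hence $X=\emptyset$ or $X=J(g)$, and the latter is impossible since $\theta(I(g))\subset I(f)$.

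Second, for (d), your ``separation by escaping points'' heuristic does not yield injectivity: a continuous map can be injective on a dense subset while still identifying points of the complement, since a continuum may fold onto itself along a sparse set. (Meanwhile the concern you raise about $\vartheta(C)$ spreading over several components of $J(f)$ is moot: $\vartheta(C)$ is a continuous image of a connected set, hence connected.) The paper's argument is different and quantitative. For distinct $z,w$ in the same component of the disjoint-type Julia set $J(g)$, one has $\bigl\lvert\, \lvert g^n(z)\rvert - \lvert g^n(w)\rvert \,\bigr\rvert \to \infty$ (see \cite[Lemmas~3.1 and~3.2]{RRRS}). If $z$ is non-escaping, pick a bounded subsequence $g^{n_k}(z)$; then $\lvert g^{n_k}(w)\rvert\to\infty$, so $\theta(g^{n_k}(w))\to\infty$ while $\theta(g^{n_k}(z))$ stays bounded, and the semiconjugacy then forces $\theta(z)\neq\theta(w)$. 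This is the missing ingredient, and nothing in the soft topological structure of \cite{lassearclike} alone substitutes for it.
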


\begin{remark}
Note that this statement makes explicit some properties of the semiconjugacy $\vartheta$ that were only implicit in \cite{rigidity, semiconjugacies}.
  We will see in Proposition~\ref{prop:docilesemiconjugacy} that these hold generally for all docile postsingularly bounded functions.
\end{remark}
 The key idea of the proof of Theorem~\ref{thm:subhyperbolicdocile} given in \cite{semiconjugacies}
  is to show that $f$ is
   uniformly expanding near the Julia set, with respect to a certain \emph{orbifold metric}. 
The desired semiconjugacy is then obtained by a standard construction: for a point $z\in J(g)$, we iterate forward $n$ steps under $g$,
  then pull back the resulting point $n$ times under $f$, using appropriate branches. The uniform expansion property ensures
   that this process converges, and that
  $\vartheta(z)$ depends continuously on $z$. See also \cite{leti}, in which the orbifold metric is used in a similar way to study a class of maps for which the postsingular set is  
  unbounded.

When $f$ has parabolic orbits, the function cannot be uniformly expanding near the parabolic point for any metric, and new techniques are required. The crux of the argument in this paper is to modify 
  a suitable orbifold metric in a neighbourhood of each parabolic cycle in such a way that there is still sufficient expansion for the construction to converge. This idea is
  also used in the proof of local connectivity of the Julia set of a geometrically finite polynomial; see, for example, \cite[Theorem~4.3]{carlesongamelin}, and \cite{tanlei} which studies the case of a geometrically finite rational map. However, there
  are additional challenges to overcome in the transcendental case.
  In particular, the proofs in the polynomial and rational case 
  obtain uniform expansion arguments near pre-images of parabolic points by relying, in an essential way, 
  upon the fact that the degree of $f$ is finite, and therefore there are only finitely many such points. A new argument 
   is needed for transcendental entire functions. (Compare the remarks in the proof of  Proposition~\ref{prop:metric}.)

Docile functions have many strong dynamical properties, which will be explored in a subsequent paper. Here, 
 we give three simple examples of such properties in our setting. Recall that a \emph{Cantor bouquet} is a certain type of uncountable union of arcs to infinity,
  called \emph{hairs},
   and a \emph{pinched Cantor bouquet} is the quotient of a Cantor bouquet
 by a closed equivalence relation defined on its endpoints, embedded
  in a plane in a way that preserves the cyclic ordering of its hairs at infinity. 
  Compare \cite{semiconjugacies,brushinghairs}. The following result then follows from Corollary~\ref{cor:main} and \cite{brushinghairs}.
\begin{cor}[Pinched Cantor Bouquets]
\label{cor:pinched}
Suppose that $f$ is strongly geometrically finite, and that additionally $f$ is either of finite order, or
  a finite composition of class $\B$ functions of finite order. Then $J(f)$ is a pinched Cantor bouquet.
\end{cor}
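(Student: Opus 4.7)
The plan is to combine Corollary~\ref{cor:main} with the framework of \cite{brushinghairs}, which characterises pinched Cantor bouquets as quotients of Cantor bouquets by closed equivalence relations on endpoints that preserve the cyclic order of hairs at infinity. The overall structure is: first realise $J(g)$ as a Cantor bouquet for a suitable disjoint-type $g$ in the parameter space of $f$; then show that the semiconjugacy $\vartheta$ of Corollary~\ref{cor:main} identifies only endpoints and respects the required order at infinity; and finally quote the characterisation theorem from \cite{brushinghairs} to conclude.

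First I would fix $\lambda\neq 0$ small enough that $g(z)\defeq f(\lambda z)$ is of disjoint type, which is possible since $f\in\B$. The hypothesis that $f$ is either of finite order or a finite composition of finite-order class $\B$ functions is preserved under the affine rescaling $z\mapsto \lambda z$, so $g$ belongs to the same subclass. For disjoint-type functions in this subclass, the known Cantor-bouquet structure theorem (this is precisely what is proved in \cite{brushinghairs}, building on \cite{lassearclike}) gives that $J(g)$ is a Cantor bouquet, with components that are hairs homeomorphic to $[0,\infty)$, escaping along their open parts and having a single non-escaping endpoint.

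Next I would analyse the identifications induced by $\vartheta$. Applying Corollary~\ref{cor:main}, properties~\eqref{mainhomeoonI} and~\eqref{mainhomeooncomponents} together guarantee that $\vartheta$ is injective on each hair and on the union of open hairs $I(g)$, and that $\vartheta^{-1}(I(f))=I(g)$. Hence if $\vartheta(x)=\vartheta(y)$ with $x\neq y$, both $x$ and $y$ must be endpoints of distinct hairs. Since $\vartheta$ is continuous on the compact set $J(g)\cup\{\infty\}$, the resulting equivalence relation on endpoints is closed, and $J(f)$ is homeomorphic to the topological quotient of the Cantor bouquet $J(g)$ by this relation. Property~\eqref{maintoinf}, namely that $\vartheta$ extends continuously to $\infty$ with $\vartheta(\infty)=\infty$, together with the fact that $\vartheta$ is obtained from the natural bijection on escaping sets of~\cite{rigidity,dreadlocks}, yields the preservation of cyclic order of hairs at infinity (this corresponds to the way in which tracts of $f$ and $g$ are canonically matched in the construction of $\theta$).

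The main obstacle I foresee is the last step: verifying cleanly that the embedding of the quotient into the plane coming from $\vartheta$ satisfies the precise cyclic-order condition in the definition of a pinched Cantor bouquet. The hairs at infinity of $J(g)$ are indexed by external addresses, and the canonical bijection between escaping sets of $g$ and $f$ preserves addresses by construction; hence the order in which hairs of $J(g)$ and $J(f)$ tend to $\infty$ agrees. Once this is spelled out, the hypotheses of the relevant characterisation result in \cite{brushinghairs} are all met, and the conclusion that $J(f)$ is a pinched Cantor bouquet follows immediately.
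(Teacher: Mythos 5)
Your argument follows the paper's proof closely: apply Corollary~\ref{cor:main}, invoke \cite{brushinghairs} for the Cantor-bouquet structure of $J(g)$, observe that nontrivial identifications under $\vartheta$ occur only among endpoints (because $\vartheta$ is a bijection on $I(g)$ and non-escaping points are endpoints), and check that the cyclic order of hairs at infinity is preserved. One small slip worth noting: a hair of $J(g)$ need not have a \emph{non-escaping} endpoint---its finite endpoint may itself lie in $I(g)$; the relevant fact (which the paper cites from \cite[Theorem~5.10]{RRRS}) is the converse, that every non-escaping point is an endpoint, and with this correction your argument coincides with the paper's.
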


The second result also follows immediately from Corollary~\ref{cor:main}, together with well-known properties of disjoint-type functions.
\begin{cor}[Components of the escaping set]\label{cor:components}
Suppose that $f$ is strongly geometrically finite. Then $I(f)$ has uncountably many connected components.
\end{cor}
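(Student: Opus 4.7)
The plan is to reduce the statement, via Corollary~\ref{cor:main}, to the corresponding (and already well-understood) property for disjoint-type functions. Since $f$ is strongly geometrically finite, Theorem~\ref{thm:geometricallyfinitedocile} tells us that $f$ is docile, and for $\lambda\neq 0$ chosen small enough that $g(z)\defeq f(\lambda z)$ is of disjoint type, Corollary~\ref{cor:main}\eqref{mainhomeoonI} supplies a homeomorphism $\vartheta\colon I(g)\to I(f)$. Because a homeomorphism induces a bijection between the sets of connected components of its domain and codomain, it therefore suffices to show that $I(g)$ has uncountably many connected components.

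For disjoint-type $g\in\B$ this is classical. By Eremenko--Lyubich one has $I(g)\subseteq J(g)$, and the component structure of $J(g)$ was described in detail in \cite{lassearclike}: $J(g)$ decomposes as an uncountable disjoint union of closed, connected, unbounded pieces (``dreadlocks''), parametrised by the admissible external addresses of $g$, and each such piece contains an unbounded curve of escaping points. In particular every component of $J(g)$ meets $I(g)$. Since each component of $I(g)$ is contained in a unique component of $J(g)$, assigning the latter to the former defines a surjection from the components of $I(g)$ onto the (uncountable) set of components of $J(g)$, and hence $I(g)$ itself has uncountably many components. Transporting this through $\vartheta$ yields the corollary.

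I do not foresee any real obstacle here beyond invoking the two ingredients already in place (Corollary~\ref{cor:main} and the structural results of \cite{lassearclike}). The only mild subtlety is making sure that distinct components of $J(g)$ give rise to distinct components of $I(g)$, but this is immediate from $I(g)\subseteq J(g)$ together with the disjointness of the components of $J(g)$.
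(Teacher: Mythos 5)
Your proposal is correct and matches the paper's argument: both reduce via the homeomorphism $\vartheta\colon I(g)\to I(f)$ from Corollary~\ref{cor:main}\eqref{mainhomeoonI} and then appeal to the structure theory for disjoint-type maps in \cite{lassearclike}. The only minor difference is that you derive the uncountability of the components of $I(g)$ by first counting components of $J(g)$ and noting that each meets $I(g)$, whereas the paper cites the count for $I(g)$ directly (\cite[Proposition~3.10 and Corollary~3.11]{lassearclike}).
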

\begin{remark}
  For $f(z)=2\pi i e^z$, which is subhyperbolic but has an asymptotic value in the Julia set, the escaping set $I(f)$ is connected \cite{jarqueconnected}. 
  It is conjectured that, in general, the escaping set of a {\tef} is either connected or has uncountably many components; see \cite{RS} for a partial result in this direction.
\end{remark}

Our third result concerns the case that the Fatou set of a geometrically finite map is \emph{connected}. 
\begin{theorem}[Geometrically finite maps with connected Fatou set]
\label{theo:connected}
Suppose that $f$ is geometrically finite, and that $F(f)$ is connected and non-empty. Then $f$ is strongly geometrically finite,
   and the function $\vartheta$ in Corollary~\ref{cor:main} is a homeomorphism between
 $J(g)$ and $J(f)$. In particular, if $f$ is either of finite order, or a finite composition of class $\B$ functions of finite order, then $J(f)$ is a Cantor bouquet. 
\end{theorem}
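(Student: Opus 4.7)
The plan is to establish the three conclusions of the theorem in sequence.

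Since $F(f)$ is non-empty and connected, it consists of a single Fatou component $U$, which must be completely invariant under $f$. Standard arguments for transcendental entire functions then give that $U$ is simply connected. Together with geometric finiteness, complete invariance of $U$ forces every singular value to have forward orbit eventually inside $U$, or else to remain permanently in the finite set $J(f) \cap P(f)$; in particular, $S(f) \subset U \cup (J(f) \cap P(f))$.

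Upgrading to strong geometric finiteness requires verifying bounded criticality on $J(f)$. Any critical point $c \in J(f)$ maps to $f(c) \in J(f) \cap S(f) \subset J(f) \cap P(f)$, a finite set. The local degree bound then amounts to showing that among critical points in $J(f)$ mapping to this finite set, the multiplicities are uniformly bounded; this should follow from the class $\mathcal{B}$ structure (critical points lie in $f^{-1}$ of a bounded neighbourhood of $S(f)$) together with the absence of asymptotic values discussed next. To rule out finite asymptotic values in $J(f)$, suppose for contradiction that $a \in J(f)$ is one, and choose $\varepsilon > 0$ so small that $\overline{D(a,\varepsilon)}$ meets $S(f)$ only at $a$. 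Then the corresponding asymptotic tract $T$ is an unbounded simply connected domain on which $f$ is a universal covering onto $D(a,\varepsilon) \setminus \{a\}$. Passing to logarithmic coordinates shows that $U \cap T$ has infinitely many components accumulating at $\infty$ in $T$, producing infinitely many distinct accesses to $a$ from $U$; exploiting this multi-access together with the expansion properties of $f$ (in the spirit of the orbifold-metric argument used for Theorem~\ref{thm:geometricallyfinitedocile}), I would derive a topological contradiction with the simple connectedness of $U$.

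Granting strong geometric finiteness, Corollary~\ref{cor:main} yields a semiconjugacy $\vartheta \colon J(g) \to J(f)$ that is a homeomorphism on each component of $J(g)$. For global injectivity, suppose distinct components $C_1, C_2$ of $J(g)$ share an image point $p \in \vartheta(C_1) \cap \vartheta(C_2)$. Each component of $J(g)$ is an arc to infinity in the disjoint-type model, so each $\vartheta(C_i) \cup \{\infty\}$ is a continuum in the Riemann sphere joining $p$ to $\infty$; their union contains a Jordan curve $\gamma$ through $p$ and $\infty$. By the Jordan curve theorem, $\gamma$ separates the sphere into two open components $V_1, V_2$, and $\gamma \subset J(f) \cup \{\infty\}$. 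Since $F(f)$ is connected, it lies entirely in one of them, say $V_1$, forcing $V_2 \cap \mathbb{C} \subset J(f)$. This contradicts the standard fact that $J(f)$ has empty interior for a transcendental entire function with $F(f) \neq \emptyset$. Hence $\vartheta$ is injective, and being a continuous bijection between compact Hausdorff spaces after adjoining $\infty$, it is a homeomorphism.

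For the final claim, the Julia set of a disjoint-type function of finite order (or a finite composition of class $\mathcal{B}$ functions of finite order) is a Cantor bouquet by \cite{brushinghairs}, and this structure transports to $J(f)$ via $\vartheta$. I anticipate the main obstacle to be the middle step, ruling out finite asymptotic values in $J(f)$: making the ``multi-access produces contradiction'' heuristic rigorous, without circularly invoking Corollary~\ref{cor:main}, requires a careful geometric argument inside the tract $T$.
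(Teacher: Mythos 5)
The proposal follows the same broad outline as the paper (establish strong geometric finiteness, then injectivity of $\vartheta$, then apply the Cantor-bouquet machinery), but two of the three steps have genuine gaps.

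\emph{Strong geometric finiteness.} You correctly observe that $U \defeq F(f)$ is a single completely invariant, simply connected component with $U \cap S(f)$ compact. But then you try to argue piecemeal that (i) no asymptotic values lie in $J(f)$ and (ii) critical multiplicities on $J(f)$ are bounded, and you explicitly concede that step (i) is only a heuristic (``I would derive a topological contradiction\ldots requires a careful geometric argument''). The paper sidesteps the entire issue by citing \cite[Corollary~8.5]{lassedave}: for a simply connected invariant Fatou component $U$ with $U \cap S(f)$ compact, one has $S(f) \subset U$. This gives $J(f) \cap S(f) = \emptyset$ outright; by complete invariance of $J(f)$ there are then no critical points or asymptotic values in $J(f)$ at all, and bounded criticality is vacuous. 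The missing ingredient in your argument is precisely this reference, and without it step (i) is a real gap, not a routine verification.

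\emph{Injectivity of $\vartheta$.} Your Jordan-curve extraction argument is fine when both $\vartheta(C_1)$ and $\vartheta(C_2)$ are genuine arcs from $p$ to $\infty$: one can take a complementary interval of the preimage of $\vartheta(C_1) \cap \vartheta(C_2)$ in $[0,1]$ and splice the two subarcs. However, for a general disjoint-type map $g \in \B$ the components of $J(g)$ are only \emph{arc-like} continua, not necessarily arcs (this is the point of \cite{lassearclike}); they are arcs under the extra finite-order hypothesis, which is exactly what the final sentence of the theorem assumes but not what the middle sentence asserts. So your injectivity argument does not cover the full statement. The paper's route is more robust: it notes that the intersection $\vartheta(C_1) \cap \vartheta(C_2)$ is contained in the image of the non-escaping points, which form a totally disconnected set in each component by \cite[Theorem~2.3]{lassearclike}; since the spherical intersection also contains $\infty$, it has at least two points and is totally disconnected, hence disconnected; by Mullikin's theorem the union of the two continua then separates the sphere, contradicting connectedness of $F(f)$ together with $J(f)$ being nowhere dense. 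This argument needs no arc structure.

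The final Cantor-bouquet deduction is the same in both. In summary, your step (i) would be fixed simply by invoking the known result $S(f) \subset U$, and your step (ii) should be replaced by the Mullikin-style separation argument (or restricted to the arc case); as written, both contain gaps.
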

In \cite{BFR} the uniform expansion property of hyperbolic maps was used to study the Fatou and Julia sets of these functions. 
  The results of that paper included various sufficient conditions for all Fatou components to be bounded, and other sufficient conditions for the Julia set to be locally connected. Using
  the expansion properties for strongly geometrically finite functions obtained in the course of the proof of Theorem~\ref{thm:geometricallyfinitedocile}, 
  we are able to deduce that many of the results of \cite{BFR} can be generalised to this larger class. We give two such results. The first concerns the boundedness of the components of the Fatou set, and is a generalisation of \cite[Theorem 1.2]{BFR}.
\begin{theorem}[Bounded Fatou components]
\label{theo:boundedFatou}
Suppose that $f$ is strongly geometrically finite. Then the following are equivalent:
  \begin{enumerate}[(a)]
    \item every component of $F(f)$ is a bounded Jordan domain;
    \item the map $f$ has no asymptotic values, and every component of $F(f)$ contains at most finitely many critical points.
 \end{enumerate}
\end{theorem}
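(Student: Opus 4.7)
The plan is to adapt \cite[Theorem~1.2]{BFR}, which proves the same equivalence in the hyperbolic case, by replacing the hyperbolic expansion with the modified-orbifold expansion developed in the course of proving Theorem~\ref{thm:geometricallyfinitedocile}.

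The direction (a)$\Rightarrow$(b) is essentially automatic. Critical points of $f$ are discrete in $\mathbb{C}$, so every bounded Jordan domain contains only finitely many of them. Now suppose $f$ has a finite asymptotic value $a$: by strong geometric finiteness no finite asymptotic value lies in $J(f)$, so $a\in F(f)$. Choosing a small disc $D\subset F(f)$ around $a$, the component of $f^{-1}(D)$ corresponding to the asymptotic tract over $a$ is unbounded and contained in some Fatou component, contradicting (a).

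For (b)$\Rightarrow$(a), I would first reduce to periodic Fatou components. Since strongly geometrically finite functions lie in $\B$ with bounded postsingular set, there are no wandering domains, Baker domains, Siegel discs, or Herman rings; every periodic Fatou component $U$ is thus the immediate basin of an attracting or parabolic cycle. The hypotheses in (b) imply that $f^p\colon U\to U$, where $p$ is the period, is proper of finite degree: the absence of asymptotic values rules out transcendental tracts landing in $U$, and the local finiteness of critical points bounds the degree. As in \cite{BFR}, one then shows that the Riemann map of $U$ extends continuously and injectively to $\overline{\mathbb{D}}$, so that $U$ is a bounded Jordan domain. This is a standard prime-end argument: one pulls a neighbourhood of a prime end back under iterates of $f^p$ and shows that the resulting neighbourhoods shrink to a point. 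The required contraction comes from the orbifold metric, modified near each parabolic cycle exactly as in the proof of Theorem~\ref{thm:geometricallyfinitedocile}. This is the main obstacle, since at a parabolic fixed point on $\partial U$ naive expansion estimates degenerate; the modified metric restores uniform contraction along inverse orbits on $J(f)$ and is the whole reason the parabolic case can be treated in parallel with the hyperbolic one. Preperiodic components then follow by a pullback argument: the absence of asymptotic values, together with the local finiteness of the critical set in each Fatou component, ensures that preimages of bounded Jordan domains decompose as finite disjoint unions of bounded Jordan domains. Finally, unboundedness of periodic Fatou components themselves is excluded by combining the asymptotic-tract argument from (a)$\Rightarrow$(b) with the absence of Baker domains under bounded postsingular set.
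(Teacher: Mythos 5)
Your overall strategy matches the paper's: (a)$\Rightarrow$(b) is elementary, and (b)$\Rightarrow$(a) is handled by first treating periodic components and then propagating boundedness to preperiodic ones via proper preimages. The paper packages the periodic case into a separate equivalence result (Theorem~\ref{theo:1.10}) and uses Propositions~\ref{prop:bfr} and~\ref{prop:bfrnew} for the pullback step; you do the same things inline.

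Two issues, however. The first is minor but worth flagging: your final sentence claims that ``unboundedness of periodic Fatou components themselves is excluded by combining the asymptotic-tract argument from (a)$\Rightarrow$(b) with the absence of Baker domains.'' This is both redundant and logically backwards. The asymptotic-tract argument gives asymptotic value $\Rightarrow$ unbounded component, which is the wrong direction for excluding unboundedness; and ``no Baker domains'' does not by itself prevent an attracting or parabolic immediate basin from being unbounded. In fact boundedness of $U$ is a \emph{conclusion} of the prime-end argument in your step on periodic components (as in the paper's Theorem~\ref{theo:1.10}, where (f)$\Rightarrow$(a) yields that $U$ is a bounded Jordan domain at once), so this extra step should simply be deleted.

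The second issue is more substantive: the one-line assertion that ``the required contraction comes from the orbifold metric, modified near each parabolic cycle exactly as in the proof of Theorem~\ref{thm:geometricallyfinitedocile}'' hides the entire technical content of the parabolic case. When $\partial U$ contains a parabolic fixed point $\zeta$, the boundary curves $\partial U_n = \partial \f^{-(n+1)}(D_j')$ all pass through $\zeta$, and the modified metric $\sigma$ is not expanding uniformly; it only gives the summable estimate of Proposition~\ref{prop:nicederivatives}, whose gain $\dpar(\f^k(z))^\ell$ degenerates as orbits approach $\zeta$. The paper's proof therefore constructs a homotopy $\theta\colon S^1\times[0,1]\to\overline{U}$ between $\partial U_0\setminus\{\zeta\}$ and $\partial U_1\setminus\{\zeta\}$ whose fibres have Euclidean length $O(\lvert z_0(x)-\zeta\rvert^{p+1})$, and then uses Remark~\ref{rmk:expansionell} to cancel the $\dpar^\ell$ loss in Proposition~\ref{prop:nicederivatives} against the $\dpar^{p+1-s_\sigma}$ gain in the homotopy length. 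This cancellation is where the exponents $s_\sigma = 1 - \frac{1}{2n_\sigma}$ and $\ell = p+1-s_\sigma$ are genuinely used, and a generic ``pull back a prime-end neighbourhood'' argument does not automatically give summable lengths through the parabolic point. As written, your sketch does not engage with this, so while the plan is right, the crucial quantitative step that makes the parabolic case work is missing.
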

The second result concerns the local connectedness of the Julia set, and generalises \cite[Corollary~1.8]{BFR}.
\begin{theorem}[Bounded degree implies local connectivity]
\label{theo:locallyconnected}
Suppose that $f$ is strongly geometrically finite with no asymptotic values. 
   Suppose, furthermore, that there is a uniform bound on the number of critical points, counting multiplicity, that are contained 
   in any single Fatou component of $f$. Then $J(f)$ is locally connected.
\end{theorem}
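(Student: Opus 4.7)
The plan is to adapt the strategy of \cite[Corollary~1.8]{BFR} from the hyperbolic setting, substituting the expansion estimates established earlier in the paper during the proof of Theorem~\ref{thm:geometricallyfinitedocile} for the expansion used in \cite{BFR}. A first remark is that under the hypotheses given, Theorem~\ref{theo:boundedFatou} applies: every Fatou component of $f$ is a bounded Jordan domain. In particular, the boundary of each Fatou component is already locally connected, and it suffices to establish a uniform shrinking property for inverse branches at points of $J(f)$, from which local connectivity at every finite point of $J(f)$ will follow by a standard Whyburn-type argument.

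More precisely, the key estimate to prove is the following: for every compact set $K\subset\mathbb{C}$ and every $\epsilon>0$ there exists $\delta>0$ such that for every $z\in K\cap J(f)$, every $n\geq 0$ and every connected component $W$ of $f^{-n}(D(z,\delta))$, one has $\operatorname{diam}(W)<\epsilon$. To prove this, I would use the modified orbifold metric constructed in the main proof; this metric is uniformly expanding for $f$ on a neighbourhood of $J(f)$, including near parabolic cycles. The assumption of no asymptotic values combined with a uniform bound on critical multiplicity per Fatou component ensures that $f^n$ restricted to any pullback component $W$ as above has uniformly bounded degree, so Koebe-type distortion applies and converts orbifold contraction into Euclidean contraction. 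Exponential contraction away from the parabolic cycles, together with the controlled (polynomial) contraction near them, then yields the required uniform $\delta$.

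Local connectivity at $\infty$ follows from the docility established in Theorem~\ref{thm:geometricallyfinitedocile}, since the continuous extension $\vartheta\colon J(g)\cup\{\infty\}\to J(f)\cup\{\infty\}$ pushes forward the corresponding statement for the disjoint-type function $g$, whose tracts near infinity are well controlled. The main obstacle in the whole programme is the analysis of inverse branches that accumulate near a parabolic cycle, where $f$ is not expanding in any smooth metric and the orbifold metric has been modified in a delicate way. One needs to show that the long sojourns of orbits in the parabolic petals are compensated by the strict expansion enjoyed outside a fixed neighbourhood of the cycle, so that the cumulative contraction of inverse branches still forces $\operatorname{diam}(W)\to 0$. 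This is essentially the same technical issue that drives the proof of Theorem~\ref{thm:geometricallyfinitedocile}, and it is this step, rather than the local-connectivity conclusion itself, that represents the hardest part.
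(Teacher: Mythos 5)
Your overall strategy — use Theorem~\ref{theo:boundedFatou} to get bounded Jordan Fatou components, then Whyburn, then the $\sigma$-expansion estimates — is the right skeleton and matches the paper's. However, the way you propose to execute the second half diverges from the paper in a way that leaves genuine gaps.

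The paper verifies the Whyburn criterion for $J(f)\cup\{\infty\}$ as a compact subset of the sphere: (a) the boundary of every complementary component is locally connected (this follows from Theorem~\ref{theo:boundedFatou}), and (b) for each $r>0$ only finitely many Fatou components have spherical diameter greater than $r$. Condition~(b) is the real content, and the paper establishes it by controlling pullbacks of \emph{Fatou components}: it covers $\partial V$, for each periodic basin $V$, by finitely many carefully chosen sets $U_z$ and shows via Proposition~\ref{prop:nicederivatives} that the $\sigma$-diameters of the components of $f^{-n}(U_z)$, and hence of $f^{-n}(V)$, tend to zero uniformly. The degree bound needed here is for $f^n\colon V_n\to V$ where $V_n$ is a Fatou component, and this is precisely where the hypothesis ``uniform bound $N$ on critical points per Fatou component'' enters: only finitely many ($t$, say) components meet $S(f)$, and Riemann--Hurwitz then gives $\deg(f^n|_{V_n})\le (N+1)^t$.

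Your proposal instead asserts a uniform shrinking estimate for pullback components of \emph{round discs} $D(z,\delta)$ centred at arbitrary points of $J(f)$, and claims the degree of $f^n$ on such components is uniformly bounded ``by the no-asymptotic-values plus bounded-criticality-per-component hypothesis.'' Two concrete problems. First, that attribution is off: the per-Fatou-component bound controls the degree of $f^n$ on pullbacks of Fatou components, not on pullbacks of discs straddling $J(f)$; what actually limits the degree near a Julia point is bounded criticality on $J(f)$ together with finiteness of $P(f)\cap J(f)$ (and even then one must argue that critical orbits in $J(f)$ eventually land on repelling cycles to prevent the local degree of $f^n$ from growing in $n$). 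Second, round discs simply don't work near a parabolic point $\zeta$: any small disc $D(\zeta,\delta)$ contains part of the absorbing petals $D'$ and hence is not contained in the surface $S$ on which $\sigma$ is defined; nor is there expansion on the attracting side. This is exactly why the paper replaces the disc around $\zeta$ by the union of thin \emph{repelling} sectors $U_\zeta$, where Proposition~\ref{prop:petal} gives the requisite derivative growth. Finally, even granting your shrinking estimate, the passage to Whyburn condition~(b) — and hence to local connectivity, including at $\infty$, without any appeal to docility — still requires the covering/degree argument on Fatou components; you have not made that step, and the docility detour is neither needed nor the route the paper takes.

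In short, the high-level plan is sound, but the core technical step (uniform shrinking of pullbacks of plain discs with a uniform degree bound) is not proved, would fail as stated near parabolic fixed points, and even if repaired would still require the Fatou-component degree bound to close the Whyburn argument.
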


As in \cite[Corollary 1.9(a)]{BFR}, when each Fatou component contains at most one critical value,
   Theorem~\ref{theo:locallyconnected} implies the following. 
\begin{cor}[Locally connected Julia sets]
\label{cor:niceone}
Suppose that $f$ is strongly geometrically finite with no asymptotic values, and that every component of $F(f)$ contains at most one critical value. 
  Suppose additionally that the multiplicity of the critical points of $f$ is uniformly bounded. Then $J(f)$ is locally connected.
\end{cor}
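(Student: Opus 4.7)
The plan is to deduce Corollary~\ref{cor:niceone} from Theorem~\ref{theo:locallyconnected} by verifying the latter's main hypothesis: a uniform bound on the number of critical points, counted with multiplicity, in any single Fatou component of $f$. Let $M$ be the uniform bound on the local degree of $f$ at its critical points, so that $\deg_c f \leq M$ whenever $c$ is a critical point of $f$.

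I would first establish that for any Fatou component $V$ of $f$, the restriction $f|_V \colon V \to f(V)$ is a proper holomorphic map between simply connected domains. Simple connectivity holds because $f \in \B$: by Baker's theorem any multiply connected Fatou component of an entire function is a wandering domain whose iterates escape to infinity, which is incompatible with the Eremenko--Lyubich identity $I(f) \cap F(f) = \emptyset$ valid for class $\B$. Properness is a consequence of the absence of asymptotic values: any sequence $(z_n)$ in $V$ leaving compact subsets of $V$ either accumulates on $\partial V\subseteq J(f)$, in which case $f(z_n)$ accumulates in $J(f)\setminus f(V)$, or tends to $\infty$, in which case any limits of $f(z_n)$ in $\mathbb{C}$ would yield asymptotic values via path-connectedness of $V$. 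In either case, $f(z_n)$ cannot be trapped in a compact subset of $f(V)$, so $f|_V$ is proper of some finite degree $d=d(V)$.

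Next I apply Riemann--Hurwitz. Since $V$ and $f(V)$ are topological discs,
\[
  \sum_{\substack{c\in V\\ f'(c)=0}} \bigl(\deg_c f - 1\bigr) \;=\; d-1.
\]
By hypothesis, $f(V)$ contains at most one critical value $v$, and every critical point of $f$ in $V$ therefore maps to $v$. Letting $k$ denote the number of critical points of $f$ in $V$, the identity above rearranges as $\sum_{c}\deg_c f = d-1+k$; comparing with the preimage identity $\sum_{p\in f^{-1}(v)\cap V}\deg_p f=d$ shows that the non-critical preimages of $v$ in $V$ number exactly $1-k$. Non-negativity forces $k\leq 1$. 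If $k=1$, the unique critical point $c\in V$ satisfies $\deg_c f = d \leq M$; if $k=0$, then $d=1$ and $f|_V$ is a conformal isomorphism. Either way, the total ramification in $V$ is at most $M-1$, uniformly in $V$, and Theorem~\ref{theo:locallyconnected} applies to yield local connectivity of $J(f)$.

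The main subtlety is the properness of $f|_V$, which uses the no-asymptotic-values hypothesis essentially and is precisely why that assumption appears in the statement; the rest is a direct Riemann--Hurwitz computation, parallel to the deduction of \cite[Corollary~1.9(a)]{BFR} from \cite[Corollary~1.8]{BFR}.
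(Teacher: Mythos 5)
Your overall strategy — reduce to Theorem~\ref{theo:locallyconnected} by bounding the total ramification in any single Fatou component — is the right one and is exactly what the paper has in mind (the paper just points to the analogous deduction in \cite{BFR}). The Riemann--Hurwitz computation is also fine \emph{given} properness. The problem is the properness step.

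You assert that if $z_n\to\infty$ inside a Fatou component $V$ and $f(z_n)\to w\in\C$, then $w$ must be an asymptotic value, ``via path-connectedness of $V$''. This is not a valid deduction. A sequence escaping to infinity along which $f$ has a finite limit does not, by itself, produce an asymptotic path: the natural curve you would draw through the $z_n$ in $V$ can have $f$ oscillating wildly between successive $z_n$, and there is no general principle that lets you repair it. Concretely, if $V$ happened to contain infinitely many critical points $c_n\to\infty$ all mapping to the single critical value $v$, you could take $z_n=c_n$ and $f(z_n)\equiv v$, yet $v$ need not be asymptotic. The correct dichotomy is precisely the content of Proposition~\ref{prop:bfr}: if $f\colon V\to W$ is not proper, then $V$ contains \emph{either} an asymptotic curve \emph{or} infinitely many critical points. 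Your argument silently discards the second alternative, which is exactly the alternative that Riemann--Hurwitz cannot handle — so the reasoning is circular: you need ``finitely many critical points'' to conclude properness, but you are using properness to conclude ``at most one critical point''.

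The clean fix is to drop the properness claim and the Riemann--Hurwitz step altogether and invoke the final sentence of Proposition~\ref{prop:bfr} directly. With no asymptotic values, $S(f)=\overline{\CV(f)}$, and since a Fatou component $W$ containing more than one point of $S(f)$ would contain infinitely many critical values (any non-critical-value point of $S(f)\cap W$ is a limit of critical values, which eventually lie in the open set $W$), the hypothesis forces $W\cap S(f)$ to be empty or a singleton. Applying Proposition~\ref{prop:bfr} with $\Delta=W$ and $\tilde\Delta=V$ a component of $f^{-1}(W)$ then gives that $V$ contains at most one critical point, of local degree at most $M$, so the critical-point count with multiplicity in $V$ is at most $M-1$. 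Now Theorem~\ref{theo:locallyconnected} applies.
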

The function $f(z) \defeq \sin z$ has a triple fixed point at the origin, and each of the two immediate parabolic basins contains 
 one of the two critical values $\pm 1$. Hence Corollary~\ref{cor:niceone} gives a positive answer to 
  Osborne's question \cite[remark following Example~6.2]{Osborne} whether $J(f)$ is locally connected. 
  See Example~\ref{ex:sine} and Figure~\ref{fig:julia}\subref{fig:julia_f2} below.
\begin{remark}
It is known that if $f$ is a transcendental entire function and $J(f)$ is locally connected, then $J(f)$ has the additional topological property of being a \emph{spider's web}; see \cite[Theorem 1.4]{Osborne}, and see \cite{fast} for the definition of a spider's web. In particular, all Fatou components of $f$ are bounded.
\end{remark}
Finally, we discuss some further consequences of Theorem~\ref{thm:geometricallyfinitedocile}. The \emph{exponential family}
 consists of the maps 
\begin{equation}
\label{eq:exdef}
E_a(z) \defeq ae^z, \qfor a \in \C\setminus\{0\}.
\end{equation}
The strongly geometrically finite exponential maps are exactly those
that have either an attracting or a parabolic orbit. For this setting, Corollary~\ref{cor:main} was stated in \cite{topescaping},
but the proof was given only for the case of attracting orbits. 

The simplest geometrically finite map in this family that is not also subhyperbolic is $f(z) \defeq E_{1/e}(z) = e^{z-1}$, which has a parabolic fixed point at $1$ and connected Fatou set; see Example~\ref{ex:exp} below. For this function, it was observed by
Devaney and Krych \cite{DevandKrych} that the Julia set is an uncountable collection of arcs to infinity.
Furthermore, Aarts and Oversteegen \cite{AandO} proved that the Julia set of $f$ is a Cantor bouquet. It follows
from Theorem~\ref{theo:connected} that, indeed, $f|_{J(f)}$ is topologically conjugate to any disjoint type exponential map
on its Julia set, for example to $g(z) = e^{z-2}$. 

%
%

These results are related to the following more general phenomenon. Suppose that $f$ is a polynomial, 
or even a rational function, that is 
geometrically finite. Then it is possible to perturb $f$ such that all 
parabolic orbits become attracting, without changing the 
topological dynamics on the Julia set. More precisely, there is a path $\gamma$ in the space of geometrically
finite rational functions
of the same degree, ending at $f$, such that each $g\in \gamma$ is conjugate to $f|_{J(f)}$ on its
Julia set, and has no parabolic cycles. Furthermore, the conjugacy depends continuously
on $g$ and tends to the identity as $g$ tends to $f$ along $\gamma$. See \cite{cuilei}, \cite{haissinsky1,haissinsky2,haissinsky3} and \cite{kawahira,kawahira1} for more details.

It seems likely that, using our techniques, these results can be extended to the case of strongly geometrically finite
functions. 
Here, we restrict ourselves to noting a direct consequence of Theorem~\ref{thm:geometricallyfinitedocile}. 
Suppose that $f_1$ and $f_2$ are two strongly geometrically finite
functions in the same quasiconformal equivalence class, and suppose that $g_1$ and $g_2$ are disjoint-type functions with
$g_1(z) = f_1(\lambda_1 z)$ and $g_2(z) = f_2(\lambda_2 z)$, for some $\lambda_1, \lambda_2 \ne 0$. Let 
  $\theta_1$ be the semiconjugacy between $g_1$ and $f_1$ from Corollary~\ref{cor:main}. 
  Note that $g_1$ and $g_2$ are quasiconformally equivalent
  and hence, by \cite[Theorem 3.1]{rigidity}, quasiconformally conjugate. So there is also a semiconjugacy
  $\theta_2$ between $g_1$ and $f_2$. Suppose that we can additionally conclude, say
  by combinatorial means, that the identifications between non-escaping points of $J(g_1)$ induced
  by $\theta_1$ are the same as those induced by $\theta_2$. Then $f_1$ and $f_2$ are are conjugate on their Julia sets, since
   they are, topologically, the quotient of the same map, by the same equivalence relation. 

In particular, using prior
results from \cite{expcombinatorics}, 
we can conclude the following for the case of exponential maps
  $E_a$ as defined in~\eqref{eq:exdef}. A \emph{hyperbolic
component} is a connected component of the set of parameters $a\in\C$ 
 such that $E_a$ is hyperbolic. The \emph{hairs} or \emph{external rays} of an
  exponential map $E_a$
  are the path-connected
components of the escaping set $I(E_a)$; these hairs are ordered ``vertically'' 
according to how they tend to infinity; compare~\cite[Section~2]{expcombinatorics}.

\begin{theorem}[Conjugate exponential maps]
\label{theo:exponential}
Suppose that $a_0$ is a parameter such that $E_{a_0}$ has a parabolic cycle. Then there exists a unique hyperbolic component $W$ such that, for $a\in W$, the functions $E_a$ and $E_{a_0}$ are topologically conjugate 
   when restricted to their respective Julia sets, by a conjugacy that respects the 
   vertical order of hairs.
   Moreover, $a_0\in\partial W$. 
\end{theorem}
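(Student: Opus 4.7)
The plan is to reduce the theorem to a combinatorial statement about the exponential family, combining Theorem~\ref{thm:geometricallyfinitedocile} with the combinatorial classification in \cite{expcombinatorics}, along the lines indicated in the paragraph preceding the theorem statement.

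First, one verifies that $E_{a_0}$ is strongly geometrically finite: its only singular value is the asymptotic value $0$, which lies in the parabolic basin, so $F(E_{a_0}) \cap S(E_{a_0}) = \{0\}$ is compact, $J(E_{a_0}) \cap P(E_{a_0})$ is the finite parabolic cycle, and there are no asymptotic values and no critical points in the Julia set. The same verification applies to every hyperbolic exponential map. Pick $\lambda_0 \ne 0$ small enough that $g_0(z) \defeq E_{a_0}(\lambda_0 z)$ is of disjoint type, and let $\theta_0 \colon J(g_0) \to J(E_{a_0})$ be the semiconjugacy provided by Corollary~\ref{cor:main}, which preserves the vertical order of hairs. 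The target hyperbolic component $W$ is then identified using \cite{expcombinatorics}: the parabolic cycle of $E_{a_0}$ determines a periodic intermediate external address, which is realised by a unique hyperbolic component $W$ whose attracting cycle carries the same combinatorial data, and the theory of parameter rays in \cite{expcombinatorics} places $a_0$ on $\partial W$.

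For $a \in W$, choose $\lambda_a \ne 0$ so that $g_a(z) \defeq E_a(\lambda_a z)$ is of disjoint type. Since any two disjoint-type exponential maps are quasiconformally equivalent and hence quasiconformally conjugate on their Julia sets by \cite[Theorem~3.1]{rigidity}, one may transport the corresponding semiconjugacy $\theta_a$ to a map $\tilde{\theta}_a \colon J(g_0) \to J(E_a)$ that also respects the vertical order of hairs. Applying the principle explained in the paragraph preceding the theorem, both $E_{a_0}|_{J(E_{a_0})}$ and $E_a|_{J(E_a)}$ then appear as quotients of $g_0|_{J(g_0)}$ by the closed equivalence relations induced by $\theta_0$ and $\tilde{\theta}_a$, respectively, and these equivalence relations can be described purely in terms of the combinatorial data shared by $E_{a_0}$ and $E_a$ via \cite{expcombinatorics}. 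Coincidence of the two relations yields a topological conjugacy $J(E_{a_0}) \to J(E_a)$ preserving the vertical order, and uniqueness of $W$ follows from the injectivity of the assignment of combinatorial data to hyperbolic components in the exponential family.

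The main obstacle is the combinatorial comparison step: one must verify that the identifications which $\theta_0$ imposes on $J(g_0)$, arising from the parabolic pinching at the cycle of $E_{a_0}$, agree with those imposed by $\tilde{\theta}_a$, arising from the attracting cycle of $E_a$. This is precisely the content of the classification of hyperbolic and parabolic exponential parameters by intermediate external addresses in \cite{expcombinatorics}, together with the description of the associated pinching relations. Once this is settled, the remainder of the argument is a formal consequence of Theorem~\ref{thm:geometricallyfinitedocile} and standard facts about disjoint-type exponential maps.
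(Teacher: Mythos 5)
Your overall strategy matches the paper's: identify the candidate hyperbolic component via intermediate external addresses, use Theorem~\ref{thm:geometricallyfinitedocile} to extend the bijection of escaping sets to conjugacies, and compare the induced pinching relations. However, there is a real gap where you locate the ``main obstacle.'' You write that the combinatorial comparison---verifying that the identifications induced by $\theta_0$ (parabolic pinching) coincide with those induced by $\tilde{\theta}_a$ (attracting pinching)---``is precisely the content of the classification\ldots in \cite{expcombinatorics}, together with the description of the associated pinching relations.'' That is not so. What is available in the literature is the landing characterisation for \emph{attracting} (hyperbolic) exponential maps: two hairs land together if and only if their external addresses have the same itinerary with respect to $\addr(\kappa)$, which is \cite[Proposition~9.2]{topescaping}. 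The analogous statement for \emph{parabolic} exponential maps is not in \cite{expcombinatorics} or \cite{topescaping}; it is exactly the new ingredient that the present paper must supply. The paper obtains it by observing that the proof of \cite[Proposition~9.2]{topescaping} uses contraction with respect to a hyperbolic metric, and that one can replace that metric with the modified metric $\sigma$ and invoke Proposition~\ref{prop:nicederivatives} to make the same argument go through in the presence of a parabolic cycle. Without this extension, the two pinching relations cannot be matched, and the conjugacy does not follow.

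Two smaller points. First, the paper switches to the parameterisation $f_\kappa(z)=e^z+\kappa$, since the theory of intermediate external addresses and parameter rays in \cite{expcombinatorics,attractingexp} lives in the $\kappa$-plane; your proposal stays in the $a$-plane, which obscures the fact that the map $\kappa\mapsto a=e^\kappa$ is a covering, and this matters for uniqueness. Second, your uniqueness argument (``injectivity of the assignment of combinatorial data to hyperbolic components'') skips the step of showing that a topological conjugacy on Julia sets which respects the vertical order actually forces equality of intermediate addresses. The paper's proof handles this via \cite[Proof of Corollary~8.4]{topescaping}, which produces a translate $\kappa'=\kappa+2\pi i k$ for which the natural bijection $\theta_{\kappa_0\to\kappa'}$ extends to a conjugacy, from which equality of characteristic addresses and hence of $\addr$ follows by \cite[Lemma~3.3, Theorem~3.4, Lemma~3.10]{expcombinatorics}; passing back to the $a$-plane then uses $e^\kappa=e^{\kappa'}$. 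You should make both of these steps explicit.
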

\subsection*{Structure} 
We now discuss the structure of this paper, including a brief outline of the proof of Theorem~\ref{thm:geometricallyfinitedocile}, which is long and quite complicated. We begin by 
 discussing docility, and the natural bijection between escaping sets, in more
 detail in Section~\ref{sec:docility}. In particular, Proposition~\ref{prop:docilesemiconjugacy} 
 establishes a number of important properties of the semiconjugacy for docile functions in general, and thus reduces 
  the proof of Corollary~\ref{cor:main} 
 to the proof of Theorem~\ref{thm:geometricallyfinitedocile}. The discussion also sets
 the stage for the proof of Theorem~\ref{thm:geometricallyfinitedocile}.
 
  Next, we present some preliminaries for the construction of an 
    expanding metric. Section~\ref{S.parabolic} discusses the dynamics of an 
     entire function near a parabolic point; 
     Section~\ref{S.orbifolds} gives an introduction to Riemann surface 
     orbifolds.  We collect basic
     results concerning geometrically finite functions in Section~\ref{S.Results}. 

The proof of Theorem~\ref{thm:geometricallyfinitedocile} proceeds, very roughly, as follows. Suppose that $f$ is strongly geometrically finite. 
 In order to obtain expansion estimates in the repelling directions of parabolic periodic points, we must work with an iterate $\f$ of $f$. Specifically, we choose the smallest value $\n \in \N$ such that all parabolic points of the map $\f \defeq f^{\n}$ are fixed and of multiplier one. Note that
$f$ and $\f$ have the same Julia set, Fatou set and parabolic periodic
points; we shall use these facts without comment. It is not difficult to verify that $\f$ is also strongly geometrically finite.
  
 In the polynomial case, the analogous property to docility (continuous extension of the Riemann map)
   can be expressed in terms of a topological property of the Julia set (local connectivity), and therefore it holds for $f$ if and
   only it holds for any iterate. For transcendental entire functions, it is less straightforward to see that
    docility of $\f$ implies docility of $f$.  Hence we instead prove docility of the original function $f$ directly,
     but using expansion extimates only known to hold for the iterate $\f$. This is another subtle difference between the proofs in the 
     polynomial and transcendental cases.

More precisely, 
 in Section~\ref{S.constructO} we construct certain hyperbolic orbifolds $\tilde{\Orb}$, 
${\Orb'}$ and ${\Orb}$ such that the maps $\f \colon \tilde{\Orb} \to \Orb$ and 
$f \colon \Orb' \to \Orb$ are orbifold covering maps. This construction is closely related to
 that in \cite{semiconjugacies}. In Section~\ref{S.metric} we modify the orbifold
  metric on $\tilde{\Orb}$ close to the parabolic fixed points of $\f$. The modification itself
  is modelled on that used in the rational case, but a new argument is required in order to show that $\f$ expands
   this metric uniformly
  away from parabolic points. In 
  Section~\ref{S.expanding}, we combine the results of the previous section with known estimates near parabolic points, in order to obtain a suitable
  global estimate. 
  In Section~\ref{S.conjugacy}, we use this expansion to prove 
  Theorem~\ref{thm:geometricallyfinitedocile}. We also give the proofs of 
   Theorem~\ref{theo:connected} and Theorem~\ref{theo:exponential}.

In Section~\ref{S.others} we use the expansion properties of $\f$ to prove the remaining results outlined above. 
 Finally, Section~\ref{S.examples} discusses a number of examples.
\subsection*{Notation and terminology} 
We denote the Riemann sphere by $\Ch$, and the unit disc centred at the origin by $\D$. 
The (Euclidean) open ball of radius $r>0$ around a point $a\in\C$ is denoted by 
\[ 
B(a, r) \defeq \{ z : |z - a| < r\}. 
\]

Unless stated otherwise, topological operations such as closure are taken in the plane. If $A \subset B \subset \C$, then we write $A \Subset B$ if the closure of $A$ is compact and contained in $B$. We denote the Euclidean distance between two sets $A, B \subset \C$ by dist$(A, B)$. When $A$ is the singleton $A = \{ a \}$ we just write dist$(a, B)$.

We refer to \cite{walteriteration,schleichersurvey} for background on transcendental
  dynamics. Also see
\cite{davesurvey} for background on the Eremenko-Lyubich class $\B$, as well as for
 elementary definitions and properties used in this paper, which we omit for reasons of brevity.  
  
Suppose that $f$ is a {\tef}. We denote by $\Attr(f)$ and $\Par(f)$ 
the set of attracting and parabolic periodic points, respectively. We let 
$\AbsO(f)\subset F(f)$ denote the union of attracting Fatou components; i.e., the set of points whose orbit converges to an attracting periodic orbit. The set $\ParO(f)$ is defined analogously.
For $z \in \C$ we define $\dpar(z) \defeq \operatorname{dist}(z, \Par(f))$.
%
%
%

\section{Docile functions}
\label{sec:docility}
  Let $f$ be a transcendental entire function with bounded postsingular set $P(f)$, and let $\lambda>0$ be sufficiently small to ensure that
   $g\colon z\mapsto f(\lambda z)$ is of disjoint type. As mentioned in the introduction, there is a natural bijection $\theta\colon I(g)\to I(f)$. This bijection arises from 
   a certain conjugacy between $g$ and $f$ on the set of points whose orbits remain close to infinity, as constructed in \cite{rigidity}. The existence of this
   bijection is implicit in \cite[Proof of Theorem~3.5]{eremenkoproperty}, but is first made explicit in~\cite[Proof of Theorem~7.2]{dreadlocks}. Here, we give a slightly different
   description, for suitably small $\lambda$, which avoids ``external addresses'', which are used in~\cite{dreadlocks}. The construction will also allow us to explain the strategy of the proof of
  Theorem~\ref{thm:geometricallyfinitedocile}.

We begin by choosing a value of $\lambda$. Choose $L>K > 0$ sufficiently large that 
  \begin{equation}\label{eqn:KL}
     P(f)\subset B(0,K)\qquad\text{and}\qquad  B(0,L)\supset f(\overline{B(0,K)}).
  \end{equation}
Set $\lambda \defeq K/L$, and consider $g$ defined as above. Then $S(g) = S(f)\subset P(f)\subset B(0,K)$. Furthermore, 
  \[ g( \overline{B(0,L)}) = f(\overline{B(0,K)}) \subset  B(0,L). \]
  Hence $\overline{B(0,L)}$ is contained in the immediate basin of an attracting fixed point of $g$, and $g$ is of disjoint type; compare also  
   \cite[Proposition 2.8]{semiconjugacies}. 

Next we define
\[
\mathcal{T}^k \defeq f^{-k}(\C \setminus \overline{B(0, L)}) \qquad\text{and}\qquad \mathcal{U}^k \defeq g^{-k}(\C \setminus \overline{B(0, L)}), \qfor k \geq 0.
\]

By construction $\mathcal{T}^k\cap P(f)\neq \emptyset$ for all $k\geq 0$, and 
 $\overline{\mathcal{U}^{k+1}} \subset \mathcal{U}^k$, for $k\geq 0$. Since every point in the Fatou set $F(g)$ eventually iterates into 
  $B(0,L)$, we deduce that
\[
J(g) = \bigcap_{k \geq 0} \mathcal{U}^k.
\]

We now construct a sequence $(\vartheta^k)_{k \geq 0}$ of conformal isomorphisms
\begin{equation}\label{phi1}
\vartheta^k \colon \mathcal{U}^{k} \to \mathcal{T}^{k}, \qfor k \geq 0
\end{equation}
such that 
\begin{equation}\label{phi2}
f \circ \vartheta^{k} = \vartheta^{k-1} \circ g, \qfor k \geq 1.
\end{equation}
We first set $\vartheta^0(z) \defeq z$ and $\vartheta^1(z) \defeq \lambda z$, and note that \eqref{phi1} and \eqref{phi2} both hold with $k=1$. Also observe that $\vartheta^1$ is defined on $\mathcal{U}^0$ and takes values in 
 $\C\setminus \overline{B(0,K)}\subset W\defeq \C\setminus P(f)$ there. Consider the isotopy 
    \[ \vartheta^t\colon \mathcal{U}^0\to W; \quad \vartheta^t(z) \defeq ( 1 - (1-\lambda)t) z \qquad (0\leq t \leq 1)\] 
    between $\vartheta^0$ and $\vartheta^1$ on $\mathcal{U}^0$. 
   Since 
  $f\colon f^{-1}(W)\to W$ is a covering map, $(\vartheta^t\circ g)_{t\in [0,1]}$ lifts to an isotopy 
   $(\vartheta^t)_{t\in[1,2]}$ on $\mathcal{U}^1$ 
   between $\vartheta^1$ and a map $\vartheta^2\colon\mathcal{U}^1\to W$. The restriction of $\vartheta^2$ to
   $\mathcal{U}^2$ takes values in $\mathcal{T}^2$ and satisfies~\eqref{phi2} by construction. 
   We continue this procedure inductively to define $\vartheta^k$ for all $k$. 
     Observe that each $\vartheta^k$ extends to a homeomorphism 
         \[ \vartheta^k \colon \overline{\mathcal{U}^k} \cup\{\infty\} \to \overline{\mathcal{T}^k}\cup \{\infty\} \]
     with $\vartheta^k(\infty)=\infty$. 
     
 Let $U^k$ be a connected component of $\mathcal{U}^k$. Then, for $k'<k$, 
   there is a unique connected component $U^{k'}$ of $\mathcal{U}^{k'}$ such that
   $U^k\subset U^{k'}$. Consider the connected components
   $T^j\defeq \theta^j(U^j)$ of $\mathcal{T}^j$, for $j=k,k'$. It follows from the construction that 
   all sufficiently large points of $T^{k}$ also lie in $T^{k'}$. We say that 
   $T^k$ \emph{tends to $\infty$ in} $T^{k'}$. If $k'\geq 2$, then $f|_{T^{k'}}$ is injective, and hence 
   $T^{k}$ is the unique connected component of $f^{-1}(f(T^k))$ that tends to infinity
   within $T^{k'}$. 

The following is a consequence of \cite[Section~3]{rigidity} and 
  \cite[Proposition~4.4]{dreadlocks}. 
  Let us denote 
   by $J_{\geq R}(f)$ the set of all points of $J(f)$ whose orbits do not enter the 
   disc $B(0,R)$. 
\begin{thm}[Conjugacy near infinity]\label{thm:boettcher}
  There is $R\geq L$ such that 
    the restrictions 
    $\vartheta^k|_{J_{\geq R} (g)\cup\{\infty\}}$ converge uniformly in the spherical metric to
     a function
     \[ \theta\colon J_{\geq R}(g) \cup \{\infty\} \to J(f) \cup \{\infty\}, \]
    which is a homeomorphism onto its image, and satisfies 
       $\theta \circ g = f\circ \theta$ on $J_{\geq R}(g)$. 
    
   For sufficiently large $Q$, 
       $J_{\geq Q}(f)\subset \theta(J_{\geq R}(g))$. Moreover, let $z\in J_{\geq R}(g)$ and denote the connected component of
      $\mathcal{T}^k$ containing $\theta(z)$ by $T^k$. 
      Then, for $k>0$, $T^k$ tends to infinity within $T^{k-1}$, and the unbounded connected component of $T^{k}\cap T^{k-1}$
      contains $\theta(z)$, as well as $\theta^j(z)$ for $j\geq k$. 
\end{thm}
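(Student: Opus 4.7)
The plan is to exploit the strong hyperbolic contraction enjoyed by inverse branches of class-$\B$ functions near infinity. First, I would invoke the Eremenko-Lyubich expansion estimate in the form used throughout \cite{eremenkolyubich, rigidity}: on the hyperbolic surface $W = \C \setminus P(f)$, any single-valued branch of $f^{-1}$ whose image lies in a fixed component of $f^{-1}(W)$ is a strict contraction of the hyperbolic metric $d_W$, and the contraction factor at $z$ tends to $0$ uniformly as $|z|\to\infty$. Enlarging $L$ (hence $R$) if necessary, we may fix $R\geq L$ such that this contraction factor is bounded above by some constant $c<1$ throughout $\{|z|\geq R\}$.

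Next, I would unwind the isotopy-lifting definition of the $\vartheta^k$ to obtain a workable formula. For $z \in J_{\geq R}(g)$ we have $|g^j(z)|\geq R$ for every $j\geq 0$, and the construction yields $\vartheta^k(z) = \varphi(g^k(z))$ for a specific branch $\varphi$ of $f^{-k}$. Because $g(w) = f(\lambda w)$, the recursion $f\circ\vartheta^{k+1} = \vartheta^k\circ g$ together with the uniqueness of lifts forces
\[
\vartheta^{k+1}(z) = \varphi(\lambda\,g^k(z))
\]
with the \emph{same} branch $\varphi$. Applying the contraction estimate $k$ times gives
\[
d_W\bigl(\vartheta^k(z),\vartheta^{k+1}(z)\bigr) \leq c^k\,d_W\bigl(g^k(z),\lambda g^k(z)\bigr).
\]
Since $W$ contains a punctured neighbourhood of $\infty$, its hyperbolic density there is comparable to $1/(|w|\log|w|)$, so a direct estimate yields a uniform bound $d_W(w,\lambda w)\leq M$ for all $|w|\geq R$. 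Hence the telescoping series of hyperbolic distances is dominated by $M\sum_k c^k<\infty$, and $(\vartheta^k)$ is uniformly Cauchy on $J_{\geq R}(g)$ with continuous limit $\theta$. Uniform spherical convergence on $J_{\geq R}(g)\cup\{\infty\}$ follows from $\vartheta^k(\infty)=\infty$ and the fact that $|\vartheta^k(z)|\to\infty$ uniformly in $k$ as $|z|\to\infty$, a consequence of the tract geometry.

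From here, I would obtain $\theta\circ g = f\circ\theta$ by passing to the limit in $f\circ\vartheta^{k+1}=\vartheta^k\circ g$, and handle injectivity of $\theta$ together with the surjectivity statement $J_{\geq Q}(f)\subset\theta(J_{\geq R}(g))$ for large $Q$ by running the symmetric construction with the roles of $f$ and $g$ interchanged: for $w\in J_{\geq Q}(f)$, the iterates $\psi^k(w)\defeq (g^k)^{-1}\circ f^k(w)$ are well defined using coherent branches of $g^{-k}$ (these exist because $g$ is of disjoint type and all its singular values lie well inside the Fatou set) and are Cauchy by the analogous contraction, with limit the continuous inverse of $\theta$ on $J_{\geq Q}(f)$. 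The component-tracking statement is then essentially a formal consequence of the construction: each $\vartheta^j$ is a conformal isomorphism $\mathcal{U}^j\to\mathcal{T}^j$ fixing $\infty$, so the sequence $(\vartheta^j(z))_{j\geq k}$ lies in a single component of $\mathcal{T}^k$, which contains $\theta(z)$ by continuity, and the relationship with $T^{k-1}$ is exactly the tract-nesting structure discussed just before the theorem statement.

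The principal obstacle I expect is making the branch-coherence claim fully rigorous: namely, that the very same branch of $f^{-k}$ witnesses both $\vartheta^k(z)=f^{-k}(g^k(z))$ and $\vartheta^{k+1}(z)=f^{-k}(\lambda g^k(z))$. Morally this is immediate from the simple connectedness of the defining isotopy and the covering property of $f\colon f^{-1}(W)\to W$, but its precise formalisation requires tracking inverse branches along each lifting step and underpins the entire contraction argument. A second, milder, subtlety is verifying that the constant $M = M(R)$ bounding $d_W(w,\lambda w)$ is indeed uniform in $w$ (and ideally shrinks as $R\to\infty$), which requires an elementary but careful computation in the hyperbolic metric of $W$ near $\infty$.
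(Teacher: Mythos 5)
The paper does not prove this theorem; it records it as a consequence of \cite[Section~3]{rigidity} and \cite[Proposition~4.4]{dreadlocks}. Your high-level strategy---uniform hyperbolic contraction of inverse branches of $f$ on $W=\C\setminus P(f)$ near infinity, the telescoping bound $d_W(\vartheta^k(z),\vartheta^{k+1}(z))\leq c^k M$ with $M\geq\sup_{|w|\geq \lambda R}d_W(w,\lambda w)$, passage to the limit, and a symmetric construction of a partial inverse---is indeed the one used in those sources, so you are reconstructing essentially the right argument.

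The genuine gap is in the line ``Applying the contraction estimate $k$ times gives $d_W(\vartheta^k(z),\vartheta^{k+1}(z))\leq c^k\,d_W(g^k(z),\lambda g^k(z))$.'' The uniform bound $\|\Deriv\varphi\|_W\leq c<1$ for a single-step inverse branch $\varphi$ of $f$ is only available when the argument of $\varphi$ has modulus above a fixed threshold. Writing your branch of $f^{-k}$ as $\varphi_1\circ\cdots\circ\varphi_k$ and applying the chain rule, the $j$-th factor is evaluated not at points of the original segment from $g^k(z)$ to $\lambda g^k(z)$ but at its successive pullbacks, whose endpoints are $\vartheta^{j-1}(g^{k-j+1}(z))$ and $\vartheta^{j}(g^{k-j+1}(z))$ for $j\geq 2$. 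That these stay in the region $\{|w|\geq R'\}$ where the $\leq c$ contraction applies is not a priori available---indeed, it is essentially what the theorem asserts---and assuming it makes the argument circular. Closing the gap requires a genuine bootstrap: one establishes simultaneously, by an induction that strengthens the hypothesis, that all the pulled-back points $\vartheta^j(g^i(z))$ for $z\in J_{\geq R}(g)$ stay in a fixed region near infinity (using the density estimate $\rho_W(w)\asymp 1/(|w|\log|w|)$ to convert a bounded hyperbolic displacement from $g^{i+j}(z)$ into a lower modulus bound), and that the $c^k$ decay holds there. In \cite{rigidity} and \cite{eremenkolyubich} this is done cleanly by passing to logarithmic coordinates, where the expansion is uniform on an explicit half-plane and the forward-invariance of that half-plane under the relevant inverse branches is verified before iterating. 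As written, your bound is asserted rather than derived, and this is where the real work lies---more so than the branch-coherence issue you flag, which is a genuine but fillable loose end (it follows from the fact that $\vartheta^k(z)$ and $\vartheta^{k+1}(z)$ are the two endpoints of the lift under $f^k$ of the isotopy path $t\mapsto\vartheta^t(g^k(z))$, $t\in[0,1]$, and so are joined by a lifted arc in $\mathcal T^k$).

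A smaller but related point concerns the surjectivity claim $J_{\geq Q}(f)\subset\theta(J_{\geq R}(g))$. The symmetric construction of $\psi^k=(g^k)^{-1}\circ f^k$ is the right idea, but convergence of $\psi^k$ is not enough: you must also show that the limit $\psi$ actually maps $J_{\geq Q}(f)$ into $J_{\geq R}(g)$, i.e.\ that the $g$-orbit of $\psi(w)$ avoids $B(0,R)$. This is the same bootstrap issue run in the opposite direction. Note also that injectivity of the limit $\theta$ does not follow from the $\vartheta^k$ being injective (limits of injections need not be injective); it must come from exhibiting $\psi$ as a two-sided inverse on the overlap, which again requires the two constructions to be matched up with some care.
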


As mentioned in the introduction, this result leads to the existence of a natural bijection between escaping sets. 
 
 \begin{thm}[Natural bijection]\label{thm:naturalbijection}
   For every $z\in I(g)$, the values $\theta^k(z)$ converge to a limit
     $\theta(z) \in I(f)$. The function $\theta\colon I(g)\to I(f)$ is a bijection,
     and satisfies $\theta\circ g = f\circ \theta$. 
 \end{thm}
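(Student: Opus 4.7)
The plan is to bootstrap Theorem~\ref{thm:boettcher}, which handles points in $J_{\geq R}(g)$, to all of $I(g)$ via the functional equation~\eqref{phi2}. Given $z \in I(g)$, since $g^n(z) \to \infty$ there is a smallest $N = N(z) \geq 0$ with $g^N(z) \in J_{\geq R}(g)$. By Theorem~\ref{thm:boettcher}, $\vartheta^{j}(g^N(z))$ converges to $w \defeq \theta(g^N(z)) \in J(f)$. Iterating~\eqref{phi2} gives
\[ f^N(\vartheta^k(z)) = \vartheta^{k-N}(g^N(z)), \qquad k \geq N, \]
so $f^N(\vartheta^k(z)) \to w$, and any spherical accumulation point $\zeta$ of $(\vartheta^k(z))$ satisfies $f^N(\zeta) = w$.

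The heart of the proof is to show that $(\vartheta^k(z))$ has a \emph{unique} accumulation point. For this I would exploit the component structure that the conformal isomorphism $\vartheta^k \colon \mathcal{U}^k \to \mathcal{T}^k$ transports. Let $U^k_z$ be the component of $\mathcal{U}^k$ containing $z$, and set $T^k_z \defeq \vartheta^k(U^k_z)$, the component of $\mathcal{T}^k$ containing $\vartheta^k(z)$. The identity $f^N \circ \vartheta^k = \vartheta^{k-N} \circ g^N$ implies that $f^N$ restricts to a surjection of $T^k_z$ onto the component $T^{k-N}_{g^N(z)}$ of $\mathcal{T}^{k-N}$ containing $\vartheta^{k-N}(g^N(z))$. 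By Theorem~\ref{thm:boettcher}, for large $k$, $\vartheta^{k-N}(g^N(z))$ and $w$ lie in a common component, and the unbounded pieces of $T^{j}\cap T^{j-1}$ containing the tail of $\vartheta^j(g^N(z))$ shrink as $j$ grows. Pulling back along the coherent branch of $f^N$ pinned down by this component structure (using that $f \colon f^{-1}(W) \to W$ is an unramified covering for $W \defeq \C \setminus P(f)$), the points $\vartheta^k(z)$ lie in a nested sequence of unbounded regions whose relevant diameters shrink by Koebe distortion, forcing $(\vartheta^k(z))$ to be Cauchy. The limit is the desired $\theta(z)$.

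To conclude I would verify the remaining claims. The identity $\theta \circ g = f \circ \theta$ follows from~\eqref{phi2} on passing to the limit. Since $f^n(\theta(z)) = \theta(g^n(z))$ and $g^n(z) \to \infty$, continuity of $\theta|_{J_{\geq R}(g) \cup \{\infty\}}$ at infinity (Theorem~\ref{thm:boettcher}) gives $|f^n(\theta(z))| \to \infty$, so $\theta(I(g)) \subset I(f)$. Surjectivity uses the inclusion $J_{\geq Q}(f) \subset \theta(J_{\geq R}(g))$: any $w \in I(f)$ has $f^M(w) \in J_{\geq Q}(f)$ eventually, and a backward pull along the associated component chain produces a $g$-preimage in $I(g)$. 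Injectivity reduces to that of $\theta$ on $J_{\geq R}(g)$, via the functional equation and the fact that the component structure determines the choice of preimage branch uniquely.

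The main obstacle will be the convergence argument of the second paragraph: ruling out that different subsequences of $(\vartheta^k(z))$ pick out different $f^{-N}$-preimages of $w$. Because $f$ is transcendental, there are infinitely many preimages to separate, and generic inverse-branch arguments do not suffice; instead one must track the combinatorics of components of $\mathcal{T}^k$ coherently and use hyperbolic contraction under the covering $f^{-1}(W) \to W$ to pin down a unique limit.
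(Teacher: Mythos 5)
Your proposal follows the paper's scaffolding (bootstrapping Theorem~\ref{thm:boettcher} through the functional equation~\eqref{phi2} and tracking components of $\mathcal{T}^k$), but the crucial convergence step is left as an acknowledged gap, and the tools you suggest for closing it --- Koebe distortion, hyperbolic contraction under the covering $f\colon f^{-1}(W)\to W$ --- are not the right ones. The regions in play (the components $T^k$ of $\mathcal{T}^k$ and the unbounded components of $T^k\cap T^{k-1}$) are unbounded; their diameters do not shrink, so no metric contraction argument directly forces Cauchyness.

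The paper closes the gap with a purely combinatorial observation requiring no metric estimate at all. Write $T_n^k$ for the component of $\mathcal{T}^k$ containing $\theta^k(g^n(z))$, and $G_n^{k+1}$ for the unbounded component of $T_n^{k+1}\cap T_n^k$. One checks that $f\colon T_n^{k+1}\to T_{n+1}^{k}$ is a conformal isomorphism sending $G_n^{k+1}$ onto $G_{n+1}^{k}$. Let $\phi^k$ be the branch of $f^{-n_0}$ mapping $T_{n_0}^{k-n_0}$ onto $T_0^k$, where $n_0$ is chosen with $g^{n_0}(z)\in J_{\geq R}(g)$. Since $G_0^{k+1}\subset T_0^k\cap T_0^{k+1}$, the branches $\phi^k$ and $\phi^{k+1}$ both restrict on $G_{n_0}^{k+1-n_0}$ to the inverse of the same conformal bijection $f^{n_0}\colon G_0^{k+1}\to G_{n_0}^{k+1-n_0}$, so they agree there. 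This collapses the telescope: $\theta^k(z)=\phi^{n_0+1}\bigl(\theta^{k-n_0}(g^{n_0}(z))\bigr)$ for all $k\geq n_0+1$, and convergence is immediate from continuity of the \emph{single fixed} branch $\phi^{n_0+1}$ applied to the sequence supplied by Theorem~\ref{thm:boettcher}. You do not need to rule out divergent preimage choices after the fact; you prove a priori that there is no choice to make. A further caution: since $f^N$ is transcendental, $f^N(\theta^k(z))\to w$ does not by itself exclude a subsequence of $\theta^k(z)$ tending to $\infty$, which is another reason the direct identity above is needed rather than just inverting $f^N$ at accumulation points. Your injectivity and surjectivity sketches are fine in outline and match the paper's once the coherence of branches is in place.
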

\begin{proof}
 Let $R$ be as in Theorem~\ref{thm:boettcher}. 
   Let $z\in I(g)$, and write $z_n\defeq g^n(z)$. 
  For $n\geq 0$ and $k\geq 1$, 
let $T_n^k$ be the connected component
   of $\mathcal{T}^k$ such that 
     \[ w_n^k \defeq \theta^k(z_n) \in T_n^k. \]
    That is, if $U_n^k$ is the connected component of $\mathcal{U}^k$ containing 
    $z_n$, then $T_n^k = \theta^k(U_n^k)$. In particular,
     $T_n^{k+1}$ tends to infinity within $T_n^k$. 
     Let $G_n^{k+1}$ denote the unbounded connected component of 
     $T_n^{k+1}\cap T_n^{k}$. 

 Since $z\in I(g)$, there is 
   $n_0\geq 0$ such that 
   $z_{n_0} \in J_{\geq R}(g)$.
    Then, by Theorem~\ref{thm:boettcher}, the sequence
     $w_{n_0}^k$ converges to a point $w_{n_0}$. 
     Moreover, by the final statement of the theorem, 
     $w_{n_0}=\theta(z_{n_0})$ and $w_{n_0}^k=\theta^k(z_{n_0})$ belong to the same connected component of
     $\mathcal{T}^k$ for $k\geq 1$. In other words, $w_{n_0}\in T_{n_0}^k$, and we see that 
     $w_{n_0},w_{n_0}^j \in G_{n_0}^{k}$ for 
      $j\geq k \geq 2$.  
    
    The function $f$ maps $T_n^{k+1}$ conformally to a component of $\mathcal{T}^{k}$. By the functional relation~\eqref{phi2}, 
      \[ f(w_n^{k+1}) = f(\theta^{k+1}(z_n)) = \theta^k(z_{n+1}) = w_{n+1}^k. \]
    So $f(T_n^{k+1}) = T_{n+1}^k$, and $f\colon T_n^{k+1}\to T_{n+1}^k$ is a conformal isomorphism. 
      It follows that $f(G_n^{k+1}) = G_{n+1}^{k}$ for $k\geq 2$. 

   Inductively we see that $f^{n_0} \colon T_0^k \to T_{n_0}^{k-n_0}$ is a conformal isomorphism for 
      $k\geq n_0+1$, with $f^{n_0}(G_0^{k+1}) = G_{n_0}^{k+1-n_0}$. Let 
     \[ \phi^k \colon T_{n_0}^{k-n_0} \to T_0^k \]
       be the corresponding branch of $f^{-n_0}$. 
      Since $G_0^{k+1}$ is also a subset of $T_0^{k+1}$, it follows that
     $\phi^k = \phi^{k+1}$ on $G_{n_0}^{k+1-n_0}$.
      
  Hence we see that 
     \[ w_0^k = \phi^k(w_{n_0}^{k-n_0}) = \phi^{k-1}(w_{n_0}^{k-n_0}) = 
          \dots = \phi^{n_0+1}(w_{n_0}^{k-n_0}) \]
   for all $k\geq n_0+1$. In particular, 
    $w_0^k$ converges to $w \defeq \phi^{n_0+1}(w_{n_0})\in I(f)$. 
     Setting $\theta(z) \defeq w$, we have proved the existence of the limit function
     $\theta\colon I(g)\to I(f)$, which satisfies the desired functional
     relation by~\eqref{phi2}. 
          
   It remains to show that the map $\theta$ 
     is a bijection. Firstly suppose that $z,\tilde{z}\in I(g)$ with $\theta(z)=\theta(\tilde{z}) \eqdef w$. Let $n_0$
       be chosen sufficiently large that $f^{n_0}(z),f^{n_0}(\tilde{z})\in J_{\geq R}(g)$. Then 
         \[ \theta(g^{n_0}(z)) = \theta(g^{n_0}(\tilde{z})) = f^{n_0}(w). \]
      Since the restriction of $\theta$ to
       $I(g)\cap J_{\geq R}(g)$ is injective, we see that 
       $g^{n_0}(z) = g^{n_0}(\tilde{z})$.   
       
  Now use the same notation as above. Then $T_0\defeq T_0^{n_0+1}$ is the component of $\mathcal{T}^{n_0+1}$ containing 
       $w_{n}$, and $T_{n_0}\defeq T_{n_0}^1$ is the component of $\mathcal{T}^1$ containing $w_{n_0}$, with
        $f^{n_0}(T_0) = T_{n_0}$. Let $U_0$ and $\tilde{U}_0$ be the
        components of $\mathcal{U}^{n_0+1}$ containing $z$ and $\tilde{z}$, respectively. 
        By construction, we have 
           \[ \theta^{n_0+1}(U_0) = T_0 = \theta^{n_0+1}(\tilde{U_0}). \]
        So $U_0=\tilde{U_0}$. As $g^{n_0}|_{U_0}$ is injective, we conclude that 
        $z = \tilde{z}$, as desired.
         
     Similarly, let $w \in I(f)$, and write $w_n \defeq f^n(w)$. By Theorem~\ref{thm:boettcher}, there is 
       $n_0$ and $z_{n_0}\in J_{\geq R}(g)$ such that
       $\theta(z_{n_0}) = w_{n_0}$. Let $T_0$ and $T_{n_0}$ be as above, and set 
        \[ U_0 \defeq (\theta^{n_0+1})^{-1}(T_0)\subset \mathcal{U}^{n_0+1} \qquad \text{and} \qquad
           U_{n_0} \defeq (\theta^1)^{-1}(T_{n_0}) \subset \mathcal{U}^1. \] 
     Then $z_{n_0}\in U_{n_0}$, and $g^{n_0}\colon U_0 \to U_{n_0}$ is a conformal isomorphism by the functional relation~\eqref{phi2}. 
      Let $z\in U_0$ be the unique point with $g^{n_0}(z) = z_{n_0}$. According to the construction of $\theta(z)$ in the first
       part of the proof, 
       $\theta(z)$ is the unique point in $\theta^{n_0+1}(U_0) = T_0$ with
	      $f^{n_0}(\theta(z)) = \theta(z_{n_0}) = w_{n_0}$. Thus $\theta(z) = w$ and $\theta$ is
       surjective, as claimed. 
       \end{proof}

Recall from the introduction that $f$ is said to be \emph{docile} if the bijection
  $\theta$ is continuous and extends to a continuous function
  on $J(g)\cup\{\infty\}$.
  Observe that  $\theta$ is not, strictly speaking, canonical, as we could have chosen 
    a different isotopy between $\theta^0$ and $\theta^1$, leading to a different choice 
    of $\theta$. Moreover, the initial choice of $\theta^0$ and $\theta^1$ provides a 
    ``quasiconformal equivalence'' between $g$ and $f$; there may be other such
    equivalences which can be used for the same purpose. However, using the
    results of~\cite{rigidity}, it can be shown that 
    any two possible bijections that arise in this manner differ by a 
    quasiconformal self-conjugacy of $g$ on its Julia set, much in the same way that
    the B\"ottcher map for a polynomial of degree $d$ is defined only up to multiplication 
    with a $(d-1)$-th root of unity. 
   Similarly, a different choice of $\lambda$, and hence of $g$, will lead to the same
     function $\theta$, up to a quasiconformal conjugacy between the corresponding 
       disjoint-type functions on their Julia sets.  In particular, the definition of
       docility is independent of these choices. 

 Properties of docile functions will be studied in greater detail in a forthcoming 
  separate article. Here, we restrict to showing that, 
  for a docile function $f$, the conjugacy $\theta$ necessarily has the 
  properties stated in Corollary~\ref{cor:main}.       
\begin{prop}[Semiconjugacies for docile functions]\label{prop:docilesemiconjugacy}
 Let $f$, $g$ and $\theta$ be as above. Suppose that $f$ is docile. Then
   the continuous extension $\theta\colon J(g)\cup\{\infty\} \to J(f)\cup\{\infty\}$ has the
   following properties. 
 \begin{enumerate}[(a)]
    \item $\theta(\infty)=\infty$.\label{item:infinityfixed}
    \item $\theta$ is surjective onto $J(f)\cup\{\infty\}$.\label{item:thetasurjective} 
    \item $\theta(J(g))  = J(f)$ and $f \circ \vartheta = \vartheta \circ g$ on $J(g)$.\label{item:juliasets} 
   \item $\vartheta\colon I(g)\to I(f)$ is a homeomorphism and $\vartheta^{-1}(I(f)) = I(g)$.\label{item:escapingsets}
   \item For each component $C$ of $J(g)$ the map $\vartheta\colon C\cup\{\infty\} \to \vartheta(C)\cup\{\infty\}$ is a homeomorphism.\label{item:juliacontinuum}
 \end{enumerate}
\end{prop}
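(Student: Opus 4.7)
The plan is to derive the five items in the order listed, using the explicit construction of $\theta$ from Theorems~\ref{thm:boettcher} and~\ref{thm:naturalbijection}, the compactness of $J(g)\cup\{\infty\}$ and $J(f)\cup\{\infty\}$ in $\Ch$, and the density of $I(g)\cup\{\infty\}$ in $J(g)\cup\{\infty\}$ (which holds because $J(g)=\overline{I(g)}$ by Eremenko--Lyubich). Part~\ref{item:infinityfixed} is immediate: by Theorem~\ref{thm:boettcher}, $\vartheta^k(\infty)=\infty$ and $\vartheta^k\to\theta$ spherically on $J_{\ge R}(g)\cup\{\infty\}$, so $\theta(\infty)=\infty$. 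For part~\ref{item:thetasurjective}, the continuous image $\theta(J(g)\cup\{\infty\})$ is compact in $J(f)\cup\{\infty\}$ and contains both $\theta(I(g))=I(f)$ and $\infty$; since $\overline{I(f)}=J(f)$ by Eremenko--Lyubich, the image equals $J(f)\cup\{\infty\}$.

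For part~\ref{item:juliasets}, I will first extend the functional equation $f\circ\theta=\theta\circ g$ from $I(g)$ to $J(g)$ by continuity and density. To establish $\theta(J(g))=J(f)$, set $A\defeq\theta^{-1}(\infty)\cap J(g)$: it is closed, forward invariant (since $f(\infty)=\infty$) and backward invariant (since $f^{-1}(\infty)=\{\infty\}$) under $g$. A standard Montel-type argument then shows that a transcendental entire function admits no nonempty proper closed completely invariant subset of its Julia set: for $z_0\in J(g)\setminus A$ and a small open neighborhood $V$ of $z_0$ disjoint from $A$, $\bigcup_n g^n(V)$ omits at most one point of $\C$, so contains some $a\in A$; complete invariance then forces $V$ to meet $A$, a contradiction. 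Since $A\neq J(g)$ (as $\theta|_{I(g)}\subseteq\C$), $A=\emptyset$, which combined with part~\ref{item:thetasurjective} yields $\theta(J(g))=J(f)$.

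For part~\ref{item:escapingsets}, if $\theta(z)\in I(f)$ then $\theta(g^n(z))=f^n(\theta(z))\to\infty$ in $\Ch$; by part~\ref{item:juliasets}, $\theta^{-1}(\infty)=\{\infty\}$, so $\theta$ is proper at $\infty$, forcing $g^n(z)\to\infty$ and hence $z\in I(g)$. Continuity of the inverse follows from a standard subsequential-limit argument: for $w_n\to w$ in $I(f)$, any limit point $z^*$ of $\theta^{-1}(w_n)$ in $J(g)\cup\{\infty\}$ satisfies $\theta(z^*)=w\in I(f)$, hence $z^*\in I(g)$, and $z^*=\theta^{-1}(w)$ by the injectivity of $\theta|_{I(g)}$ from Theorem~\ref{thm:naturalbijection}.

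Part~\ref{item:juliacontinuum} will be the main obstacle. By~\ref{item:juliasets}, $\theta(C)\subseteq J(f)$, so $\theta\colon C\cup\{\infty\}\to\theta(C)\cup\{\infty\}$ is a continuous surjection between compact Hausdorff spaces, and it will suffice to prove injectivity. Injectivity on $C\cap I(g)$ comes from Theorem~\ref{thm:naturalbijection}, and~\ref{item:escapingsets} rules out collisions between escaping and non-escaping points. The remaining case, distinct non-escaping points $z,z'\in C$ with $\theta(z)=\theta(z')$, is the delicate one: $z$ and $z'$ lie, for every $k\ge 0$, in a common component $U^k$ of $\mathcal{U}^k$ on which $\vartheta^k\colon U^k\to T^k$ is a conformal isomorphism. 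The per-component injectivity of the approximating maps, together with structural information about components of disjoint-type Julia sets and the homeomorphism statement of Theorem~\ref{thm:boettcher}, is what rules out the collision. The earlier items reduce to soft compactness and density arguments; by contrast, separating non-escaping points within a single component requires finer structural input about how the $\vartheta^k$ behave on the shrinking nest of domains $U^k$.
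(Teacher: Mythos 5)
Your treatment of parts~\ref{item:infinityfixed}--\ref{item:escapingsets} follows essentially the same route as the paper: \ref{item:infinityfixed} and \ref{item:thetasurjective} from Theorem~\ref{thm:boettcher} and compactness/density, \ref{item:juliasets} via the backward-invariant set $\theta^{-1}(\infty)\cap\C$, and \ref{item:escapingsets} via a standard topological fact about restrictions of continuous surjections between compact spaces. Two small remarks on~\ref{item:juliasets}: your claim that $A$ is forward invariant ``since $f(\infty)=\infty$'' is not correct as stated --- $\infty$ is an essential singularity of $f$, not a fixed point, so the semiconjugacy gives no information about $\theta(g(z))$ when $\theta(z)=\infty$. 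This does no harm because your Montel/blowing-up argument uses only backward invariance. Note also that the blowing-up argument requires handling the exceptional value; the paper circumvents this by citing the fact that the Julia set of a disjoint-type function contains no Fatou exceptional points, so the backward orbit of any point of $J(g)$ is dense. Either variant works.

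Part~\ref{item:juliacontinuum} is where your proposal has a genuine gap. You correctly identify that the delicate case is two distinct non-escaping points $z,z'$ in the same component $C$ of $J(g)$ with $\theta(z)=\theta(z')$, and you correctly note that escaping/non-escaping collisions are ruled out by~\ref{item:escapingsets}. But for the remaining case you only gesture at ``structural information about components of disjoint-type Julia sets'' and the behaviour of $\vartheta^k$ on the nest $U^k$, without producing an argument. The paper's key input here is a head-start/separation result for disjoint-type maps (cf.\ \cite[Lemmas~3.1 and~3.2]{RRRS}): for distinct $z,w$ in the same component of $J(g)$, one has $\bigl|\,|g^n(z)|-|g^n(w)|\,\bigr|\to\infty$. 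Combined with the assumption that $z$ is non-escaping (so some subsequence $z_{n_k}=g^{n_k}(z)$ stays bounded), this forces $w_{n_k}=g^{n_k}(w)\to\infty$, hence $\theta(w_{n_k})\to\infty$ by~\ref{item:infinityfixed}--\ref{item:escapingsets} while $\theta(z_{n_k})$ stays bounded by~\ref{item:juliasets}; the semiconjugacy then gives $\theta(z)\neq\theta(w)$. Without a fact of this type --- which encodes quantitative separation along a single hair, not just topology of the component --- there is no obvious way to make your sketch work; per-component injectivity of the finite-stage maps $\vartheta^k$ does not by itself pass to the limit.
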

\begin{proof}
    Part~\ref{item:infinityfixed} follows immediately from
      Theorem~\ref{thm:boettcher}. 

  In the following, 
    let us write $\hat{J}(g) \defeq J(g)\cup\{\infty\}$, and similarly for $\hat{J}(f)$. 
    By Theorem~\ref{thm:naturalbijection}, $\theta$ is a bijection between
    $I(g)$ and $I(f)$. Since $I(f)$ is dense in $\hat{J}(f)$, and $\hat{J}(g)$ 
    is compact,
    it follows that $\theta$ is indeed surjective onto $\hat{J}(f)$, 
    establishing~\ref{item:thetasurjective}. 
    
  To prove~\ref{item:juliasets}, 
   let 
    $z\in J(g)$, and let $z^j\in I(g)$ be a sequence converging to $z$.
       Set $w\defeq \theta(z)$ and $w^j \defeq \theta(z^j)$; so $w^j\to w$. 
       If $w\in\C$, then we have 
       \begin{equation}\label{eqn:docileconjugacy}
        \theta(g(z)) = \lim_{j\to\infty} \theta(g(z^j)) =
              \lim_{j\to\infty} f(\theta(z^j)) = \lim_{j\to\infty} f(w^j) = f(w) \in \C. 
       \end{equation}
          Conversely, if $\theta(g(z))=\infty$, then $\theta(z) = \infty$. Hence the set
          \[ X \defeq \theta^{-1}(\infty) \cap \C = \{z\in J(g) \colon \theta(z) = \infty \} \]
          is backwards-invariant. The Julia set of the disjoint-type function $g$ contains no Fatou exceptional points, and hence
          the backwards orbit of any $z\in J(g)$ is dense in $J(g)$. As $X$ 
          is closed in $J(g)$, we have either $X=\emptyset$ or $X=J(g)$. The latter is impossible
          since $\theta$ maps $I(g)$ to $I(f)$.
          
     So $\theta(J(g))\subset J(f)$, and by~\ref{item:infinityfixed} and~\ref{item:thetasurjective},
       we have equality. The second part of~\ref{item:juliasets} now also follows from~\eqref{eqn:docileconjugacy}.
       
     As for~\ref{item:escapingsets}, we already know that
        $\vartheta\colon I(g) \to I(f)$ is a continuous bijection. 
        Moreover, by compactness of $\hat{J}(g)$, together with~\ref{item:infinityfixed} 
        and~\ref{item:juliasets}, we have
           \[ z_n \to \infty \quad \Leftrightarrow\quad \theta(z_n)\to\infty \]
           for any sequence $z_n$ in $\hat{J}(g)$. In particular, 
           $I(g) = \vartheta^{-1}(I(f))$. It remains to show that the inverse 
           of the restriction $\vartheta|_{I(g)}$ is continuous. This is a consequence of
           the following fact: If $h\colon X\to Y$ is a continuous surjection between
           compact metric spaces, 
           and $h\colon h^{-1}(B)\to B$ is injective for some $B\subset Y$,
           then this restriction is a homeomorphism. 
           See e.g.~\cite[Lemma~2.2.13]{kahnremovability}.
           
    Finally, we prove~\ref{item:juliacontinuum}. Let $C$ be a connected component 
       of $J(g)$. Since $\hat{C} = C\cup \{\infty\}$ is compact, it remains to show
       that $\theta$ is injective on the set of non-escaping points in $C$. So let
       $z,w\in C\setminus I(g)$ with $z\neq w$, and set $z_n \defeq g^n(z)$, 
       $w_n\defeq g^n(w)$. By assumption, there is a subsequence 
        $(z_{n_k})$ such that $\sup \lvert z_{n_k}\rvert <\infty$. On the other hand,
         we have 
        \[ \bigl\lvert \lvert z_n\rvert - \lvert w_n\rvert \big\rvert \to \infty \]
        as $n\to\infty$; see e.g.~\cite[Lemmas~3.1 and~3.2]{RRRS}. In particular, $\lvert w_{n_k}\rvert \to \infty$. 
        So $\theta(w_{n_k})\to \infty$, while $\theta(z_{n_k})$ does not accumulate on
        $\infty$ by~\ref{item:juliasets}. So $\theta(w_{n_k})\neq \theta(z_{n_k})$ for sufficiently large $k$;
         by the semiconjugacy relation, this implies
         $\theta(w) \neq \theta(z)$, as claimed.
      \end{proof}

  We conclude the section by observing that, in order to establish docility, 
    it is enough to see that the 
    maps $\theta^n$ converge uniformly.           
 \begin{obs}\label{obs:docileconvergence}
   If the convergence of 
      $\theta^n|_{I(g)}\to \theta$ is uniform with respect to the spherical
     metric, then $f$ is docile. 
 \end{obs}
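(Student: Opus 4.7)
The plan is to leverage the spherical continuity of each conformal approximant $\theta^k$ on the whole compact set $K\defeq J(g)\cup\{\infty\}$, together with density of $I(g)$ in $K$, in order to upgrade the hypothesised uniform Cauchyness on $I(g)$ to uniform Cauchyness on $K$. The uniform limit will then be the continuous extension required by Definition~\ref{defn:docile}.

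First I would record that each $\theta^k$ is spherically continuous on $K$. By \eqref{phi1} the map $\theta^k\colon \mathcal{U}^k\to\mathcal{T}^k$ is a conformal isomorphism, and as noted in the paragraph following \eqref{phi2} it extends to a homeomorphism of $\overline{\mathcal{U}^k}\cup\{\infty\}$ onto $\overline{\mathcal{T}^k}\cup\{\infty\}$ sending $\infty$ to $\infty$. Since $J(g)=\bigcap_k\mathcal{U}^k\subset\overline{\mathcal{U}^k}$, the restriction $\theta^k|_K$ is therefore spherically continuous. Next, for a disjoint-type map $g$ the escaping set $I(g)$ is well-known to be dense in $J(g)$ (see e.g.~\cite{lassearclike}), and since $I(g)$ is unbounded it is spherically dense in the compact set $K$.

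With these two ingredients in hand, the core argument is standard. Given $\varepsilon>0$, the hypothesis furnishes $N$ such that $d_{\mathrm{sph}}(\theta^n(z),\theta^m(z))<\varepsilon$ for every $z\in I(g)$ and all $n,m\geq N$. For arbitrary $w\in K$ choose $w_j\in I(g)$ with $w_j\to w$; by spherical continuity of $\theta^n$ and $\theta^m$ on $K$, passing to the limit yields $d_{\mathrm{sph}}(\theta^n(w),\theta^m(w))\leq\varepsilon$. Thus $(\theta^n)$ is uniformly Cauchy on $K$ in the spherical metric, and so converges uniformly to a continuous map $\tilde\theta\colon K\to\Ch$. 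The limit $\tilde\theta$ agrees with $\theta$ on $I(g)$, fixes $\infty$ (as each $\theta^n$ does), and takes values in the closed set $J(f)\cup\{\infty\}$ since $\theta(I(g))\subset I(f)\subset J(f)$. Hence $\tilde\theta$ is the continuous extension required by Definition~\ref{defn:docile}, and $f$ is docile.

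There is no substantive obstacle: the only points that require any attention are the spherical continuity of the $\theta^k$ at $\infty$, which is already built into the construction preceding Theorem~\ref{thm:boettcher}, and the density of $I(g)$ in $J(g)\cup\{\infty\}$, which is standard for disjoint-type functions.
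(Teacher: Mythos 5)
Your proof is correct and follows essentially the same approach as the paper's: each $\theta^k$ is continuous on $\hat J(g)=J(g)\cup\{\infty\}$, $I(g)$ is dense there, and uniform Cauchyness on the dense subset therefore upgrades to uniform Cauchyness (hence uniform convergence to a continuous limit) on all of $\hat J(g)$. The paper states this in one line; you simply spell out the standard $\varepsilon$-argument and add the (correct, if implicit in the paper) observation that the limit lands in $J(f)\cup\{\infty\}$.
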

 \begin{proof}
    Recall
      that $\theta^n$ is defined and continuous on 
      $\mathcal{U}^n \cup\{\infty\} \supset \hat{J}(g) \defeq J(g)\cup\{\infty\}$. Since $\theta^n$ converges uniformly
      on $I(g)$, and $I(g)$ is dense in $\hat{J}(g)$, 
      the sequence 
      $(\theta^n)$ is Cauchy on $\hat{J}(g)$. Therefore it
      converges uniformly to a continuous function $\theta$
      as desired.
 \end{proof}
    
In \cite[Section~5]{rigidity} and \cite{semiconjugacies}, this is in fact how the semiconjugacy is constructed: 
  expansion in a suitable hyperbolic metric is used to establish uniform convergence of 
  the $\theta^n$. As mentioned in the introduction, we follow the same strategy,
  but need to take particular care  about the definition of the metric near parabolic points. 

\section{Parabolic periodic points}
\label{S.parabolic}
 In this section, we collect some local results concerning parabolic points. These
  are well-known, but we know of no reference that contains them precisely 
  in the form that we require.
  
   Recall that a periodic point $\zeta\in\C$ of period $k$ 
   is \emph{parabolic} if 
\[ (f^k)'(\zeta) = e^{2\pi i s}, \quad\text{with } s\in\mathbb{Q}.
\]
The orbit $\zeta, f(\zeta), \ldots, f^{k-1}(\zeta)$ is called 
 a \emph{parabolic cycle}. If we replace $f$ with a sufficiently large iterate, then we obtain
\[
f(\zeta) = \zeta \quad\text{ and }\quad f'(\zeta) = 1,
\]
and now we say that $\zeta$ is a \emph{multiple fixed point}. 

We begin with some definitions and results relating to a function $g$, analytic in a neighbourhood $N$ of the origin, such that the origin is a multiple fixed point. 
All definitions and results we give here can be transferred to the original function 
$f$ by a conjugacy; we usually do so without comment. 
We can assume that $N$ is sufficiently small that $g$ has a well-defined and analytic inverse on $N$. 
For more background on the dynamics of an analytic function in a neighbourhood of a parabolic fixed point, we refer to, for example, \cite{beardonbook,Milnor}. 

Note first that there exist $a \in \C\setminus\{0\}$ and $p\in\N$ such that
\begin{equation}
\label{eq:formofg}
g(z) = z + a z^{p+1} + O(z^{p+2}).
\end{equation}
Here $p+1$ is termed the \emph{multiplicity} of the parabolic fixed point.

The dynamics of $g$ are determined by attracting and repelling vectors, which are defined as follows. A \emph{repelling vector} $\vv$ is a complex number such that $pa\vv^p = 1$, and an \emph{attracting vector} $\vv$ is a complex number such that $pa\vv^p = -1$. 

Following \cite[Section~10]{Milnor}, 
if $\zeta$ is a parabolic fixed point of multiplier one and multiplicity $p+1$, and $\vv$ is an attracting vector at $\zeta$, then we say that the orbit of a point $z$ \emph{converges to $\zeta$ in the direction $\vv$} if $(f^n(z) - \zeta) \sim \vv/n^{1/p}$ as $n\rightarrow\infty$.
%
We shall work with ``petals'' around the attracting (and repelling) directions
 that are sufficiently ``thick'', in the following sense.
\begin{proposition}[Attracting petals]
\label{prop:fatpetals}
Suppose that $N$ is a neighbourhood of the origin, that $g$ is an analytic function on $N$, and that the origin is a multiple fixed point, with attracting vector $\vv$ and of multiplicity $p+1$. Suppose finally that $\alpha \in (0, 2\pi/p)$. Then there exist a Jordan domain
 $P\subset N$ and $r_0 > 0$ such that
\begin{enumerate}[(a)]
  \item $\overline{g(P)}\subset P\cup \{0\}$;\label{item:petal1}
  \item if $z \in N$, then the orbit of $z$ converges to $0$ in the direction $\vv$ if 
    and only if $g^n(z) \in P$ for all sufficiently large values of $n$;\label{item:petal2}
  \item 
$\displaystyle{
A_{\vv}(\alpha,r_0) \defeq 
 \{ z \in \C \colon |\arg z - \arg \vv| < \alpha/2 \text{ and } |z| < r_0 \} 
     \subset P}$.
  \end{enumerate}
\end{proposition}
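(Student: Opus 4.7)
\emph{Proof plan.} I would use the standard parabolic change of coordinates to reduce $g$ to a perturbed translation near infinity. On a suitable sector $\Sigma$ around the direction $\vv$ of angular width less than $2\pi/p$ (so that a branch of $z^p$ is well-defined), set
\[ \psi(z) \defeq -\frac{1}{p\,a\,z^p}. \]
This is a conformal isomorphism from $\Sigma$ onto an unbounded sector at infinity; using $pa\vv^p=-1$, the ray $\{r\vv:r>0\}$ is mapped onto the positive real axis, and substituting~\eqref{eq:formofg} gives
\[ G(w) \defeq \bigl(\psi \circ g \circ \psi^{-1}\bigr)(w) = w + 1 + O\bigl(|w|^{-1/p}\bigr) \quad\text{as } w\to\infty. \]
Given $\alpha\in(0,2\pi/p)$, I would now fix any $q\in(p\alpha/\pi,\,2)$, which is possible precisely because $\alpha<2\pi/p$, and consider in the $w$-coordinate the ``fat petal''
\[ \Pi_R \defeq \{ w \in \C\setminus (-\infty,0] : \operatorname{Re}(w^{1/q}) > R^{1/q}\}, \]
where $w^{1/q}$ denotes the principal branch and $R>0$ will be chosen large. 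Since $q<2$, the map $u\mapsto u^q$ is a conformal isomorphism from the right half-plane $\{\operatorname{Re}(u)>R^{1/q}\}$ onto $\Pi_R$, so $\Pi_R$ is simply connected with angular opening $q\pi>p\alpha$ at infinity.

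The plan is to verify three properties of $\Pi_R$, for $R$ sufficiently large:
\begin{enumerate}[(i)]
\item $\Pi_R$ contains a truncated sector $\{w : |\arg w|\le p\alpha/2,\ |w|>W\}$ for some $W=W(R,q,\alpha)$; indeed $|\arg w^{1/q}|\le p\alpha/(2q)<\pi/2$ there, so $\operatorname{Re}(w^{1/q})\ge |w|^{1/q}\cos(p\alpha/(2q))$.
\item Forward invariance: $G(\overline{\Pi_R})\subset \Pi_R$.
\item Every orbit in $\Pi_R$ satisfies $G^n(w)=w+n+o(n)$ and so escapes to $\infty$ along the positive real direction.
\end{enumerate}
Property (i) is direct and (iii) follows from $G(w)-w\to 1$ uniformly on $\Pi_R$ together with (ii). The heart of the argument is (ii). Setting $u\defeq w^{1/q}$, a binomial expansion yields
\[ G(w)^{1/q} = u + \frac{1}{q\,u^{q-1}} + O\bigl(|u|^{1-q-q/p}\bigr) + O\bigl(|u|^{1-2q}\bigr). \]
On $\overline{\Pi_R}$ one has $\operatorname{Re}(u)\ge R^{1/q}>0$, hence $|\arg u|<\pi/2$, and since $q<2$ this gives $(q-1)|\arg u|<\pi/2$, so
\[ \operatorname{Re}\!\bigl(1/(q\,u^{q-1})\bigr) \ge c\,|u|^{-(q-1)}, \quad c \defeq \cos((q-1)\pi/2)/q>0. \]
Both error terms are smaller than $|u|^{-(q-1)}$, by factors $|u|^{-q/p}$ and $|u|^{-q}$ respectively, so taking $R$ large forces $\operatorname{Re}(G(w)^{1/q})>\operatorname{Re}(w^{1/q})$ on $\overline{\Pi_R}$, which establishes~(ii).

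To conclude, set $P\defeq \psi^{-1}(\Pi_R)$; after shrinking $N$ if necessary, this is a Jordan domain with $0$ on its boundary (the two ends of the boundary of $\Pi_R$ in the $w$-plane both correspond to $0$ in the $z$-plane, closing the curve up into a Jordan curve). Property~(a) is immediate from (ii); property~(c) follows from (i) with $r_0\defeq (|pa|W)^{-1/p}$. For~(b), one direction follows from (iii) translated back through $\psi$; for the converse, convergence of the orbit of $z$ to $0$ in direction $\vv$ means $f^n(z)\sim \vv/n^{1/p}$, so the iterates eventually enter the sector $A_{\vv}(\alpha,r_0)\subset P$. The main obstacle is (ii): as $\alpha\nearrow 2\pi/p$ one must take $q\nearrow 2$, the region $\Pi_R$ degenerates into almost the entire slit plane, and the constant $c(q)$ in the key estimate shrinks to $0$. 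It is here that the strict inequality $\alpha<2\pi/p$ is essential, keeping $c(q)$ bounded away from zero so that the leading gain dominates the $O(|w|^{-1/p})$ perturbation from the nonlinearity of $G$.
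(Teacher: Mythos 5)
Your proof is correct and follows the same overall strategy as the paper's: conjugate $g$ by $\kappa(z)=-1/(paz^p)$ (your $\psi$) so that the dynamics near $0$ become a perturbed translation $G(w)=w+1+O(|w|^{-1/p})$ near $\infty$, and then exhibit a forward-invariant region in the $w$-coordinate whose $\kappa$-preimage contains the prescribed sector. The only genuine difference is the shape of the invariant region: the paper takes, for $\phi>0$ small and $L$ large, the complement of the thin cone $\{|\pi-\arg(w-L)|\le\phi\}$ around the negative real axis, and asserts forward invariance is ``readily deduced'' from $G(w)=w+1+o(1)$; you instead use the $q$-fattened half-plane $\Pi_R=\{w\in\C\setminus(-\infty,0]:\operatorname{Re}(w^{1/q})>R^{1/q}\}$ with $q\in(p\alpha/\pi,2)$ and prove invariance by an explicit expansion, isolating the leading gain $\operatorname{Re}(1/(qu^{q-1}))\ge c|u|^{-(q-1)}$ with $c=\cos((q-1)\pi/2)/q>0$. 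Your region is essentially the paper's up to reparametrisation (angular opening $q\pi$ versus $2\pi-2\phi$), but your version buys a concrete, checkable estimate and makes visible exactly why the hypothesis $\alpha<2\pi/p$ is needed: as $q\nearrow 2$ the constant $c(q)\to0$. This is a more careful rendering of the step the paper compresses into one sentence, and it also exposes a small infelicity in the paper's own statement of the sector inclusion condition (the paper writes ``$\phi<\alpha p/2$'' where what is actually needed is $\phi<\pi-\alpha p/2$). The rest of your argument — using $q<2$ so that $u\mapsto u^q$ is injective on a half-plane, hence $\Pi_R$ is simply connected; checking the two error terms $O(|u|^{1-q-q/p})$ and $O(|u|^{1-2q})$ are subdominant; and translating back to $z$-coordinates to get a Jordan domain with vertex at $0$ — is sound.
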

\begin{proof}
We can suppose that $g$ has the form \eqref{eq:formofg}. To conjugate $g$ to a function which is close to $z \mapsto z+1$ near infinity, we define functions
\begin{equation}
\label{eq:kappadef}
\kappa(z) \defeq \frac{-1}{paz^p},
\end{equation}
defined in a neighbourhood of $0$. It is elementary to see that there is
  a function $G$, defined on a neighbourhood of $\infty$, such that 
\begin{equation}
\label{eq:Gdef}
 G \circ \kappa = \kappa \circ g.
\end{equation}
 Shrinking $N$ and restricting $g$ if necessary, 
  we may assume that $G$ is defined on
  $\kappa(N)$. 
A calculation shows that 
\begin{equation}
\label{eq:Gisnice}
G(w) = w + 1 + o(1), \quad\text{ as } w \to \infty.
\end{equation}

It can be readily deduced from \eqref{eq:Gisnice} that for each 
 $\phi \in (0, \pi/4)$, if $L=L(\phi)>0$ is sufficiently large then 
   \[ P \defeq \kappa^{-1}(\{ w \in \C \colon |\pi - \operatorname{arg}(w-L)| > \phi )\} \] 
    is a subset of $N$ that 
     satisfies~\ref{item:petal1} and ~\ref{item:petal2}. (Here the values of $\arg$ are taken 
    in $[0,2\pi)$.) If $\phi< \alpha p/2$, then clearly 
    $A_{\vv}(\alpha,r_0)\subset P$ for sufficiently small $r_0$. 
\end{proof}
  We call $P$ an \emph{attracting petal} for $g$ (of opening angle at least 
   $\alpha$) at $\vv$. We also call the set 
    $A = A_{\vv}(\alpha,r_0)$ an \emph{attracting sector of angle $\alpha$ and
      radius $r_0$}. The corresponding sets for
      the inverse $g^{-1}$ are called \emph{repelling petals} and \emph{repelling sectors}, respectively. 
      
      We will need to work with repelling sectors that are ``thin'', in the 
      sense that their opening angle $\alpha$ is sufficiently small. 
      To be definite, let $\vv$ be a repelling vector, and let us call the 
      sector $A_{\vv}(r_0) \defeq A_{\vv}(\pi/4p,r_0)$ a \emph{thin}
      repelling sector. (The angle $\pi/4p$ could be replaced by
      any number strictly less than $\pi/2p$ in the following.) 
      

 The reason for this terminology is that we have good control of $g$ and its derivative
   in a thin repelling sector. 
\begin{proposition}[Thin repelling sectors]
\label{prop:wbpetal}
Suppose that $N$ is a neighbourhood of the origin, that $g$ is an analytic function on $N$, that the origin is a multiple fixed point of $g$, and that $\vv$ is a repelling vector
of $f$ at $0$. Then there exists $r_0 > 0$ such that
\begin{equation}
\label{eq:wbpetal}
|g(z)| > |z| \quad \text{and}\quad \lvert g'(z) \rvert > 
      \frac{\lvert g(z)\rvert}{\lvert z\rvert}>1, \quad\text{for } z\in A_{\vv}(r_0). 
\end{equation}
Moreover, if $w \in A_{\vv}(r_0)$ is such that $g(w)$ lies in a thin repelling sector 
  $A_{\vv'}(r)$, then $\vv = \vv'$. 
\end{proposition}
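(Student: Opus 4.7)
The plan is to pull the whole statement back to the Fatou-type coordinate $\kappa(z) = -1/(paz^p)$ from the proof of Proposition~\ref{prop:fatpetals}, under which $g$ is conjugated to a map $G$ with $G(w) = w + 1 + o(1)$ as $w\to\infty$ by~\eqref{eq:Gisnice}. My first step would be to upgrade this to a quantitative version: substituting the Taylor expansion~\eqref{eq:formofg} into $\kappa\circ g$ and computing gives
\[ G(w) = w + 1 + O(|w|^{-1/p}), \]
and then Cauchy's inequality on a disc of radius $|w|/2$ about $w$ upgrades this to $G'(w) = 1 + O(|w|^{-1-1/p})$. A direct check using $pa\vv^p = 1$ shows that $\kappa$ sends the angular sector of half-opening $\beta$ about $\vv$ to the sector of half-opening $p\beta$ about the negative real axis, so $A_{\vv}(r_0)$ corresponds under $\kappa$ to a sector $\Sigma$ about $\mathbb{R}_{<0}$ of half-opening $\pi/8$, contained in $\{|w| > R\}$ for some $R = R(r_0)$ tending to $\infty$ as $r_0 \to 0$.

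To obtain $|g(z)| > |z|$, I would expand
\[ |G(w)|^2 - |w|^2 = 2\operatorname{Re}(w) + 2\operatorname{Re}\bigl(\bar w (G(w)-w-1)\bigr) + |G(w)-w|^2. \]
On $\Sigma$ one has $\operatorname{Re}(w) \le -|w|\cos(\pi/8)$, while the remaining terms are $O(|w|^{1-1/p}) + O(1) = o(|w|)$; so for $r_0$ small, $|G(w)|^2 - |w|^2 \le -|w|\cos(\pi/8) < 0$. Since $|\kappa(z)|\cdot|z|^p = 1/(p|a|)$, the inequality $|G(w)| < |w|$ is equivalent to $|g(z)| > |z|$. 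For the derivative, differentiating $G\circ\kappa = \kappa\circ g$ and using $|\kappa'(z)| = |z|^{-p-1}/|a|$ yields
\[ |g'(z)| = |G'(\kappa(z))|\cdot\bigl(|g(z)|/|z|\bigr)^{p+1}, \]
and the same identity $|\kappa(z)|\cdot|z|^p = 1/(p|a|)$ gives $(|g(z)|/|z|)^p = |w|/|G(w)|$ for $w = \kappa(z)$. Hence the required bound $|g'(z)| > |g(z)|/|z|$ reduces to showing $|G'(w)|\cdot|w|/|G(w)| > 1$. The previous estimate refines to $|w|/|G(w)| \ge 1 + c/|w|$ for some $c > 0$, and combined with $|G'(w)| \ge 1 - C|w|^{-1-1/p}$ this gives a product exceeding $1$ for $|w|$ large, i.e.\ for $r_0$ small.

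The main obstacle is precisely this derivative bound: both factors differ from $1$ only by quantities vanishing at infinity, and the inequality succeeds only because the quantitative rate $G(w)-w-1 = O(|w|^{-1/p})$, rather than merely $o(1)$, ensures that the gain $c/|w|$ strictly dominates the possible loss $C|w|^{-1-1/p}$. The final claim $\vv = \vv'$ is then an angular argument: distinct repelling vectors, being $p$-th roots of $1/(pa)$, have arguments differing by at least $2\pi/p$, so the closures of any two distinct thin repelling sectors are angularly separated by at least $2\pi/p - \pi/(4p) = 7\pi/(4p)$. On the other hand, $g(w) = w + aw^{p+1} + O(w^{p+2})$ gives $|\arg g(w) - \arg w| = O(|w|^p)$, so for $r_0$ small the point $g(w)$ remains within angular distance $\pi/(8p) + o(1)$ of $\arg \vv$, which prevents $g(w) \in A_{\vv'}(r)$ for any $\vv' \ne \vv$.
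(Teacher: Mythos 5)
Your argument is correct, but it is substantially more machinery than the paper uses for this particular proposition. You conjugate via $\kappa(z)=-1/(paz^p)$ to a map $G(w)=w+1+O(|w|^{-1/p})$ near $\infty$, then prove $|w|/|G(w)|\geq 1+c/|w|$ by expanding $|G(w)|^2-|w|^2$, and use a Cauchy estimate on a slightly wider sector to get $|G'(w)|\geq 1-C|w|^{-1-1/p}$; the required inequality $|G'(w)|\cdot|w|/|G(w)|>1$ then follows because the gain $c/|w|$ dominates the loss $C|w|^{-1-1/p}$. This is the right kind of quantitative refinement and parallels what the paper \emph{does} need for the harder Proposition~\ref{prop:nearparabolic}, where the Fatou coordinate and Koebe distortion are genuinely necessary.

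The paper's proof of \emph{this} proposition is instead a short power-series computation in the original variable. After normalising so that $\arg\vv=0$, the condition $pa\vv^p=1$ forces $a>0$, and on the thin sector $|\arg(z^p)|\leq\pi/8$ one has $\operatorname{Re}(az^p)>0$ with a definite lower bound $\asymp|z|^p$. Then $g(z)/z=1+az^p+o(z^p)$ has modulus $>1$, giving $|g(z)|>|z|$ directly, and $zg'(z)/g(z)=1+apz^p+o(z^p)$ has modulus $>1$ by exactly the same observation, giving the derivative bound. The final statement on sectors is read off from $g(z)/z\to 1$, as in your last paragraph. The point is that the normalisation makes the leading coefficient real and positive, so one never has to pass to infinity. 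Your route buys nothing here but costs you the domain issues around the non-injectivity of $\kappa$ (you must restrict to a fundamental sector for $G$ to be single-valued) and the need for a Cauchy estimate on a wider sector, neither of which you made fully explicit. It is worth noticing that the direct computation already settles the matter, and saving the $\kappa$/Fatou-coordinate machinery for Proposition~\ref{prop:nearparabolic}, where it earns its keep.
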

\begin{proof}
Without loss of generality, we can assume that the argument of the repelling vector is zero. Let the multiplicity of the parabolic fixed point be $p+1$. It follows that there exists $a > 0$ such that, as $z \rightarrow 0$,
\[
g(z) = z\cdot (1 + az^p + o(z^p)) \quad\text{and}\quad g'(z) = 1 + a(p+1)z^p + o(z^p).
\]
Since $a$ is real and positive and, by assumption, $|\arg(z^p)| \leq \pi/8$, the first part of \eqref{eq:wbpetal} is immediate, as is the final claim of the proposition. 

Furthermore, 
\begin{align*}
  \frac{z\cdot g'(z)}{g(z)} &= \frac{1 + a(p+1) z^p + o(z^p)}{1 + az^p + o(z^p)} \\
     &=(1 + a(p+1)z^p + o(z^p)) \cdot (1 - az^p + o(z^p)) =
        1 + apz^p + o(z^p).         
\end{align*}
This implies the second part of~\eqref{eq:wbpetal}.
\end{proof}

We also need to know that, when some number $n$ of iterates
  stay within the same thin repelling sector, the derivative of $g^n$ 
  grows sufficiently quickly. 
\begin{proposition}[Cascades within repelling sectors]
\label{prop:nearparabolic}
Suppose that $N$ is a neighbourhood of the origin, that $g$ is an analytic function on $N$, and that the origin is a multiple fixed point of $g$ of multiplicity $p+1$. 
Let $\vv$ be a repelling vector at the
  origin for $g$. 
Then there exists $r_0>0$
 with the following property. If 
    $n \in \N$, and $z \in N$ is such that $z_j \defeq g^j(z) \in A_{\vv}(r_0)$, 
    for $0 \leq j \leq n$, then 
  \begin{align} \label{eqn:shrinking}
        \lvert z\rvert &< C_1\cdot n^{-1/p} \qquad\text{and}\\
  |(g^n)'(z)| &> C_2\cdot |z|^{-(1+p)}|z_n|^{1+p},\label{eqn:derivestimate}
    \end{align}
   where $C_1,C_2>0$ are constants independent of $z$.
\end{proposition}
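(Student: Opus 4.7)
The plan is to pass to the coordinate $w = \kappa(z) = -1/(paz^p)$ introduced in the proof of Proposition \ref{prop:fatpetals}, which conjugates $g$ to a map $G$ with $G(w) = w + 1 + o(1)$ as $w \to \infty$. Under $\kappa$, the repelling direction $\vv$ (satisfying $pa\vv^p = 1$) is sent to the direction of $-\infty$, and the thin repelling sector $A_{\vv}(r_0)$ is mapped into a sector of opening $\pi/4$ around the negative real axis, intersected with the exterior of a disk of radius $W_0 \defeq 1/(p|a|r_0^p)$. Since $G$ acts almost as translation by $+1$, the number of iterates that can remain in this region is controlled by the initial modulus of $w$.

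First I would sharpen the asymptotic to $G(w) = w + 1 + O(w^{-1/p})$ by a direct computation starting from $g(z) = z + az^{p+1} + O(z^{p+2})$, and deduce $G'(w) = 1 + O(w^{-1/p-1})$ by applying the Cauchy estimate on a slightly smaller sector. Shrinking $r_0$ if necessary, I can arrange $\operatorname{Re}(G(w) - w) > 1/2$ throughout $\kappa(A_\vv(r_0))$. Writing $w_j \defeq \kappa(z_j) = G^j(w_0)$, the hypothesis forces all $w_j$ to lie in this sector, where $\operatorname{Re}(w_j) \leq -|w_j|\cos(\pi/8)$. Combined with $\operatorname{Re}(w_{j+1}) \geq \operatorname{Re}(w_j) + 1/2$, a short induction yields $|w_j| \gtrsim W_0 + (n - j)$; in particular $|w_0| \gtrsim n$. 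Via the identity $|w_0| = (p|a|)^{-1}|z_0|^{-p}$ this yields $|z_0| < C_1 n^{-1/p}$, establishing \eqref{eqn:shrinking}.

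For the derivative estimate, I would differentiate the conjugacy relation $G \circ \kappa = \kappa \circ g$ and use $\kappa'(z) = 1/(az^{p+1})$ to obtain the pointwise identity
\[ g'(z) = G'(\kappa(z)) \cdot \left(\frac{g(z)}{z}\right)^{p+1}. \]
Telescoping along the orbit $z_0, z_1, \dots, z_n$ then gives
\[ (g^n)'(z_0) = (G^n)'(w_0) \cdot \left(\frac{z_n}{z_0}\right)^{p+1}, \]
so it suffices to bound $|(G^n)'(w_0)|$ from below by a positive constant independent of $n$. Taking logarithms, this reduces to showing that $\sum_{j=0}^{n-1} |G'(w_j) - 1|$ is uniformly bounded. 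Using $|G'(w) - 1| = O(|w|^{-1/p-1})$ together with the lower bound $|w_j| \gtrsim W_0 + (n-j)$ from the previous paragraph, the sum is dominated by $\sum_{k \geq 0} (W_0 + k)^{-1/p-1}$, which converges because $1/p + 1 > 1$. Hence $|(G^n)'(w_0)| \geq C_2$ for a uniform $C_2 > 0$, and \eqref{eqn:derivestimate} follows.

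The main obstacle I anticipate is justifying the asymptotic $G(w) = w + 1 + O(w^{-1/p})$ and its differentiated version $G'(w) = 1 + O(w^{-1/p-1})$ uniformly throughout $\kappa(A_\vv(r_0))$: one must verify that the angular opening $\pi/(4p)$ of the thin repelling sector is narrow enough that $\kappa(A_\vv(r_0))$ lies inside a fixed subsector on which the asymptotic expansion is valid, so that the Cauchy estimate used to pass from $G$ to $G'$ produces constants depending only on $g$ and not on $r_0$ or $n$.
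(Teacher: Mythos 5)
Your proposal is correct but takes a genuinely different route from the paper's proof. The paper passes to the Fatou coordinate $\Phi$ at infinity, which satisfies $\Phi\circ G=\Phi+1$; iterating and differentiating gives $(G^n)'(w)=\Phi'(w)/\Phi'(G^n(w))$, and the Koebe distortion theorem, applied to the normalised Fatou coordinate on disks of radius $2$, yields explicit two-sided bounds on $|\Phi'|$, hence a uniform lower bound $|(G^n)'(w)|>27/845$. You avoid Fatou coordinates entirely: you sharpen the expansion to $G(w)=w+1+O(w^{-1/p})$, deduce $G'(w)=1+O(w^{-1/p-1})$ by Cauchy, and control $\prod_j|G'(w_j)|$ through the convergence of $\sum_k(W_0+k)^{-1/p-1}$, which hinges on the exponent $1/p+1$ being strictly greater than $1$. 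Both arguments establish \eqref{eqn:shrinking} in essentially the same way (the paper reads $\operatorname{Re}(w)<-3n/4$ off $|G^n(w)-w-n|<n/4$; you read $|w_j|\gtrsim W_0+(n-j)$ off $\operatorname{Re}(G(w)-w)>1/2$), and the telescoped identity $(g^n)'(z_0)=(G^n)'(w_0)\cdot(z_n/z_0)^{p+1}$ that you derive from $\kappa'$ is exactly the translation step the paper performs implicitly, so the two proofs reconcile at the end. Your approach is more elementary and self-contained, at the cost of needing the sharper asymptotic on $G$; the paper's trades that for invoking the existence of Fatou coordinates and Koebe's theorem and produces a clean explicit constant.

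The concern you flag about uniformity of the Cauchy estimate is legitimate but straightforward to resolve, and worth making precise: the thin sector $A_\vv(r_0)$ has angular half-opening $\pi/(8p)$ in the $z$-plane, so $\kappa(A_\vv(r_0))$ lies in the sector $\lvert\pi-\arg w\rvert<\pi/8$, while the expansion $G(w)-w-1=O(w^{-1/p})$ holds (for $|w|$ large) on the larger sector $\lvert\pi-\arg w\rvert<\pi/4$, corresponding to $z$-angle $\pi/(4p)$, where $\kappa$ is still injective. A disk $B\bigl(w,|w|\sin(\pi/8)\bigr)$ then fits inside the larger sector, and Cauchy's estimate gives $|G'(w)-1|\leq C|w|^{-1/p-1}$ with $C$ depending only on $g$ and not on $r_0$ or $n$. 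In fact, any bound $|G'(w)-1|\leq h(|w|)$ with $\sum_{k\geq 1}h(W_0+k)<\infty$ suffices for the lower bound on the product, so the argument is robust.
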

\begin{proof}
Recall that $g$ has the form \eqref{eq:formofg}, and that $p$ is the number of repelling vectors of $g$ at the origin. 
 By Proposition~\ref{prop:fatpetals}, let $P$ be a repelling petal at the origin in 
   direction $\vv$, which contains a repelling sector 
     $A_{\vv}(\pi/p,r')$.

We begin by fixing $r_0 > 0$. By \cite[Theorem 10.9]{Milnor}, there is a conformal map $\phi \colon P \to \C$, known as the \emph{Fatou coordinate}, such that
\begin{equation}
\label{eq:phiisnice}
\phi(g(z)) = \phi(z) + 1, \quad\text{for } z \in P \cap g^{-1}(P).
\end{equation}

Let $\kappa$ and $G$ be as defined in \eqref{eq:kappadef} and~\eqref{eq:Gdef}.  Let 
  $\psi$ be the branch of $\kappa^{-1}$ that maps a left half-plane 
   $H=\{a+ib\colon a<-\rho\}$, with $\rho>0$, into
    $P$. Set
\begin{equation}
\label{eq:Phidef}
\Phi = \phi \circ \psi.
\end{equation}
The conformal map $\Phi$ is known as the \emph{Fatou coordinate at infinity}. Note that it follows from \eqref{eq:phiisnice} that
\begin{equation}
\label{eq:Phiisnice}
\Phi(G(w)) = \Phi(w) + 1, \quad\text{for } w \in H\cap G^{-1}(H),
\end{equation}
and so, if $n \in \N$,
\[
\Phi(G^n(w)) = \Phi(w) + n, \quad\text{for } w \in \bigcap_{j=0}^{n} G^{-j}(H),
\]
from which we deduce that
\begin{equation}
\label{eq:Phiderivisnice}
\Phi'(G^n(w)) \cdot (G^n)'(w) = \Phi'(w), \quad\text{for } w \in \bigcap_{j=0}^{n} G^{-j}(H).
\end{equation}

By \eqref{eq:Gisnice} we can choose $r_0' \in (0, r')$ sufficiently small that 
\begin{equation}
\label{eq:Gbound}
|G(w) - w - 1| < 1/4, \quad\text{for } w \in W \defeq \kappa(B(0,r_0') \cap P). 
\end{equation}

Set $W' \defeq \{ w \in W \colon \operatorname{dist}(w, \partial W) > 2 \}$. We can then fix $r_0 \in (0, r_0')$ sufficiently small that, for the
  thin repelling sector $A\defeq A_{\vv}(r_0)\subset P$, we have 
 \[
\kappa(A) \subset W' \cap H.
\]
This completes our choice of $r_0$.

Our next goal is to find an estimate on the derivative of the Fatou coordinate at infinity; see \eqref{eq:Psidashisnice} below. Suppose that $w \in W'$. Define a map $\Psi_w : B(0, 2) \to \C$ by
\begin{equation}
\label{eq:Psidef}
\Psi_w(a) \defeq \frac{\Phi(w + a) - \Phi(w)}{\Phi'(w)}.
\end{equation}

Then $\Psi_w$ is a conformal map on $B(0,2)$ such that $\Psi_w(0) = 0$ and $\Psi'_w(0) = 1$. An application of the Koebe distortion theorem gives that
\begin{equation}
\label{eq:distortion}
\frac{4|a|}{(2 + |a|)^2} \leq |\Psi_w(a)| \leq \frac{4|a|}{(2 - |a|)^2}, \quad\text{for } a \in B(0, 2).
\end{equation}

By \eqref{eq:Gbound}, we have that $3/4 < |G(w) - w| < 5/4$. Substituting $a = G(w) - w$ in \eqref{eq:Psidef} and \eqref{eq:distortion} then gives that
\[
\frac{49}{169} < \frac{|\Phi(G(w)) - \Phi(w)|}{|\Phi'(w)|} < \frac{80}{9}, \quad\text{for } w \in W'.
\]
Hence, by \eqref{eq:Phiisnice}, we deduce that
\begin{equation}
\label{eq:Psidashisnice}
\frac{9}{80} < |\Phi'(w)| < \frac{169}{48}, \quad\text{for } w \in W' \cap \kappa(P \cap g^{-1}(P)).
\end{equation}
This is the required estimate on the derivative of $\Phi$.

We now use the estimate \eqref{eq:Psidashisnice} to prove our result. 
Suppose that $n \in \N$, and also that $z \in N$ is such that $z_j \defeq g^j(z) \in A$, for $0 \leq j \leq n$. Let $w = \kappa(z)$. By the choice of $r_0$, and by assumption, we have $w \in W' \cap \bigcap_{j=0}^n G^{-j}(H)$. 
It follows by \eqref{eq:Phiderivisnice} and \eqref{eq:Psidashisnice} that
\[
|(G^n)'(w)| = \frac{|\Phi'(w)|}{|\Phi'(G^n(w))|} > \frac{9}{80} \cdot \frac{48}{169} = \frac{27}{845}.
\]
By the definitions \eqref{eq:kappadef} and \eqref{eq:Gdef}, we conclude that
\[
|(g^n)'(z)| > \frac{27}{845} |z|^{-(1+p)}|z_n|^{1+p}.
\]
This establishes~\eqref{eqn:shrinking}. 
Furthermore, it follows from \eqref{eq:Gbound} that
\[
|G^n(w) - w - n| < n/4.
\]
Since $g^n(z) \in A$, we have Re $G^n(w) < 0$, and deduce that
\[
\operatorname{Re}(w) = \operatorname{Re}\left(-\frac{1}{paz^p}\right) < -3n/4,
\]
and hence
\[
\frac{1}{|z|} > \left(\frac{3}{4}p|a|\right)^{1/p} n^{1/p},
\]
as desired.
\end{proof}
We can combine the above results as follows, to obtain a statement for 
 a global entire function having (possibly several) multiple fixed points. For reasons
 that will become apparent later, it will be convenient to measure expansion
 near the parabolic points with respect to a metric whose density is given by
    \begin{equation}\label{eq:omegadef}
       \omega(z) = \omega_s(z) \defeq \dpar(z)^{-s}, 
    \end{equation}
   where $s$ is a parameter, $0<s<1$. 
\begin{proposition}[Behaviour near multiple fixed points]
\label{prop:petal}
Suppose that $\f$ is a transcendental entire function with finitely many parabolic points, all of which are multiple fixed points. Let $s\in (0,1)$. Then there exist $\rmin > 0$, $K>0$,
$\ell> 1$ and $\tau>1$ with the following properties. 
 Consider a thin repelling sector $A=A_{\vv}(\rmin)$, for a repelling vector
   $\vv$ at a multiple fixed point $\zeta$. Then 
\begin{enumerate}[(a)]
\item We have that \label{petal:fbigger}
\[
|\f(z) - \zeta| > |z - \zeta| \ \text{ and } \ \frac{\omega(\f(z))}{\omega(z)} \cdot |\f'(z)| > 1, \quad\text{for } z \in A.
\]
\item Suppose that $z \in A$ is such that $\f(z)$ lies in a thin repelling sector of radius 
 $\rmin$. Then $\f(z) \in A$.\label{petal:forwards}
\item Suppose that $n \in \N$, and $z \in A$ are such that $z_j \defeq \f^j(z) \in A$, for $0 \leq j \leq n$. Then \label{petal:derivs}
\[
\frac{\omega(z_n)}{\omega(z)} \cdot |(\f^{n})'(z)| > K |z_{n} - \zeta|^{\ell} n^{\tau}.
\] 
\end{enumerate}
\end{proposition}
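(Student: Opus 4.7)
The plan is to reduce the three statements to the local Propositions~\ref{prop:wbpetal} and~\ref{prop:nearparabolic} applied at each multiple fixed point $\zeta$ (after translating $\zeta$ to the origin), and to combine the resulting constants into uniform ones. Since $\Par(\f)$ is finite and each $\zeta$ carries only finitely many repelling vectors, uniformity is a matter of taking suitable maxima and minima; the substantive issue is that the weight $\omega(z) = \dpar(z)^{-s}$ must, on each thin repelling sector, agree with $|\cdot-\zeta|^{-s}$.

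I would begin by choosing $\rmin$ small enough that, for every thin repelling sector $A = A_{\vv}(\rmin)$ at every multiple fixed point $\zeta$: (i)~Propositions~\ref{prop:wbpetal} and~\ref{prop:nearparabolic} apply; (ii)~$|\f(z)-\zeta|\leq 2|z-\zeta|$ on $A$, which is possible because $\f(z)-\zeta = (z-\zeta) + O((z-\zeta)^{p+1})$; and (iii)~$3\rmin$ is strictly smaller than the minimum distance between any two distinct points of $\Par(\f)$. Then~(ii) and~(iii) together guarantee that $\dpar(z) = |z-\zeta|$ and $\dpar(\f(z)) = |\f(z)-\zeta|$ for every $z\in A$.

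Part~\ref{petal:fbigger} is then immediate: the first inequality is Proposition~\ref{prop:wbpetal}, and multiplying its derivative estimate by $\omega(\f(z))/\omega(z) = (|z-\zeta|/|\f(z)-\zeta|)^{s}$ reduces the second to $(|\f(z)-\zeta|/|z-\zeta|)^{1-s}>1$, which holds as $s<1$. For part~\ref{petal:forwards}, suppose $\f(z)\in A_{\vv'}(\rmin)$ at some parabolic point $\zeta'$; then~(ii) gives $|\f(z)-\zeta|\leq 2\rmin$, hence $|\zeta'-\zeta|<3\rmin$, which by~(iii) forces $\zeta'=\zeta$, and the final clause of Proposition~\ref{prop:wbpetal} then yields $\vv'=\vv$ and thus $\f(z)\in A$.

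For part~\ref{petal:derivs}, applying Proposition~\ref{prop:nearparabolic} at $\zeta$ yields $|z-\zeta|<C_1 n^{-1/p}$ and $|(\f^n)'(z)|>C_2|z-\zeta|^{-(1+p)}|z_n-\zeta|^{1+p}$. Multiplying by $\omega(z_n)/\omega(z) = (|z-\zeta|/|z_n-\zeta|)^{s}$ and substituting the first estimate gives a lower bound of order $n^{(1+p-s)/p}|z_n-\zeta|^{1+p-s}$. For this fixed point one may then take $\ell_\zeta \defeq 1+p-s>1$ and $\tau_\zeta \defeq 1+(1-s)/p>1$; to unify across the finitely many parabolic points, set $\ell\defeq\max_\zeta\ell_\zeta$ and $\tau\defeq\min_\zeta\tau_\zeta$, absorbing the resulting constants into $K$ (using $|z_n-\zeta|<\rmin<1$). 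The main obstacle here is this final unification: it is exactly what forces the restriction $s<1$ in~\eqref{eq:omegadef}, since $s<1$ is precisely what makes $\tau_\zeta>1$ for every parabolic multiplicity.
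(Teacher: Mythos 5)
Your proof is correct and follows the same route as the paper's: reduce to the local Propositions~\ref{prop:wbpetal} and~\ref{prop:nearparabolic} at each parabolic point and repelling direction, identify $\omega$ with $|\cdot-\zeta|^{-s}$ on the relevant sectors, and unify the constants over the finite set by taking extremes; your exponents $\ell = P+1-s$ and $\tau = 1+(1-s)/P$ in part~(c) and your deduction of part~(b) via the triangle inequality and the final clause of Proposition~\ref{prop:wbpetal} match the paper's. One small imprecision: your conditions (ii) and (iii) give $|\f(z)-\zeta|<2\rmin$ and $|\f(z)-\zeta'|>\rmin$ for $\zeta'\neq\zeta$, which does not by itself yield $\dpar(\f(z))=|\f(z)-\zeta|$; either strengthen~(iii) to $4\rmin$, or note that the equality is not needed, since the trivial inequality $\dpar(\f(z))\leq|\f(z)-\zeta|$ already gives $\omega(\f(z))\geq|\f(z)-\zeta|^{-s}$, which is the direction used.
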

\begin{remark}
 We remark that only the values $K, \ell$ and $\tau$ depend on the choice of $s$. 
\end{remark}
\begin{proof}[Proof of Proposition~\ref{prop:petal}]
 The proposition follows easily by applying the preceding results separately at each
   of the finitely many multiple fixed points of $\f$, and its finitely many
   repelling directions. Note that~\ref{petal:fbigger}
    follows from Proposition~\ref{prop:wbpetal}, since 
      \[ \frac{\omega(\f(z))}{\omega(z)} = \frac{\lvert z - \zeta\rvert^s}{\lvert \f(z) -\zeta\rvert^s} > \frac{\lvert z - \zeta\rvert}{\lvert f(z)-\zeta\rvert}
            \]
           for the parabolic point $\zeta$ closest to $z$, 
           provided $\rmin$ was chosen sufficiently small. Similarly,~\ref{petal:derivs} 
           follows from Proposition~\ref{prop:nearparabolic}. Indeed, let us set 
           $w_n \defeq z_n-\zeta$ and $w \defeq z- \zeta$, with $\zeta$ as above. Then 
 \begin{align*}
  \frac{\omega(z_n)}{\omega(z)} \cdot |(\f^{n})'(z)|  &>
      C_2 \cdot \frac{\lvert w\rvert^s}{\lvert w_n \rvert^s} \cdot
         \lvert w\rvert ^{-(1+p)}\cdot \lvert w_n\rvert^{1+p}
         \\ &= C_2 \cdot \lvert w\rvert^{-(p+1-s)} \cdot \lvert w_n\rvert^{p+1-s}
          > C_1 \cdot C_2 \cdot n^{-\left(1+\frac{1-s}{p}\right)} \cdot 
          \lvert w_n\rvert^{p+1-s},
        \end{align*}
   where $p+1$ is the multiplicity of the fixed point $\zeta$. So the claim follows with 
   $\tau = 1 + (1-s)/P>1$ and $\ell = P+1-s>1$, where $P+1$ is the maximal
   multiplicity of a fixed point of $\f$. 
\end{proof}
\section{Riemann orbifolds}
\label{S.orbifolds}
In general, an \emph{orbifold} is a space locally modeled on the quotient of an open subset of $\R^n$ by the linear action of a finite group; see \cite[\S 13]{thurston}. In this paper we will use the following definition, which is rather more specialised.
\begin{defn}
An \emph{orbifold} is a pair $(S,v)$, where $S \subset \Ch$ is a Riemann surface and $v : S \to \N$ a map such
that $\{ z \in S : v(z) > 1 \}$ is a discrete set. The map $v$ is called the \emph{ramification map}. A point $z \in S$ such that $v(z) > 1$ is called a \emph{ramified point}. 
\end{defn}
Note that $S$ may be disconnected, in which case properties such as the type of the surface are understood component by component.

Suppose that $\tilde{S}, S$ are Riemann surfaces, and that $f : \tilde{S} \to S$ is holomorphic. The map $f$ is called a \emph{branched covering} if each point of $S$ has a neighbourhood $U$ with the property that $f$ maps each component of $f^{-1}(U)$ onto $U$ as a proper map. We wish to define holomorphic and covering maps of orbifolds. This requires the following definitions. 
\begin{defn} 
Suppose that $f : \tilde{S} \to S$ is a holomorphic map of Riemann surfaces. If $w \in \tilde{S}$, then the \emph{local degree} of $f$ at $w$, which we denote by deg$(f,w)$, is the value $n \in \N$ such that, in suitable local coordinates,
\[
f(z) = f(w) + a(z - w)^n + O(|z-w|^{n+1}), 
\]
where $a \in \C \setminus \{0\}$. In particular, $w$ is a critical point of $f$ if and only if deg$(f,w) > 1$.
\end{defn}

\begin{defn}
Suppose that $f : \tilde{S} \to S$ is a holomorphic map of Riemann surfaces, and that $\tilde{\Orb} = (\tilde{S}, \tilde{v})$ and $\Orb = (S, v)$ are orbifolds.
\begin{itemize}
\item The map $f : \tilde{\Orb} \to \Orb$ is \emph{holomorphic} if
\[
v(f(z)) \text{ divides } \operatorname{deg}(f,z) \cdot \tilde{v}(z), \quad\text{for } z \in \tilde{S}.
\] 
\item The map $f : \tilde{\Orb} \to \Orb$ is an \emph{orbifold covering} if $f : \tilde{S} \to S$ is a branched covering, and
\[
v(f(z)) = \operatorname{deg}(f,z) \cdot \tilde{v}(z), \quad\text{for } z \in \tilde{S}.
\]
\item If $f$ is an orbifold covering and $\tilde{S}$ is simply connected, then we call $\tilde{\Orb}$ a \emph{universal covering orbifold} of $\Orb$.
\end{itemize}
\end{defn}

Every Riemann surface has a universal cover that is conformally equivalent to either $\Ch, \C$ or $\D$. The same is true for almost all orbifolds; see \cite[Theorem A2]{McMullenbook}. With two exceptions (which do not occur in this paper) each orbifold has a universal cover whose underlying surface is either $\Ch, \C$ or $\D$. In these cases we say the orbifold is \emph{elliptic, parabolic} or \emph{hyperbolic} respectively. All orbifolds considered in this paper are hyperbolic, so we restrict to this
case.


Denote by $\rho_\D (z)|\deriv z|$ the unique complete conformal metric of constant curvature $-1$. Since this metric is invariant under conformal automorphisms, it descends to a well-defined metric on $\Orb$. We call this metric the \emph{orbifold metric}, and denote it by $\rho_\Orb(w)|\deriv w|$. For simplicity we will omit the $|\deriv w|$, but still refer to $\rho_\Orb$ as the metric. If $S$ is hyperbolic, $v$ is identically one on $S$, and $O=(S, v)$, then the usual universal cover of $S$ as a
Riemann surface is also a holomorphic covering map of orbifolds, and hence $\rho_{\Orb}$ is identical to the hyperbolic metric in $S$. On the other hand,  
$\rho_\Orb(z)$ becomes infinite at any ramified point.


The well-known Pick Theorem for hyperbolic surfaces generalizes to hyperbolic orbifolds \cite[Proposition 17.4]{thurston2}.
\begin{proposition}
\label{prop:dist}
Suppose that $f : \tilde{\Orb} \to \Orb$ is a holomorphic map between hyperbolic orbifolds. Then distances, as measured in the hyperbolic orbifold metric, are strictly decreased, unless $f$ is an orbifold covering map in which case $f$ is a local isometry.
\end{proposition}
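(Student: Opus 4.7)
The strategy is to reduce to the classical Schwarz--Pick lemma on $\D$ via the universal covering orbifolds. Let $\pi \colon \D \to \Orb$ and $\tilde\pi \colon \D \to \tilde\Orb$ be universal covering orbifold maps (these exist because the orbifolds are hyperbolic). By the very definition of the orbifold metric as the push-forward of the hyperbolic metric on $\D$, both $\pi$ and $\tilde\pi$ are local isometries between the hyperbolic metric on $\D$ and the respective orbifold metrics. Hence proving contraction (or isometry) for $f$ in the orbifold metric reduces to proving the corresponding statement on $\D$ for a suitable lift of $f$.

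The crucial step is to construct a holomorphic lift $F \colon \D \to \D$ satisfying $\pi \circ F = f \circ \tilde\pi$. Since $\D$ is simply connected, the existence of $F$ will follow from the standard lifting theorem for branched covers applied to $\pi$, provided the local degree of $f \circ \tilde\pi$ at each $\tilde z \in \D$ is divisible by the local degree of $\pi$ at the corresponding point of $\D$, which equals $v(f(\tilde\pi(\tilde z)))$. Writing $z = \tilde\pi(\tilde z)$ and using the orbifold covering property $\deg(\tilde\pi, \tilde z) = \tilde v(z)$, the local degree in question is $\deg(f,z)\cdot \tilde v(z)$, and the required divisibility by $v(f(z))$ is exactly the definition of $f$ being a holomorphic orbifold map.

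With $F$ in hand, the classical Schwarz--Pick lemma gives the dichotomy: either $F$ strictly contracts the hyperbolic metric on $\D$, or $F$ is a conformal automorphism of $\D$ and hence a hyperbolic isometry. Transferring through the local isometries $\pi$ and $\tilde\pi$ immediately yields the desired dichotomy for $f$. In the isometry case, I still need to verify that $f$ is an orbifold covering: since $F$ is a biholomorphism of $\D$ and both $\pi, \tilde\pi$ are orbifold coverings, the identity $\pi \circ F = f \circ \tilde\pi$ forces $f \colon \tilde S \to S$ to be a proper branched cover in the ordinary sense and upgrades the divisibility in the holomorphicity condition to the equality $\deg(f,z)\cdot \tilde v(z) = v(f(z))$ required by the definition of orbifold covering.

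The main obstacle is the lifting step: because $\pi$ is ramified at the points where $v > 1$, it is not a topological covering map there, so the classical covering-space lifting theorem does not apply directly. One must instead use the branched-cover version of the lifting theorem, where the orbifold holomorphicity of $f$ supplies precisely the local degree divisibility needed to rule out branched pre-images off the covering. Once the lift $F$ exists and is holomorphic, the remainder of the argument is bookkeeping.
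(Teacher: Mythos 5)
The paper does not prove this proposition; it is stated as a known generalisation of Pick's theorem to hyperbolic orbifolds and cited to Thurston's notes \cite[Proposition 17.4]{thurston2}. Your proposal reconstructs the standard argument behind that citation, and it is essentially correct: lift to the universal covering orbifolds, apply the classical Schwarz--Pick lemma on $\D$, and push the dichotomy back down through the local isometries $\pi$ and $\tilde\pi$.

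Two places in your sketch deserve slightly more care if this were to be written out in full. First, the existence of the lift $F$ is the real content, and the divisibility of local degrees is only the \emph{local} obstruction; one must also check the global (monodromy) obstruction, namely that $(f\circ\tilde\pi)_*$ carries $\pi_1$ of the complement of the ramified preimages in $\D$ into the image of $\pi_*$, or equivalently that $f\circ\tilde\pi$ induces the trivial homomorphism on orbifold fundamental groups. The local-degree divisibility supplied by orbifold holomorphicity is exactly what makes small loops around ramified preimages map into the kernel, so the global obstruction does vanish, but this step should be stated; the lift is then defined away from a discrete set and extended by removable singularities. Second, in the equality case the assertion that $f:\tilde S\to S$ is a branched covering does not follow merely from $\pi\circ F=f\circ\tilde\pi$ by pointwise bookkeeping; the clean way to see it is via deck groups: if $\tilde\Gamma$ and $\Gamma$ are the deck groups of $\tilde\pi$ and of $\pi\circ F$ respectively, then the identity $\pi\circ F=f\circ\tilde\pi$ forces $\tilde\Gamma\subset\Gamma$, so $f$ is the induced quotient map $\D/\tilde\Gamma\to\D/\Gamma$, which is an orbifold covering. (One should also note, harmlessly, that the orbifolds here may be disconnected, so the whole argument is to be carried out component by component.)
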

The next result follows by applying Proposition~\ref{prop:dist} to the inclusion map.
\begin{corollary}
\label{corr:dist}
Suppose that $\tilde{\Orb} = (\tilde{S}, \tilde{v}), \Orb=(S,v)$ are hyperbolic orbifolds with metrics $\rho_{\tilde{\Orb}}$ and $\rho_{\Orb}$ respectively. Suppose also that $\tilde{S} \subset S$, and that the inclusion $\tilde{\Orb} \hookrightarrow \Orb$ is holomorphic.
Then
\[
\rho_{\tilde{\Orb}}(z) \geq \rho_{\Orb}(z), \qfor z \in \tilde{S}.
\]
\end{corollary}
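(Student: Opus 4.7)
The plan is to deduce the pointwise density inequality from the distance-decreasing statement of Proposition~\ref{prop:dist} applied to the inclusion map $\iota \colon \tilde{\Orb} \hookrightarrow \Orb$. By hypothesis $\iota$ is holomorphic in the sense of orbifolds, so Proposition~\ref{prop:dist} applies directly and gives that $\iota$ does not increase hyperbolic distances: either $\iota$ is an orbifold covering (in which case it is a local isometry), or distances are strictly decreased. In both cases we have
\[
d_{\Orb}(z_1,z_2) \;\leq\; d_{\tilde{\Orb}}(z_1,z_2) \qquad \text{for all } z_1,z_2\in \tilde{S}.
\]

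The second step is to pass from this global distance inequality to the infinitesimal statement about metric densities. Fix $z\in\tilde{S}$, and consider nearby points $z$ and $z+h$ with $h\in\C$ small enough that $z+h\in\tilde{S}$ and that $z$ is not a ramified point of either orbifold (the ramified set is discrete, so the inequality at unramified points extends by continuity, and at ramified points both densities are infinite and the inequality is trivial). Dividing both sides by $|h|$ and letting $h\to 0$, the ratio $d_{\Orb}(z,z+h)/|h|$ tends to $\rho_{\Orb}(z)$, and likewise for $\tilde{\Orb}$; hence
\[
\rho_{\Orb}(z) \;\leq\; \rho_{\tilde{\Orb}}(z),
\]
which is what was claimed.

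There is really no obstacle here, since the work has already been done in Proposition~\ref{prop:dist}; the only mild care needed is the infinitesimal passage at points where the ramification functions take different values, which is handled by the observation that away from the (discrete) ramified set both $\rho_{\tilde\Orb}$ and $\rho_{\Orb}$ are smooth conformal metrics whose local length integrals recover the associated distance functions, and at ramified points the claim is vacuous.
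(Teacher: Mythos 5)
Your proposal is correct and follows the same route as the paper, which proves the corollary in one line by applying Proposition~\ref{prop:dist} to the inclusion map; you simply spell out the (standard) passage from the global distance inequality to the pointwise density inequality, and your treatment of the ramified set is sound, though slightly imprecise: holomorphicity of the inclusion forces $v(z)\mid\tilde v(z)$, so a point ramified in $\Orb$ is automatically ramified in $\tilde\Orb$, while a point ramified only in $\tilde\Orb$ makes $\rho_{\tilde\Orb}(z)=\infty$ and the inequality is again trivial (rather than ``both densities infinite'' as you wrote). No gap; this matches the paper's argument.
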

We also use the following observation, which gives us a form of expansion for certain orbifold coverings; this is \cite[Proposition 3.1]{semiconjugacies}.
\begin{proposition}
\label{prop:expansion}
Suppose that $\tilde{\Orb}, \Orb$ are hyperbolic orbifolds such that $\tilde{\Orb} \subset \Orb$, and with metrics $\rho_{\tilde{\Orb}}$ and $\rho_{\Orb}$ respectively. Suppose that $f : \tilde{\Orb} \to \Orb$ is an orbifold covering map, and that the inclusion $\tilde{\Orb} \hookrightarrow \Orb$ is holomorphic but not an orbifold covering. Then
\[
\|\Deriv f(z)\|_{\Orb} \defeq |f'(z)| \frac{\rho_\Orb(f(z))}{\rho_\Orb(z)} > 1, \qfor z \in \tilde{\Orb}.
\]
\end{proposition}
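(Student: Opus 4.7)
The plan is to deduce the bound by composing two applications of the generalised Schwarz--Pick principle from Proposition~\ref{prop:dist}. The key observation is that the quantity $\|\Deriv f(z)\|_{\Orb}$ can be rewritten as a product of two factors, each of which is controlled by Proposition~\ref{prop:dist} applied to a different holomorphic map between hyperbolic orbifolds.

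First, I would apply Proposition~\ref{prop:dist} to the orbifold covering map $f\colon \tilde{\Orb} \to \Orb$. Since $f$ is an orbifold covering between hyperbolic orbifolds, it is a local isometry, so
\[
|f'(z)|\,\rho_{\Orb}(f(z)) \;=\; \rho_{\tilde{\Orb}}(z), \qfor z \in \tilde{\Orb}.
\]
Next, I would apply Proposition~\ref{prop:dist} to the inclusion $\iota\colon \tilde{\Orb}\hookrightarrow \Orb$, which by assumption is holomorphic but not an orbifold covering. Since $\iota'(z)=1$, this gives
\[
\rho_{\Orb}(z) < \rho_{\tilde{\Orb}}(z), \qfor z \in \tilde{\Orb}.
\]
Combining these two relations yields
\[
\|\Deriv f(z)\|_{\Orb} \;=\; |f'(z)|\,\frac{\rho_{\Orb}(f(z))}{\rho_{\Orb}(z)} \;=\; \frac{\rho_{\tilde{\Orb}}(z)}{\rho_{\Orb}(z)} \;>\; 1,
\]
which is the desired inequality.

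The only subtle point, and the step I would expect to cause the most trouble, is passing from the distance-decreasing statement of Proposition~\ref{prop:dist} to the strict pointwise inequality $\rho_{\Orb}(z) < \rho_{\tilde{\Orb}}(z)$ for the inclusion. One first lifts $\iota$ to a map between universal covers, which is a holomorphic self-map of $\D$; the classical Schwarz--Pick lemma then gives the pointwise inequality $|\iota'(z)|\rho_{\Orb}(\iota(z))\leq \rho_{\tilde{\Orb}}(z)$, with equality at a single point forcing the lift to be a conformal automorphism of $\D$, and hence forcing $\iota$ itself to be an orbifold covering. Since $\iota$ is not a covering, strict inequality holds at every point of $\tilde{\Orb}$, which is precisely what is needed to conclude. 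Apart from this, the argument is a direct two-line combination of the two instances of Proposition~\ref{prop:dist}.
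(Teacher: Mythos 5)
The paper does not give a proof of this proposition at all: it simply quotes it as \cite[Proposition~3.1]{semiconjugacies}. Your reconstruction is correct and is the standard two-step Schwarz--Pick argument one would expect to find there: the orbifold covering $f$ gives $|f'(z)|\rho_{\Orb}(f(z))=\rho_{\tilde{\Orb}}(z)$ by local isometry, the non-covering holomorphic inclusion gives $\rho_{\Orb}(z)<\rho_{\tilde{\Orb}}(z)$, and dividing gives the claim. You also correctly flag the one genuine subtlety, namely that Proposition~\ref{prop:dist} as stated speaks of strictly decreasing \emph{distances}, whereas what is needed is strict pointwise inequality of \emph{densities}; your resolution via lifting the inclusion to the universal covers and invoking the equality case of the classical Schwarz lemma is the right way to close that gap. (The only thing I would add is a remark that, in the orbifold setting, one must choose the lift of the inclusion compatibly with the two covering groups, so that an automorphism of $\D$ upstairs does indeed descend to a covering of orbifolds; this is standard but worth saying if you were writing this out in full.)
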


In order to prove an important expansion result, see Proposition~\ref{prop:uniformexpansion} below, we require two preliminary results. The first 
 is~\cite[Lemma~3.2]{dreadlocks}. 

 \begin{lem}[Preimages in annuli]\label{lem:preimages}
   Let $f$ be an entire transcendental function which is bounded on an unbounded connected set. Let $z_1,z_2\in\C$.
     Then, for all $C>1$, and all sufficiently large $R$, 
      $f^{-1}(\{z_1,z_2\})$ contains a point of modulus between $R/C$ and $C\cdot R$. 
 \end{lem}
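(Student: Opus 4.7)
The plan is to argue by contradiction, converting the avoidance of $\{z_1,z_2\}$ in a large annulus into a uniform Euclidean bound on $|f|$ over that annulus, which then contradicts the transcendence of $f$. We may assume $z_1\neq z_2$, so that $V\defeq \C\setminus\{z_1,z_2\}$ is a hyperbolic Riemann surface. Suppose for contradiction there exist $C>1$ and a sequence $R_n\to\infty$ with $f^{-1}(\{z_1,z_2\})$ disjoint from the annulus $\{R_n/C<|z|<CR_n\}$. Rescaling, set $g_n(w)\defeq f(R_nw)$, so each $g_n$ is a holomorphic map from the fixed annulus $A\defeq \{1/C<|w|<C\}$ into $V$. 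My first goal is to find a point on the unit circle $S\defeq \{|w|=1\}\subset A$ at which $g_n$ takes a uniformly bounded value. Let $M$ bound $|f|$ on the given unbounded connected set $E$; since $E$ is connected and unbounded, the set $\{|z|:z\in E\}$ is an unbounded interval in $[0,\infty)$, so $E$ meets $\{|z|=R_n\}$ once $R_n$ is sufficiently large. Choose $z_n\in E$ with $|z_n|=R_n$ and set $w_n\defeq z_n/R_n\in S$; then $g_n(w_n)=f(z_n)\in\overline{B(0,M)}$.

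Next I would invoke the Schwarz-Pick lemma for the hyperbolic metrics $\rho_A$ on $A$ and $\rho_V$ on $V$. Since $g_n\colon A\to V$ is holomorphic and $S$ has finite $\rho_A$-diameter $D=D(C)$, the image $g_n(S)$ has $\rho_V$-diameter at most $D$; combined with $g_n(w_n)\in\overline{B(0,M)}\cap V$, we see that $g_n(S)$ lies in a single $\rho_V$-ball of radius $D$ centred at a point of $\overline{B(0,M)}\cap V$. The key technical step is that such $\rho_V$-balls are uniformly Euclidean bounded in the choice of centre. This follows from the standard puncture asymptotic $\rho_V(z)\sim 1/\bigl(|z-z_j|\log(1/|z-z_j|)\bigr)$ near each $z_j$, which forces a $\rho_V$-ball of fixed radius about a point near $z_j$ to shrink to $z_j$ in Euclidean terms, together with the fact that $\rho_V$ is locally bounded below away from the punctures. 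This yields a constant $M'>0$ with $|g_n(w)|\leq M'$ for all $w\in S$ and all sufficiently large $n$, so $\max_{|z|=R_n}|f(z)|\leq M'$; since $M(r,f)$ is increasing in $r$, this would bound $f$ on all of $\C$, contradicting the transcendence of $f$ by Liouville's theorem.

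I expect the main obstacle to be precisely this uniformity, since $g_n(w_n)$ may converge to one of the punctures $z_j$ and naively the associated $\rho_V$-ball could escape to infinity in $\C$. The sharp puncture asymptotic of $\rho_V$ is what rescues the argument; concretely, one can pass to a subsequence along which $g_n(w_n)$ converges to some $g_\infty\in\overline{B(0,M)}$, and then treat the two cases $g_\infty\in V$ and $g_\infty\in\{z_1,z_2\}$ separately, each producing a uniform Euclidean bound on $g_n(S)$ for large $n$.
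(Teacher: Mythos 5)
The paper itself does not prove this lemma; it is quoted verbatim as \cite[Lemma~3.2]{dreadlocks}, so there is no in-paper argument to compare against. Judged on its own merits, your proof is correct. The rescaling $g_n(w)=f(R_nw)$ on the fixed annulus $A$ with values in the twice-punctured plane $V$, the Schwarz--Pick bound giving uniform $\rho_V$-diameter of $g_n(S)$, the choice of $w_n$ from the unbounded connected set on which $f$ is bounded, and the final maximum-principle/Liouville contradiction are all sound. Your identification of the genuine technical point — that a $\rho_V$-ball of fixed radius about a point of $\overline{B(0,M)}\cap V$ need not obviously be Euclidean bounded when the centre drifts toward a puncture — is exactly right, and the two-case subsequence argument (limit in $V$: use completeness of $\rho_V$; limit equal to a puncture $z_j$: use $\rho_V(z)\sim 1/\bigl(|z-z_j|\log(1/|z-z_j|)\bigr)$ so the ball is trapped near $z_j$) closes it. Note also that the hypothesis $z_1\neq z_2$ is not optional but necessary (take $f=e^z$, $z_1=z_2=0$, $E$ a left half-plane); reading it into the statement is the correct interpretation.

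Your hyperbolic-metric formulation is essentially equivalent to the normal-family (Montel) argument one would expect in the source: a family omitting two finite values is normal, a limit function cannot be $\equiv\infty$ because $g_n(w_n)$ stays bounded, and local uniform boundedness on the compact circle $S$ gives the same Euclidean bound $M'$. The Schwarz--Pick route makes the quantitative control explicit and self-contained, which is a mild advantage; otherwise the two arguments carry the same content.
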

 \begin{remark}
  If $z$ belongs to the unbounded connected component of 
    $\C\setminus S(f)$, then the conclusion holds even for the preimage
    $f^{-1}(z)$ of the single point $z$; compare the proof of
    \cite[Lemma 5.1]{rigidity}.
 \end{remark}

The second result is a generalisation of statements at the start of the proof of \cite[Theorem 4.1]{semiconjugacies}.
\begin{proposition}
\label{prop:relgrowth}
Let $\Orb = (S, v)$ be a hyperbolic orbifold with $S\subset \C$   
such that $\C\setminus S$ and the set of ramified points in $\Orb$ are both bounded.
Suppose furthermore that $\tilde{\Orb} = (\tilde{S}, \tilde{v})$ is another orbifold with $\tilde{S}\subset\C$, and that
  for some $C>1$ and every sufficiently large $R$ for which 
    $\{R/C<\lvert z\rvert < CR\}\subset \tilde{S}$, this annulus contains 
    a ramified point $z$ with $\tilde{v}(z)$ even. Then 
\[
\frac{\rho_{\tilde{\Orb}}(z)}{\rho_\Orb(z)} \rightarrow \infty \text{ as } z \rightarrow \infty \text{ with } z \in \tilde{S}.
\]
\end{proposition}
\begin{remark}
 It is likely that the condition that $\tilde{v}(z)$ be even can be omitted,
    but it is easy to satisfy in our context, and leads to a simple proof.
\end{remark}
\begin{proof}
The assumption implies that there is a sequence $(z_i)$ with 
  $\lvert z_i\rvert < \lvert z_{i+1}\rvert < C^2 \lvert z_i\rvert$, 
  such that each $z_i$ is either in $\C\setminus \tilde{S}$, or 
  $\tilde{v}(z_i)$ is even.  
Let $\Orb'$ be the orbifold $(\C, v')$, where
\[
v'(z) = 
\begin{cases}
2, \quad\text{ for } z \in \{z_i\}_{i\in\N}, \\
1, \quad\text{ otherwise}.
\end{cases}
\]
Then the inclusion map from $\tilde{\Orb}$ to $\Orb'$ is holomorphic. It follows by \cite[Theorem 4.3]{semiconjugacies}, together with Corollary~\ref{corr:dist}, that 
\[
\frac{1}{\rho_{\tilde{\Orb}}(z)} \leq \frac{1}{\rho_{\Orb'}(z)} = O(\lvert  z\rvert) \text{ as } z \rightarrow \infty.
\]

We next estimate the metric in $\Orb$. By assumption, there is a disc $D$ such that $\C\setminus S\subset D$ and $v(z) = 1$, for $z \notin D$. 
Once again by Corollary~\ref{corr:dist}, we can estimate $\rho_\Orb(z)$ above by the hyperbolic density on $\C \setminus \overline{D}$, which can be 
computed explicitly (see e.g.\ \cite[Example 9.10]{hayman}). It follows that
\[
\rho_\Orb(z) \leq \rho_{\C \setminus{\overline{D}}}(z) = O\left(\frac{1}{|z|\log|z|}\right) \text{ as } z \rightarrow \infty.
\]

Combining these two estimates gives
\[
\frac{\rho_{\Orb}(z)}{\rho_{\tilde{\Orb}}(z)} = O(1/\log|z|) \text{ as } z \rightarrow \infty.\qedhere
\]
\end{proof}
\section{Dynamics of geometrically finite maps}
\label{S.Results}
In this section we give three general results about the dynamical properties of geometrically finite maps. The first is \cite[Proposition 3.1]{HMBthesis}. Here, and throughout this paper, by \emph{Jordan domain} we always mean a simply-connected complementary component of a Jordan curve on the sphere; so a Jordan domain may be bounded or unbounded. Also, if $C \subset \C$ and $f : \C \to \C$, then we define the \emph{forward orbit} of $C$ by $O^+(C) \defeq \bigcup_{n \geq 0} f^n(C)$.
\begin{proposition}[Absorbing domains in attracting Fatou components]
\label{prop:attracting}
Suppose that $f$ is a transcendental entire function, and also that $C \subset \AbsO(f)$ is compact. Then there exist pairwise disjoint 
  bounded Jordan domains $D_1, \ldots, D_n$ with pairwise disjoint closures
such that if $D \defeq \bigcup_{i=1}^n D_i$, then 
$$f(D) \Subset D \Subset \AbsO(f) \quad\text{ and }\quad O^+(C) \Subset D.$$
\end{proposition}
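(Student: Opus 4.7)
My plan is to construct $D$ as a disjoint union of bounded Jordan domains, supported in the finitely many Fatou components that the forward orbit $O^+(C)$ visits, and chosen compatibly under $f$. First, since $C$ is compact and $\AbsO(f)$ is the disjoint union of the (open) basins of the attracting cycles of $f$, only finitely many such cycles $\alpha_1,\dots,\alpha_r$ are relevant. At each periodic point of each cycle, local dynamics (linearisation when the multiplier is nonzero; a direct estimate in the super-attracting case) yields an open disk mapped strictly by $f$ into the next disk around the next cycle point; taking them small and pairwise disjoint gives a finite union $V$ of disjoint Jordan disks with pairwise disjoint closures, $\overline V \Subset \AbsO(f)$, and $f(\overline V) \subset V$. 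Because $V$ absorbs $\AbsO(f)$-orbits, uniform convergence on the compact set $C$ yields $N \in \N$ with $f^N(C) \subset V$. Hence $O^+(C)$ meets only finitely many Fatou components $U_1,\dots,U_m$ of $\AbsO(f)$, and each set
\[
K_i \defeq U_i \cap \Bigl(\overline V \cup \bigcup_{k=0}^{N-1} f^k(C)\Bigr)
\]
is a compact subset of $U_i$ with $f(K_i) \subset K_{\sigma(i)}$, where $\sigma(i)$ denotes the unique index with $f(U_i) \subset U_{\sigma(i)}$.

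I would then construct neighborhoods $D_i \supset K_i$ inside $U_i$, each a finite disjoint union of bounded Jordan domains, by backward induction along $\sigma$. For indices $i$ whose $U_i$ contains a point of some $\alpha_j$ (so that $\sigma$ cycles among these indices), set $D_i$ to be the disk of $V$ lying in $U_i$; by construction $f(D_i) \Subset D_{\sigma(i)}$. For a preperiodic index $i$ with $D_{\sigma(i)}$ already defined, we have $f(K_i) \subset K_{\sigma(i)} \subset D_{\sigma(i)}$, so by continuity of $f$ and compactness of $K_i$ there is an open neighborhood of $K_i$ in $U_i$, with compact closure in $U_i$, that is mapped by $f$ into $D_{\sigma(i)}$ with image compactly contained. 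A standard fact from planar topology --- any compact subset of a planar open set has a neighborhood consisting of finitely many disjoint bounded Jordan domains with pairwise disjoint closures in the open set --- then lets us refine this neighborhood to such a finite union $D_i$, still satisfying $f(D_i) \Subset D_{\sigma(i)}$. Setting $D \defeq \bigcup_i D_i$ yields the required finite disjoint union of bounded Jordan domains with pairwise disjoint closures, with $O^+(C) \subset D \Subset \AbsO(f)$ and $f(D) \Subset D$.

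The main obstacle is the topological lemma just invoked. Fatou components of a transcendental entire function can fail to be simply connected, and $K_i$ itself may surround points of $\C\setminus U_i$, so one cannot simply take a Jordan hull of $K_i$. The lemma is proved by covering $K_i$ with small closed squares from a fine planar grid that lie in $U_i$, filling in bounded complementary components contained in $U_i$, and partitioning any remaining multiply connected pieces into simply connected ones via thin cuts inside $U_i$; the backward induction along $\sigma$ then guarantees that the Jordan domains built at different stages fit together consistently.
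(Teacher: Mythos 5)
The paper does not prove this proposition but quotes it verbatim as \cite[Proposition 3.1]{HMBthesis}, so there is no in-paper proof to compare approaches against; I can only assess the proposal on its own terms.

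There is a genuine gap at the base of your backward induction. For a periodic index $i$ you set $D_i = V \cap U_i$, the linearizing disc; but the set $K_i = U_i \cap \bigl(\overline V \cup \bigcup_{k=0}^{N-1} f^k(C)\bigr)$ may contain points of $\bigcup_{k<N} f^k(C)$ that lie in the periodic component $U_i$ outside $\overline V$, so $K_i \subset D_i$ fails. This breaks the chain $f(K_i) \subset K_{\sigma(i)} \subset D_{\sigma(i)}$ that you invoke in the preperiodic step whenever $\sigma(i)$ is periodic, and, more directly, it breaks the conclusion $O^+(C)\subset D$: a point of $C$ that already lies in a periodic Fatou component but outside the linearizing disc is never absorbed into your $D$. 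A concrete instance is $f$ of disjoint type with connected Fatou component $U$ and $C=\{z_1\}$ with $z_1 \in U \setminus V$; your construction returns $D = V$, which misses $z_1$.

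The repair is not just ``enlarge $D_i$ to a neighbourhood of $K_i$'': the cyclic compatibility $f(\overline{D_{i_j}}) \subset D_{i_{j+1}}$ (indices modulo the period $p$) must hold simultaneously, and one needs to use contraction toward the attracting cycle to create the necessary slack. Concretely, choose $M$ with $f^{pM}(K_{i_j}) \subset V\cap U_{i_j}$ for all $j$ in the cycle; pick, for each $j$, a nested chain of open sets $W_j^{(0)} \Subset W_j^{(1)} \Subset \cdots \Subset W_j^{(pM)}$ with $K_{i_j}\subset W_j^{(0)}$, $\overline{W_j^{(pM)}} \Subset U_{i_j}$, and $f^{pM}\bigl(\overline{W_j^{(pM)}}\bigr) \subset V\cap U_{i_j}$; and set $D_{i_j} = (V\cap U_{i_j}) \cup \bigcup_{l=0}^{pM-1} f^l\bigl(W_{j-l}^{(l)}\bigr)$, subscripts mod $p$. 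One then checks directly that $K_{i_j}\subset D_{i_j}$, that $\overline{D_{i_j}}\Subset U_{i_j}$, and that $f(\overline{D_{i_j}}) \subset D_{i_{j+1}}$ (the critical term $l'=pM$ lands inside $V\cap U_{i_{j+1}}$, which is where the strictness comes from); afterwards one shrinks each $D_{i_j}$ to a finite union of Jordan domains with disjoint closures, taking care to keep $f(\overline{D_{i_{j-1}}})\cup K_{i_j}$ inside the shrunken set so the cyclic compatibility survives the shrinking. With this base case repaired, your preperiodic induction and the planar-topology lemma you invoke go through.
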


We also need the following, which is an analogous result for parabolic cycles. 
\begin{proposition}[Absorbing domains in parabolic Fatou components]
\label{prop:parabolic}
Suppose that $f$ is a transcendental entire function with finitely many parabolic points. Suppose also that $C \subset \ParO(f)$ is compact. Then there exist bounded Jordan domains $D'_1, \ldots, D'_n$ such that if $D' \defeq \bigcup_{i=1}^n D'_i$, then the following all hold.
\begin{enumerate}[(a)]
\item $f(D') \subsetneq D'$. \label{4.2:1}
\item $\partial D' \cap \partial f(D') = \Par(f)$.\label{4.2:4}
\item $\overline{O^+(C)}) \subsetneq D' \cup \Par(f)$.\label{4.2:2}
\item $\overline{D'} \setminus \Par(f) \subset \ParO(f)$.\label{4.2:3}
\item If $z \in D'_j \cap \Par(f)$, then $D'_j$ is an attracting petal for $z$.\label{item:attractingpetal}
\item The sets $D'_j \setminus \Par(f)$ are pairwise disjoint.\label{item:disjointpetals}
\item There exists $r_1>0$ with the following property. If $z \in \C \setminus D'$ is such that $0 < \dpar(z) < r_1$, then $z$ belongs to a thin repelling sector of $f$.\label{4.2:wbpetal}
\end{enumerate}
\end{proposition}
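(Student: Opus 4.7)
The plan is to build $D'$ as a union of attracting petals at the parabolic points together with finitely many auxiliary bounded Jordan domains absorbing the pre-petal portion of the orbit of $C$.

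For the local construction, first consider each parabolic cycle of $f$ of exact period $k$. Writing $(f^k)'(\zeta) = e^{2\pi i p/q}$ in lowest terms, the iterate $g \defeq f^{kq}$ has every cycle point as a multiple fixed point; denote its multiplicity by $p(\zeta)+1$. For each of the $p(\zeta)$ attracting directions $\vv$ of $g$ at $\zeta$, I would apply Proposition~\ref{prop:fatpetals} with opening angle $\alpha \in (2\pi/p(\zeta) - \pi/(4p(\zeta)),\ 2\pi/p(\zeta))$, producing a bounded Jordan petal $P_{\zeta,\vv}$ with $\zeta \in \partial P_{\zeta,\vv}$ and $g(\overline{P_{\zeta,\vv}}) \subset P_{\zeta,\vv}\cup\{\zeta\}$. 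By shrinking the radii sufficiently I then arrange: (i) distinct petals have pairwise disjoint closures modulo $\Par(f)$; (ii) for some $r_1 > 0$ and each $\zeta \in \Par(f)$, the set $B(\zeta, r_1) \setminus (\bigcup_{\vv} P_{\zeta,\vv} \cup \{\zeta\})$ is contained in the union of thin repelling sectors of $f$ at $\zeta$ --- possible since the chosen $\alpha$ leaves angular gaps of width strictly less than $\pi/(4p(\zeta))$ centred on the repelling directions; and (iii) $\overline{P_{\zeta,\vv}}\setminus\{\zeta\} \subset \ParO(f)$, because the orbit of any point in $P_{\zeta,\vv}$ converges to $\zeta$ under iteration of $g$.

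Next I need to match these petals under the single step $f$. Since $f$ commutes with $g = f^{kq}$, it sends attracting directions of $g$ at $\zeta$ bijectively onto those at $f(\zeta)$; call the image of $\vv$ by $\vv'$. After a further small shrinking of the petals one obtains $f(\overline{P_{\zeta,\vv}}) \subset P_{f(\zeta),\vv'} \cup \{f(\zeta)\}$ with compact containment away from $f(\zeta)$. Setting $P^* \defeq \bigcup P_{\zeta,\vv}$ this yields $f(P^*) \subset P^* \cup \Par(f)$ with $\overline{f(P^*)} \cap \partial P^* = \Par(f)$. Every orbit in $\ParO(f)$ eventually enters $P^*$, so by compactness of $C$ there exists $N \in \N$ with $f^N(C) \subset P^*$. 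To absorb the remaining segment $\bigcup_{j=0}^{N-1} f^j(C)$ outside $P^*$, let $\mathcal{V}$ be the finite collection of Fatou components meeting that compact set; each $V \in \mathcal{V}$ is simply connected and, outside $P^*$, its closure is disjoint from $\Par(f)$. Working through $\mathcal{V}$ in decreasing order of the largest $j$ for which $f^j(C) \cap V \neq \emptyset$, I then pick a bounded Jordan domain $D_V \Subset V$ containing all the relevant iterates of $C$ inside $V$ and satisfying $f(\overline{D_V}) \Subset D_{f(V)}$, where $D_{f(V)}$ denotes either a previously-chosen $D_{f(V)}$ or the appropriate petal in $P^*$.

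The proposed set $D' \defeq P^* \cup \bigcup_{V \in \mathcal{V}} D_V$ then has connected components equal to the individual petals $P_{\zeta,\vv}$ (each with one parabolic point on its boundary, yielding~\ref{item:attractingpetal}) together with the auxiliary domains $D_V$ (each compactly contained in a single Fatou component), with pairwise disjoint closures modulo $\Par(f)$, giving~\ref{item:disjointpetals}. Properties~\ref{4.2:1} and~\ref{4.2:4} follow from the strict compact containments above together with the observation that any common boundary point of $D'$ and $f(D')$ must be fixed by $f$ and therefore lies in $\Par(f)$; \ref{4.2:2} is the choice of $N$ combined with the inductive choice of the $D_V$; \ref{4.2:3} is inherited from (iii) and from $D_V \Subset V \subset \ParO(f)$; and~\ref{4.2:wbpetal} is precisely (ii) above. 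The hard part will be the petal-matching step just described: ensuring that petals originally built for the iterate $g$ can be simultaneously tuned so that $f$ itself maps each compactly into its designated neighbour, while still leaving only thin repelling sectors outside $P^*$ near each parabolic point. This is handled by carefully adjusting opening angles and radii around each cycle in coordination.
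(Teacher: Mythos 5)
Your proposal essentially reconstructs, from scratch, the argument that the paper simply defers to Mihaljevi\'c-Brandt's thesis (Proposition~3.2 there), together with the one modification the paper spells out for part~\ref{4.2:wbpetal}: using attracting petals with opening angle $\alpha>7\pi/4p$ so that the remaining angular gaps near each parabolic point lie inside thin repelling sectors. You identify this condition correctly --- your interval $\bigl(2\pi/p-\pi/(4p),\,2\pi/p\bigr)$ is exactly $\bigl(\tfrac{7\pi}{4p},\tfrac{2\pi}{p}\bigr)$ --- so the overall route is the same as the paper's and the same as the cited source.

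Two points should be tightened. First, the claimed ``observation that any common boundary point of $D'$ and $f(D')$ must be fixed by $f$'' is false in general (take $f(z)=z^2$ and $D'=\D$; the unit circle lies in $\partial D'\cap\partial f(D')$ but is not fixed pointwise). What actually yields~\ref{4.2:4} is that the compact containments $f(\overline{D'_j})\Subset D'$ you build in hold away from $\Par(f)$, so boundary contact can only occur at $\Par(f)$; conversely each parabolic point lies on $\partial D'$ and, since $f$ is a local homeomorphism there permuting $\Par(f)$, also on $\partial f(D')$. Second, the petal-matching step you flag as ``the hard part'' and resolve by ``carefully adjusting opening angles and radii around each cycle'' is exactly where the cited reference does real work, and the hand-wave conceals a genuine difficulty: the petals are constructed for the iterate $g=f^{kq}$, and naive choices such as $P_j:=f^j(P_0)$ yield equalities $f(P_j)=P_{j+1}$ rather than the strict, compact containments needed. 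The standard fix is to normalise Fatou coordinates around the cycle so that $\phi_{j+1}\circ f=\phi_j+1/(kq)$ (the freedom to add constants to each $\phi_j$ lets one distribute the total shift of $1$ evenly), and then take each petal to be $\phi_j^{-1}$ of a fixed wide-sector region; the positive shift $1/(kq)$ then gives compact containment modulo the parabolic point. If you want a self-contained proof rather than the paper's citation, this is the step that needs to be written out.
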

\begin{proof}
The proof of this result is almost exactly as the proof of \cite[Proposition 3.2]{HMBthesis}. Indeed, parts \ref{4.2:1}, \ref{4.2:2} and \ref{4.2:3} are already explicitly stated in that result, and parts \ref{4.2:4}, \ref{item:attractingpetal} and~\ref{item:disjointpetals} are implicit in the construction in \cite{HMBthesis}. (It should be noted that in \cite{HMBthesis} the domains are only stated to be simply-connected, but it is easy to see that we can shrink them and obtain Jordan domains with the same property; even Jordan domains whose boundary is analytic except possibly at the parabolic
points.) 

That leaves only part \ref{4.2:wbpetal}. This can be obtained by a very small modification to the proof of \cite[Proposition 3.2]{HMBthesis}. That proof uses \cite[Theorem 10.7]{Milnor} to obtain attracting petals which are contained in the domains $D'_i$ contained in
the immediate attracting basins of the parabolic fixed points. Instead we use Proposition~\ref{prop:fatpetals} (applied to a suitable iterate of $f$) with $\alpha > 7\pi/4p$. Then every point sufficiently close to a parabolic fixed point of $f$ either belongs to one of these petals, or to a thin repelling sector.
\end{proof}
Finally, we use the following properties of the Fatou set of a geometrically finite map; see \cite[Proposition 2.5]{geometricallyfinite}.
\begin{proposition}
\label{prop:Fatou}
Suppose that $f$ is a geometrically finite entire transcendental function. Then the Fatou set of $f$ is either empty, or consists of finitely many attracting and parabolic basins. Furthermore, every periodic cycle in the Julia set is repelling or parabolic. In particular, $P(f)$ is bounded.
\end{proposition}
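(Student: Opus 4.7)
The strategy is to classify periodic Fatou components and periodic cycles via the standard Fatou theory for transcendental entire functions, then to rule out wandering domains, and finally to deduce boundedness of $P(f)$.

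First, I would show that $f\in\B$. By hypothesis $F(f)\cap S(f)$ is compact, and $J(f)\cap S(f)\subset J(f)\cap P(f)$ is finite, so $S(f)$ itself is bounded. The Eremenko--Lyubich inclusion $I(f)\subset J(f)$ then rules out Baker domains. Herman rings do not occur for transcendental entire functions by a classical theorem of Baker. Siegel discs are excluded because the boundary of any Siegel disc is an uncountable subset of $P(f)\cap J(f)$ (it is an uncountable compact subset of $J(f)$ contained in the accumulation of singular orbits), whereas $J(f)\cap P(f)$ is finite. A Cremer cycle would, by the Ma\~n\'e--P\'erez-Marco theorem, lie in the $\omega$-limit of the singular set and hence in the finite set $J(f)\cap P(f)$; moreover nearby distinct singular orbits would accumulate on the cycle, again producing infinitely many postsingular points in $J(f)$. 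Hence every periodic cycle in $J(f)$ is repelling or parabolic, and every periodic Fatou component is an attracting or parabolic basin.

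Next, I would rule out wandering domains; I expect this to be the main obstacle. For $f\in\B$, any limit function of $(f^n|_W)$ on a wandering component $W$ is a finite constant (otherwise $W\subset I(f)\subset J(f)$, a contradiction) and must lie in the derived set of $P(f)$. Using compactness of $F(f)\cap S(f)$ together with finiteness of $J(f)\cap P(f)$, one argues along the lines of the classical no-wandering-domains proofs for the Speiser class (Goldberg--Keen, Eremenko--Lyubich): any such limit constant would either be an isolated point of $P(f)$, which cannot be a non-trivial limit function, or it would force infinitely many postsingular accumulation points in $J(f)$, contradicting geometric finiteness.

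Finally, I would deduce finiteness and boundedness. By Fatou's theorem, each attracting or parabolic cycle has an associated singular value in its immediate basin; since $F(f)\cap S(f)$ is compact and the immediate basins form a pairwise disjoint open family, only finitely many basins can occur, and by the previous step every Fatou component eventually lands in one of them. The orbit of every singular value either eventually enters the finite set $J(f)\cap P(f)$, or remains in such a basin and converges to a bounded periodic cycle; in either case the orbit is bounded. Hence $P(f)$ is bounded, completing the proof.
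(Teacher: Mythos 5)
The paper does not prove this proposition; it cites it directly from Mihaljevi\'c-Brandt \cite[Proposition 2.5]{geometricallyfinite}. There is therefore no internal argument to compare against, and the question is simply whether your sketch is sound.

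Your treatment of periodic components is broadly on the right track: $S(f)=\bigl(F(f)\cap S(f)\bigr)\cup\bigl(J(f)\cap S(f)\bigr)$ is bounded, so $f\in\B$ and $I(f)\subset J(f)$ rules out Baker domains; periodic Fatou components of entire functions are simply connected, so there are no Herman rings; and the boundary of a Siegel disc is a continuum contained in $P(f)\cap J(f)$, which contradicts finiteness. The Cremer case, however, is not as immediate as you present it. The cycle is itself a finite forward-invariant subset of $J(f)\cap P(f)$, so merely lying in the $\omega$-limit of the singular set gives no contradiction. The point is that a Cremer point is a non-isolated point of $P(f)$, and one must then argue that the postsingular points accumulating at it cannot all lie in $F(f)$: an orbit of a singular value contained in $F(f)$ accumulates only on an attracting or parabolic cycle, so the Cremer point would have to be one of these, which is absurd. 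That argument presupposes that Fatou components are attracting or parabolic basins, so the logical ordering of your steps needs to be stated carefully.

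The genuine gap is the exclusion of wandering domains. You appeal to ``the classical no-wandering-domains proofs for the Speiser class (Goldberg--Keen, Eremenko--Lyubich),'' but those arguments rest on $S(f)$ being \emph{finite}, which geometrically finite maps need not satisfy, and wandering domains \emph{do} occur in class $\B$ (Bishop). One must exploit geometric finiteness directly. Concretely: a constant limit $c$ of $f^{n}|_{W}$ lies in $J(f)$ and in the derived set of $P(f)$. Your claim that this ``forces infinitely many postsingular accumulation points in $J(f)$'' is not immediate, because the nearby postsingular points could lie in $F(f)$; to conclude one again uses that those orbits converge to periodic cycles, which forces $c\in\Attr(f)\cup\Par(f)$, hence $c\in\Par(f)$. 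One then still needs a separate argument that a wandering domain cannot converge to a parabolic point (roughly: for large $n$ the component $f^{n}(W)$ must lie in an attracting petal of $c$, hence in the parabolic basin, contradicting wandering; in a repelling direction it would be pushed away). Your sketch acknowledges the wandering-domain step as ``the main obstacle'' but does not actually close it, and the appeal to Speiser-class methods points in the wrong direction.

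Finally, the deduction of boundedness of $P(f)$ is fine once the classification and absence of wandering domains are in place: orbits of singular values in $F(f)$ converge to finitely many bounded cycles, orbits of singular values in $J(f)$ are eventually periodic inside the finite set $J(f)\cap P(f)$, and $S(f)$ is bounded, so $P(f)$ is bounded.
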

\section{Constructing the orbifolds}
\label{S.constructO}
In the next four sections we prove Theorem~\ref{thm:geometricallyfinitedocile}. Throughout these sections $f$ is strongly geometrically finite, and $\f = f^{\n}$ is the smallest iterate such that all the parabolic points of $\f$ are fixed and of multiplier one.
We now define an orbifold $\Orb_f = (S,v)$ associated to $f$. The construction 
generalises that in \cite[Section 3]{semiconjugacies}, 
where there are no
parabolic cycles.

By Proposition~\ref{prop:Fatou}, the Fatou set of $f$ consists of finitely many attracting and periodic cycles. Let $D$ be the set from Proposition~\ref{prop:attracting}, 
applied to $C = S(f) \cap \AbsO(f)$, and let $D'$ be the set from 
Proposition~\ref{prop:parabolic}, applied to $C = S(f) \cap \ParO(f)$. 
The underlying surface of our orbifold is 
   \[ S \defeq\C\setminus \overline{D\cup D'}. \]
   
The set of ramified points is
  precisely $(P(f)\cap J(f))\setminus \Par(f)$, and the ramification of
  $z\in P(f)\cap J(f)$ is given by 
\[
v(z) \defeq 
2\cdot \operatorname{lcm}\{\operatorname{deg}(f^m, w), \text{where } f^m(w) = z \}.
\]
Observe that $v(z)$ is finite for all $z$. Indeed, by assumption
 on $f$ there are only finitely many
 critical values in $J(f)$, and the local degree of $f$ at their preimages is
 uniformly bounded.

\begin{obs}[Properties of $\Orb_f$]
We have
    \begin{equation}
      J(f)\setminus \Par(f) \subset S, \qquad \Par(f)\cap S = \emptyset, 
        \qquad\text{and}\qquad F(f)\cap P(f)\cap S = \emptyset.
    \end{equation}
 Furthermore, $\Orb_f$ is hyperbolic.
\end{obs}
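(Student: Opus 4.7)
My plan is to verify each of the four assertions separately; each should follow in a few lines from the construction of $D$ and $D'$ together with Propositions~\ref{prop:attracting}, \ref{prop:parabolic}, and~\ref{prop:Fatou}.

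For the inclusion $J(f)\setminus\Par(f)\subset S$, I would observe that $\overline{D}\subset\AbsO(f)\subset F(f)$ by Proposition~\ref{prop:attracting}, while Proposition~\ref{prop:parabolic}\ref{4.2:3} gives $\overline{D'}\setminus\Par(f)\subset\ParO(f)\subset F(f)$; hence $\overline{D\cup D'}\cap J(f)\subset\Par(f)$, which is precisely the first claim. The second statement, $\Par(f)\cap S=\emptyset$, is immediate from Proposition~\ref{prop:parabolic}\ref{4.2:4}, which places every parabolic point on $\partial D'\subset\overline{D'}$.

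For $F(f)\cap P(f)\cap S=\emptyset$, the plan is to write $P(f)=\overline{O^+(S(f))}$ and split $S(f)$ as $(S(f)\cap F(f))\cup(S(f)\cap J(f))$. Strong geometric finiteness makes the second piece finite and lying entirely in $J(f)$, so it contributes nothing to $F(f)\cap P(f)$. Forward invariance of $F(f)$ together with Proposition~\ref{prop:Fatou} further decomposes the first piece as $(S(f)\cap\AbsO(f))\cup(S(f)\cap\ParO(f))$; the closures of the resulting forward orbits are contained in $\overline{D}$ and in $D'\cup\Par(f)\subset\overline{D'}$ by Proposition~\ref{prop:attracting} and Proposition~\ref{prop:parabolic}\ref{4.2:2}, respectively, so $F(f)\cap P(f)\subset\overline{D\cup D'}$.

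For hyperbolicity of $\Orb_f$, the underlying surface $S=\C\setminus\overline{D\cup D'}$ is a plane domain whose complement in $\hat{\C}$ contains the nonempty open set $D\cup D'$ together with $\infty$, so $\hat{\C}\setminus S$ has uncountably many points; therefore $S$ is already a hyperbolic Riemann surface. Since strong geometric finiteness makes $(P(f)\cap J(f))\setminus\Par(f)$ finite and each $v(z)$ is a finite integer, the orbifold ramification is supported at finitely many points and only decreases the orbifold Euler characteristic further, so $\Orb_f$ remains hyperbolic. The main obstacle in the whole argument is really the third bullet, where one has to carefully separate singular values in $J(f)$ from those in $F(f)$ and use forward invariance of the Fatou set together with the finiteness provided by strong geometric finiteness; the remaining assertions are essentially an unpacking of the definitions.
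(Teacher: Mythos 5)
Your verifications of the first three assertions are correct and essentially the same as what the paper intends (the paper just calls them ``immediate from the definition''). But your argument for hyperbolicity of $\Orb_f$ has a genuine gap: you claim that $S=\C\setminus\overline{D\cup D'}$ is a hyperbolic Riemann surface because ``$\hat{\C}\setminus S$ has uncountably many points.'' That reasoning presupposes $D\cup D'\ne\emptyset$. However, $D$ and $D'$ are obtained by applying Propositions~\ref{prop:attracting} and~\ref{prop:parabolic} to the compact sets $S(f)\cap\AbsO(f)$ and $S(f)\cap\ParO(f)$, which are both empty whenever $F(f)=\emptyset$. Strongly geometrically finite entire functions with empty Fatou set (e.g.\ suitable postsingularly finite maps with only critical values) do exist, and in that case $S=\C$, which is \emph{not} hyperbolic as a Riemann surface. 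Your follow-up remark that ramification ``only decreases the orbifold Euler characteristic further'' does not rescue the argument either, since $\chi(\C)=1>0$ and you would need to check that the ramification actually drives the orbifold Euler characteristic below zero.

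The paper handles exactly this case separately: when $S=\C$, the set of ramified points is $P(f)\subset J(f)$; one then uses that every transcendental entire function has $\#P(f)\geq 2$, and that every point of $P(f)\cap J(f)$ is, by definition of $v$, the image of a critical point and therefore has $v(z)\geq 4$ (because of the extra factor $2$ in the definition of $v$). The plane marked with two points of ramification $\geq 4$ has negative orbifold Euler characteristic and is hyperbolic, so $\Orb_f$ is hyperbolic by Corollary~\ref{corr:dist}. You should add this case distinction to complete the proof; the case $S\ne\C$ is then handled by your argument.
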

\begin{proof}
 The first three claims are immediate from the definition. 
    If $S\neq \C$, then $S$ is hyperbolic, and therefore $\Orb_f$ is also. 
    Otherwise, the set of orbifold points is $P(f) \subset J(f)$. Observe that 
     $\# P(f) \geq 2$ for any transcendental entire function (see e.g.\ 
       \cite[Proposition~3.1]{dreadlocks}).
     Furthermore, every point $z\in P(f)$ is the iterated image of a critical point,
     and therefore 
     $v(z) \geq 4$ by definition of $v$. The plane with two marked points of
     valence $4$ is hyperbolic (see \cite[Remark~E.6]{Milnor}), and therefore $\Orb_f$ is hyperbolic. 
\end{proof}

\begin{proposition}[Preimage orbifold]\label{prop:orbifold}\label{prop:uniformexpansion}
  There is a unique orbifold $\Orb' = (S',v')$, with
    $S' = f^{-1}(S)$, such that
    $f\colon \Orb' \to \Orb_f$ is an orbifold covering map. 
    
 The inclusion $\Orb'\to \Orb_f$ is holomorphic, but not an orbifold covering. 
    For every $r>0$, there exists $\mu = \mu(r)>1$ such that 
\[
\|\Deriv f(z)\|_{\Orb_f} \geq \mu, \qfor z \in S' \text{ such that } \dpar(z) > r.
\]
\end{proposition}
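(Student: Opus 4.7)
The plan is to first determine $v'$ by the formula forced by the orbifold-covering condition, then invoke Proposition~\ref{prop:expansion} for the pointwise expansion, and finally upgrade to a uniform lower bound off neighbourhoods of $\Par(f)$ by combining Lemma~\ref{lem:preimages} with a compactness argument. For $z\in S'\defeq f^{-1}(S)$ I set $v'(z)\defeq v(f(z))/\deg(f,z)$; the identity $v(f(z)) = \deg(f,z)\cdot v'(z)$ required of an orbifold covering forces this choice, so uniqueness is automatic. To see $v'(z)$ is a positive integer, observe that whenever $v(f(z))>1$ we have $f(z)\in P(f)\cap J(f)\setminus\Par(f)$, so
\[
v(f(z)) = 2\cdot\operatorname{lcm}\{\deg(f^m,w)\colon f^m(w)=f(z)\};
\]
taking $(w,m)=(z,1)$ shows that $\deg(f,z)$ divides this lcm and therefore $v(f(z))$. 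The same computation shows $v'(z)$ is in fact \emph{even} whenever $f(z)$ is ramified in $\Orb_f$, a fact I will use below.

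The inclusion $\Orb'\hookrightarrow\Orb_f$ is holomorphic iff $v(z)\mid v'(z)$ for every $z\in S'$. This is trivial where $v(z)=1$; otherwise both $z$ and $f(z)$ lie in $P(f)\cap J(f)\setminus\Par(f)$, and I would verify the required $v(z)\cdot\deg(f,z)\mid v(f(z))$ by noting that every preimage of $z$ under $f^m$ yields, via composition with $f$, a preimage of $f(z)$ under $f^{m+1}$ of local degree $\deg(f^m,w)\cdot\deg(f,z)$, so its contribution is picked up by the lcm defining $v(f(z))$. The inclusion is \emph{not} an orbifold covering: take any regular preimage $z$ of a ramified point of $\Orb_f$, so that $v(z)=1$ but $v'(z)\geq 2$. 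Proposition~\ref{prop:expansion} then delivers the pointwise bound $\|\Deriv f(z)\|_{\Orb_f}>1$ on $S'$.

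To obtain the uniform constant $\mu(r)$, the key observation is that, since $f\colon\Orb'\to\Orb_f$ is an orbifold covering and hence a local isometry, one has $\rho_{\Orb'}(z)=|f'(z)|\rho_{\Orb_f}(f(z))$, and consequently
\[
\|\Deriv f(z)\|_{\Orb_f} = \frac{\rho_{\Orb'}(z)}{\rho_{\Orb_f}(z)}.
\]
I would control this ratio in two regimes. For large $|z|$ I run the argument of Proposition~\ref{prop:relgrowth}: by Lemma~\ref{lem:preimages} every sufficiently large annulus of fixed modulus contains a preimage of a chosen ramified point of $\Orb_f$, which lies in $S'$ and is ramified in $\Orb'$ with \emph{even} valence by the first paragraph; meanwhile $\C\setminus S = \overline{D\cup D'}$ and the ramified set of $\Orb_f$ are bounded (by construction and by Proposition~\ref{prop:Fatou}), which is all that the proof of Proposition~\ref{prop:relgrowth} actually needs to produce the upper bound $\rho_{\Orb_f}(z) = O(1/(|z|\log|z|))$ and the lower bound $\rho_{\Orb'}(z) \gtrsim 1/|z|$. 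This yields $\rho_{\Orb'}(z)/\rho_{\Orb_f}(z)\to\infty$ as $z\to\infty$ in $S'$. For bounded $z$ with $\dpar(z)>r$, I would combine continuity of the ratio with the strict pointwise inequality on the compact set $\{r\leq\dpar(z),\ |z|\leq R\}\cap\overline{S'}$. The main obstacle is the behaviour at $\partial S'$ inside this set: away from $\Par(f)$ one finds either points $w\in\partial S'\cap S$, where $\rho_{\Orb_f}(w)$ is finite while $\rho_{\Orb'}(z_n)\to\infty$ as $z_n\to w$ in $S'$, or points $w\in\partial S'\cap\partial S$, where a direct comparison via the identity above, together with the boundary growth of $\rho_{\Orb_f}\circ f$, again forces the quotient to infinity. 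The compactness argument then delivers the required $\mu(r)>1$.
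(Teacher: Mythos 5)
Your proposal follows the same overall plan as the paper: define $v'$ by the forced formula $v'(z)=v(f(z))/\deg(f,z)$, verify integrality, evenness and the divisibility relation $v(z)\cdot\deg(f,z)\mid v(f(z))$ for holomorphicity of the inclusion, invoke Proposition~\ref{prop:expansion} for the pointwise bound, use Lemma~\ref{lem:preimages} to verify the hypotheses of Proposition~\ref{prop:relgrowth} for the blow-up at infinity, and finish with compactness away from $\Par(f)$. The paper works with $S'$ in the proof exactly as you do.

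There is, however, a genuine (if easily fixable) gap in your argument that the inclusion $\Orb'\hookrightarrow\Orb_f$ is not an orbifold covering. You "take any regular preimage $z$ of a ramified point of $\Orb_f$." But the ramified set of $\Orb_f$ is $(P(f)\cap J(f))\setminus\Par(f)$, which may be empty: this happens precisely when $f$ is parabolic in the sense used later in Section~\ref{S.examples}, e.g.\ for $\sin z$ and $e^{z-1}$, which are the paper's headline examples. In that case your chosen $z$ does not exist and the argument stalls. The paper avoids this by instead deducing "not an orbifold covering" from the divergence $\rho_{\Orb'}(z)/\rho_{\Orb_f}(z)\to\infty$ established via Proposition~\ref{prop:relgrowth} (if the inclusion were a covering, Proposition~\ref{prop:dist} would make it a local isometry, forcing the ratio to be identically one). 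A cleaner and fully general alternative, which you could also use, is to observe that $S'=f^{-1}(S)\subsetneq S$ (since $f^{-1}(\overline{D\cup D'})$ is unbounded while $\overline{D\cup D'}$ is not), so the inclusion is not even a branched covering of the underlying surfaces and a fortiori not an orbifold covering.

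A smaller point: in your compactness step you worry about limit points $w\in\partial S'\cap\partial S$ and propose an argument via "boundary growth of $\rho_{\Orb_f}\circ f$." But $\partial S'\cap\partial S=\Par(f)$ (this follows from $f(\overline D)\Subset D$ together with Proposition~\ref{prop:parabolic}\ref{4.2:4}), and $\Par(f)$ is excluded by the constraint $\dpar(z)>r$. So this case is vacuous and need not be addressed; the only boundary behaviour you must control in the relevant compact set is $w\in\partial S'\cap S$, where $\rho_{\Orb'}(z)\to\infty$ while $\rho_{\Orb_f}$ stays bounded, as you correctly note.
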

\begin{proof}
  The orbifold $\Orb'$ is defined by the ramification index 
  \begin{equation}
\label{eq:tildevdef1}
v'(z) \defeq \frac{v(f(z))}{\operatorname{deg}(f,z)}, \qfor z \in S'.
\end{equation}
  Note that, if $f(z)$ is a ramified point of $\Orb_f$, then 
    $v'(z)$ is an even integer by definition of $v(f(z))$. Since
    $f\colon S'\to S$ is a branched covering, 
    $f\colon \Orb' \to \Orb_f$ is an orbifold covering map. 
    
 Suppose that $z \in S'$. The definition of $v$, together with the fact that 
\[
\operatorname{deg}(f^m, w) = \operatorname{deg}(f, w) \cdot \operatorname{deg}(f, f(w)) \cdot \ldots \cdot \operatorname{deg}(f, f^{m-1}(w))
\]
implies that the product $v(z) \cdot \operatorname{deg}(f, z)$ divides $v(f(z))$. It then follows by \eqref{eq:tildevdef1} that $v(z)$ divides $v'(z)$, which proves that the inclusion is a holomorphic map.

By Lemma~\ref{lem:preimages}, the orbifolds $\Orb'$ and $\Orb_f$ satisfy the
  hypotheses of Proposition~\ref{prop:relgrowth}. So
     \begin{equation}\label{eqn:derivtoinfinity}
        \|\Deriv f(z)\|_{\Orb_f} = \frac{\rho_{\Orb'}(z)}{\rho_{\Orb_f}(z)} \to \infty 
      \end{equation}  
     as $z\to\infty$ with $z\in S'$. In particular, the inclusion of
     $\Orb'$ into $\Orb_f$ is not an orbifold covering, and we have
        \[ \|\Deriv f(z)\|_{\Orb_f} > 1 \]
      for all $z\in S'$.  This derivative tends to $\infty$ as 
        $z\to \partial S'\setminus \partial S$, and 
        $\partial S'\cap \partial S = \Par(f)$. The claim thus follows
        from~\eqref{eqn:derivtoinfinity}. 
\end{proof}

The orbifolds $\Orb \defeq \Orb_f = (S,v)$ and $\Orb' = (S',v')$ will remain in place throughout this paper, along with the sets $D, D'$ used in the construction of $S$. Observe that (using the same choice of $D$ and $D'$), we also get the same 
 orbifold $\Orb_{\f}= \Orb$ for the iterate $\f$. We can therefore also fix the
 corresponding preimage orbifold $\tilde{\Orb} = (\tilde{S},\tilde{v})$ of 
 $\Orb$ under $\f$.  

\section{Constructing the metric}
\label{S.metric}
When $\Par(f)=\emptyset$~-- i.e.\ in the setting of \cite{semiconjugacies}~--
Proposition~\ref{prop:uniformexpansion} implies that $f$ is uniformly expanding in
 the metric $\rho_{\Orb}$. This is sufficient to establish docility in this case.

 In our setting, where $\Par(f)$ may be non-empty, the expansion with respect
    to the orbifold metric will degenerate rapidly near the parabolic points,
    which are shared boundary points of  $S$ and of $S'$. 
     Accordingly we modify the metric $\rho_\Orb$ on $S$ near these points
     to a metric $\sigma$. 
 
  Let us begin by fixing the number 
 \[
n_\sigma \defeq \operatorname{lcm}\{\operatorname{deg}(\f,w) \cdot v(w) \colon w \in S \cap \f^{-1}(\Par(\f))\}.
\]    
Note that $n_\sigma$ is finite since the set of ramified points of $\Orb$ is finite, and since the local degree of $f$ at the points in $J(f)$ is uniformly bounded. 
 We also fix 
    \begin{equation}\label{eqn:sdef}
       s \defeq s_{\sigma} \defeq 1 - \frac{1}{2n_{\sigma}}, \end{equation}
   and recall the definition of 
      \[ \omega\colon \C\setminus \Par(f) \to \R; \quad
            \omega(z) \defeq \dpar(z)^{-s} \]
   from~\eqref{eq:omegadef}. 
   
   For $\epssig > 0$, we define a metric $\sigma = \sigma_{\epssig}$ 
    on $S$ by setting 
\begin{equation}
\label{eq:sigmadef}
\sigma(z) \defeq \begin{cases}
\rho_\Orb(z), &\text{for } \dpar(z) \geq \epssig, \\
\omega(z), &\text{otherwise}. 
\end{cases}
\end{equation}
Observe that $\sigma$ has singularities at the ramified points of $\Orb$, 
  and that $\sigma$ is not complete on $S$, 
   since any parabolic point has finite distance 
  from a point of $S$ in the metric $\omega$. However, $\sigma$ does induce
  a complete metric on the Julia set $J(f)$. 
The main result of this section is the following. Recall that
  $\tilde{\Orb}=(\tilde{S},\tilde{v})$ is the preimage orbifold of
  $\Orb$ under the iterate $\f$ of $f$ for which all parabolic points are 
  multiple fixed points. If $z \in \tilde{S}$, then the derivative of $\f$
  with respect to the metric $\sigma$ is denoted
  \[ \|\Deriv\f(z)\|_\sigma = \lvert \f'(z)\rvert \cdot \frac{\sigma(\f(z))}{\sigma(z)}.\]
   Observe that $\|\Deriv\f(z)\|$ may become infinite at preimages of ramified
   points of $\Orb$. 
   Let $\rmin$ be 
 the constant from Proposition~\ref{prop:petal}, with the choice of $s$ 
 from~\eqref{eqn:sdef}. We may assume, and will do so from now on,
  that $\rmin$ is chosen smaller than the constant $r_1$ from
  Proposition~\ref{prop:parabolic}. 
   Our main result is the following. 
\begin{proposition}[The metric $\sigma$ is expanding]
\label{prop:metric}
There exists $\epssig \in (0, \rmin)$ with the following properties. 
 Let $z\in \tilde{S}$.  Then 
\begin{equation}
\label{expclose}
\|\Deriv\f(z)\|_\sigma \geq \frac{\omega(\f(z))}{\omega(z)} \cdot |\f'(z)| > 1, \qfor 0 < \dpar(z) < \epssig.
\end{equation}
Moreover, if $0 <\delta < \epssig$, then there exists $\chi = \chi(\delta) > 1$ with the property that if $z \in \tilde{S}$, then
\begin{equation}
\label{expfar}
\|\Deriv\f(z)\|_\sigma \geq \chi, \qfor \dpar(z) \geq \delta.
\end{equation}
\end{proposition}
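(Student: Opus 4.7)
My plan is to combine the near-parabolic expansion of Proposition~\ref{prop:petal} (with respect to the density $\omega$) with the global expansion of Proposition~\ref{prop:uniformexpansion} (with respect to $\rho_\Orb$), taking $\epssig$ small enough to handle the discontinuity of $\sigma$ at $\dpar = \epssig$.

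For \eqref{expclose} I fix $z \in \tilde{S}$ with $0 < \dpar(z) < \epssig$ and take $\epssig < \rmin$. Since $z \notin D'$, Proposition~\ref{prop:parabolic}\ref{4.2:wbpetal} places $z$ in a thin repelling sector $A$ at a multiple fixed point $\zeta$, and Proposition~\ref{prop:petal}\ref{petal:fbigger} immediately gives $\omega(\f(z))/\omega(z) \cdot \lvert\f'(z)\rvert > 1$, the strict inequality on the right. For the remaining bound, since $\sigma(z) = \omega(z)$, it suffices to show $\sigma(\f(z)) \geq \omega(\f(z))$: this is automatic when $\dpar(\f(z)) < \epssig$, and otherwise reduces to $\rho_\Orb(\f(z)) \geq \omega(\f(z))$. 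Because $\f$ is essentially the identity near $\zeta$, and Proposition~\ref{prop:petal}\ref{petal:forwards} forces $\f(z) \in A$ once $\epssig < \rmin$, the image $\f(z)$ remains within a bounded multiple of $\epssig$ from $\zeta$; in this regime the cusp asymptotics $\rho_\Orb(w) \asymp (\dpar(w)\log(1/\dpar(w)))^{-1}$ dominate $\omega(w) = \dpar(w)^{-s}$ because $s < 1$, and this determines the final choice of $\epssig$.

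For \eqref{expfar} I fix $\delta \in (0,\epssig)$ and $z \in \tilde{S}$ with $\dpar(z) \geq \delta$, so $\sigma(z) = \rho_\Orb(z)$. If also $\dpar(\f(z)) \geq \epssig$, then $\sigma = \rho_\Orb$ at both endpoints and $\|\Deriv\f(z)\|_\sigma = \|\Deriv\f(z)\|_\Orb \geq \mu(\delta) > 1$ directly from Proposition~\ref{prop:uniformexpansion}. The non-trivial case is $\dpar(\f(z)) < \epssig$, where
\[
\|\Deriv\f(z)\|_\sigma = \lvert\f'(z)\rvert \cdot \omega(\f(z))/\rho_\Orb(z).
\]
Bounding $\rho_\Orb$ above on $\{\dpar \geq \delta\}$ (it vanishes at infinity and is continuous elsewhere) reduces the task to a uniform lower bound on $\lvert\f'(z)\rvert\,\dpar(\f(z))^{-s}$. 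Since $\f(z)$ lies close to some parabolic point $\zeta$, $z$ must lie near some preimage $\xi \in \f^{-1}(\zeta)$, and bounded criticality caps the local degree $d = \deg(\f,\xi)$. Away from critical preimages $\lvert\f'(z)\rvert$ is bounded below and $\dpar(\f(z))^{-s} > \epssig^{-s}$, giving a large bound. Near a critical preimage, local coordinates give
\[
\lvert\f'(z)\rvert\,\dpar(\f(z))^{-s} \asymp \lvert z - \xi\rvert^{\,d(1-s) - 1},
\]
and the choice $s = 1 - 1/(2n_\sigma)$ in \eqref{eqn:sdef}, combined with $d \mid n_\sigma$, forces $d(1-s) \leq 1/2$, so the exponent is $\leq -1/2 < 0$ and the quantity blows up as $z \to \xi$ instead of vanishing. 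Combining these local estimates with the Eremenko--Lyubich expansion of $\f$ at infinity then yields a uniform $\chi(\delta) > 1$.

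The principal obstacle is this last far-to-close case: the metric $\sigma$ changes form between $z$ and $\f(z)$, and the naive comparison between $\sigma$ and $\rho_\Orb$ runs in the wrong direction. Overcoming it depends critically on the calibration of $s$ in \eqref{eqn:sdef}, which is tailored so that the ramification of the orbifold cover $\f\colon\tilde{\Orb}\to\Orb$ at preimages of parabolic points precisely compensates for the weaker blow-up of $\omega$ compared with $\rho_\Orb$; matching the two expansion regimes at the threshold $\dpar = \epssig$ then pins down how small $\epssig$ must be.
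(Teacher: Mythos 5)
Your overall architecture — splitting on whether $\dpar$ lies below or above a threshold at $z$ and at $\f(z)$, invoking Proposition~\ref{prop:petal}\ref{petal:fbigger} in the near-parabolic regime and Proposition~\ref{prop:uniformexpansion} in the far regime, and recognizing that the calibration of $s$ against the local degrees at preimages of parabolic points is what makes the cross-over work — matches the paper's proof in spirit. But the execution of the difficult ``far-to-close'' transition has a real gap, and there is a mis-statement in the case split.

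The mis-statement: for \eqref{expfar} you ``fix $\delta\in(0,\epssig)$ and $z\in\tilde{S}$ with $\dpar(z)\geq\delta$, so $\sigma(z)=\rho_\Orb(z)$.'' That is false when $\delta\leq\dpar(z)<\epssig$, where by \eqref{eq:sigmadef} we have $\sigma(z)=\omega(z)$. The paper treats this band separately, obtaining a uniform constant $\alpha>1$ by compactness from Proposition~\ref{prop:petal}\ref{petal:fbigger}.

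The gap: in the far-to-close case you try to bound $\rho_\Orb$ from above on $\{\dpar\geq\delta\}$ and reduce the problem to a uniform lower bound on $|\f'(z)|\,\dpar(\f(z))^{-s}$. But $\rho_\Orb$ is \emph{not} bounded above on $\{\dpar\geq\delta\}$; it has singularities at the ramified points of $\Orb$, which lie in $S$ at positive distance from $\Par(\f)$. More seriously, your local estimate $|\f'(z)|\,\dpar(\f(z))^{-s}\asymp|z-\xi|^{d(1-s)-1}$ carries implicit constants that depend on the particular preimage $\xi\in\f^{-1}(\Par(\f))$, and there are infinitely many such $\xi$; the appeal to Eremenko--Lyubich expansion does not plainly supply the required uniformity, since the quantity you need to control involves $\dpar(\f(z))$ rather than $|\f(z)|$. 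The paper sidesteps both problems at once with the orbifold Schwarz--Pick lemma: for each preimage component $Q\neq V$ of $B(\zeta,r_0)$ with $\xi\in Q$ over $\zeta$, the map $\f\colon(Q,v)\to(B(\zeta,r_0),v^\#)$ is an orbifold covering with $v^\#(\zeta)=\deg(\f,\xi)v(\xi)$, and the definition of $n_\sigma$ makes the inclusion $(B(\zeta,r_0),v^\zeta)\hookrightarrow(B(\zeta,r_0),v^\#)$ holomorphic; Proposition~\ref{prop:dist} and Corollary~\ref{corr:dist} then give the \emph{universal} inequality $\rho_\zeta(\f(z))\,|\f'(z)|/\rho_\Orb(z)\geq 1$ with no constants to track, and $\omega(w)/\rho_\zeta(w)\asymp|w-\zeta|^{-1/(2n_\sigma)}\to\infty$ completes the estimate after shrinking the disc. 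This is the mechanism your argument needs but does not reach; the direct local computation, even if made rigorous at a single $\xi$, does not yield uniformity over $\f^{-1}(\Par(\f))$, nor does it cope with the ramified points of $\Orb$.

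A small further point: $\rho_\Orb(w)\asymp(\dpar(w)\log(1/\dpar(w)))^{-1}$ is the asymptotic near a \emph{puncture}; here $\zeta$ is a boundary point of a Jordan domain $D'_j\subset\C\setminus S$, so the correct (and stronger) lower bound is $\rho_\Orb(w)\,|w-\zeta|\geq c$ from \cite{Beardon}, which is what the paper uses for its second claim \eqref{eq:epsdef}. Your conclusion that $\rho_\Orb$ dominates $\omega$ near $\zeta$ still holds, just for a different reason.
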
 
To prove Proposition~\ref{prop:metric} we need the following. Recall that $B$ denotes the set of ramified points of $\Orb$.
\begin{proposition}
\label{prop:niceS}
There exists $r_0 \in (0, \rmin)$ with the following property. If $\zeta \in \Par(\f)$ and $0 < r < r_0$, then the following all hold, where $V$ is the component of $\f^{-1}(B(\zeta, r))$ containing $\zeta$. 
\begin{enumerate}
\item The punctured disc $B(\zeta, r) \setminus \{\zeta\}$ does not meet $S(\f)$.\label{Scond1}
\item If $Q \ne V$ is a component of $\f^{-1}(B(\zeta, r))$, then 
  $Q\subset S$, and $Q$ contains no ramified points of $\Orb$, with the 
  possible exception of the unique preimage of $\zeta$ in $Q$.\label{Scond2}
\item If $z \in V \cap \tilde{S}$, then $z \in B(\zeta, r)$.\label{Scond5}
\end{enumerate}
\end{proposition}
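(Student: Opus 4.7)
The plan is to verify each of conditions~\ref{Scond1}--\ref{Scond5} separately for all sufficiently small $r$, and then pass to a uniform $r_0$ using the finiteness of $\Par(\f)$.

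For condition~\ref{Scond1} we use that $\f = f^{\n}$ forces $S(\f) \subset \bigcup_{k=0}^{\n-1} f^k(S(f)) \subset P(f)$. By Proposition~\ref{prop:Fatou}, $P(f) \cap J(f)$ is finite while $P(f) \cap F(f)$ lies in the compact set $\overline{F(f) \cap S(f)} \subset F(f)$, which is at positive distance from $\zeta \in J(f)$. Choosing $r$ less than this distance and less than the distance from $\zeta$ to $(P(f) \cap J(f)) \setminus \{\zeta\}$ gives $S(\f) \cap B(\zeta, r) \subset \{\zeta\}$.

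For condition~\ref{Scond5} the main step is to first confine $V$. Fix $\delta \in (0, \rmin)$ small enough that $\overline{B(\zeta, \delta)}$ meets $\Par(\f)$ only at $\zeta$ and that the local expansion $\f(z) = \zeta + (z-\zeta) + O(|z-\zeta|^{p+1})$ at the multiple fixed point yields $|\f(z)-\zeta| \geq \delta/2$ whenever $|z-\zeta| = \delta$. For $r < \delta/2$ the circle $\partial B(\zeta, \delta)$ is disjoint from $\f^{-1}(B(\zeta, r))$, so the connected set $V$ (which contains $\zeta$) must lie in $B(\zeta, \delta)$. Given $w \in V \setminus B(\zeta, r)$, we then have $r \leq \dpar(w) = |w-\zeta| < \delta < \rmin < r_1$, where $r_1$ is the constant of Proposition~\ref{prop:parabolic}. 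By part~\ref{4.2:wbpetal} of Proposition~\ref{prop:parabolic}, either $w \in D'$ or $w$ lies in a thin repelling sector at $\zeta$. The latter is incompatible with $\f(w) \in B(\zeta, r)$, since Proposition~\ref{prop:wbpetal} would force $|\f(w)-\zeta| > |w-\zeta| \geq r$. Hence $w \in D'$, and part~\ref{4.2:1} of Proposition~\ref{prop:parabolic} yields $\f(w) \in \f(D') \subset D' \subset \overline{D \cup D'}$, so $w \notin \tilde{S}$.

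For condition~\ref{Scond2} the key observation is $\f^{-1}(\zeta) \cap \overline{D \cup D'} = \{\zeta\}$: any preimage $w \neq \zeta$ of $\zeta$ lies in $J(f)$ and cannot be parabolic (parabolic points of $\f$ are fixed by construction, so $w \in \Par(\f)$ would give $\zeta = \f(w) = w$), while part~\ref{4.2:3} of Proposition~\ref{prop:parabolic} together with $\overline{D} \subset \AbsO(f)$ yields $\overline{D \cup D'} \setminus \Par(\f) \subset F(f)$. A compactness argument (any accumulation point of $\f^{-1}(B(\zeta, r_n)) \cap \overline{D \cup D'}$ as $r_n \to 0$ is a preimage of $\zeta$ in $\overline{D \cup D'}$, hence equals $\zeta$) then shows $\f^{-1}(B(\zeta, r)) \cap \overline{D \cup D'}$ lies in an arbitrarily small neighbourhood of $\zeta$, hence inside $V$ by the previous paragraph. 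So $Q \cap \overline{D \cup D'} = \emptyset$ whenever $Q \neq V$, giving $Q \subset S$. A ramified point $w^* \in Q$ lies in $P(f) \cap J(f)$, so $\f(w^*) \in P(f) \cap J(f) \cap B(\zeta, r) \subset \{\zeta\}$ for small $r$, forcing $w^* \in \f^{-1}(\zeta)$; uniqueness of such a preimage within $Q$ follows by further shrinking $r$, using discreteness of $\f^{-1}(\zeta)$ to ensure that each component of $\f^{-1}(B(\zeta, r))$ contains at most one preimage of $\zeta$.

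The main obstacle I expect is the first step of condition~\ref{Scond5}: bounding $V$ inside $B(\zeta, \delta)$. In attracting directions $\f$ is nearly the identity, so a priori $V$ could follow an attracting petal far from $\zeta$; the proof succeeds only because one combines the boundary estimate $|\f(z)-\zeta| \geq \delta/2$ on $\partial B(\zeta, \delta)$ (which holds once $\delta$ is small enough for the near-identity correction to dominate) with the strict expansion of Proposition~\ref{prop:wbpetal} in thin repelling sectors. Once $V$ is localised, the remaining arguments are bookkeeping with finite sets.
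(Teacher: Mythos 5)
Your overall strategy is sound and the argument for condition~(3) is essentially correct (it is a mild variant of the paper's: the paper simply notes $z\in\tilde{S}\subset S$ forces $z\notin D'$, whereas you set up a dichotomy and eliminate the case $w\in D'$ separately). However, your treatment of condition~(1) contains a genuine error.

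You write that ``$P(f)\cap F(f)$ lies in the compact set $\overline{F(f)\cap S(f)}\subset F(f)$.'' This is false. The postsingular set in the Fatou set is the closure of the forward orbit of $S(f)\cap F(f)$, not merely its closure. In particular, if a singular value $s$ lies in the parabolic basin of $\zeta$, then $f^n(s)\to\zeta$, so $\zeta$ itself is an accumulation point of $P(f)\cap F(f)$; yet $\zeta\notin F(f)\cap S(f)$ since the latter is a compact subset of the Fatou set. So $P(f)\cap F(f)$ has no positive distance from $\zeta$ in general, and the inclusion you assert fails. The conclusion you want is nevertheless true, but you must not pass through $P(f)$: instead use directly that $S(\f)\cap F(f)\subset\bigcup_{k=0}^{\n-1} f^k\bigl(S(f)\cap F(f)\bigr)$, which is a \emph{finite} union of compact subsets of the (forward-invariant, open) Fatou set, hence compact in $F(f)$ and therefore at positive distance from $\zeta\in J(f)$. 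Combined with the finiteness of $S(\f)\cap J(f)\subset P(f)\cap J(f)$, this gives condition~(1). The paper's proof simply records that $\zeta$ is not an accumulation point of $S(\f)$; the argument just given is what makes that true.

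Two further points in condition~(2) deserve tightening. First, the phrase ``hence inside $V$ by the previous paragraph'' is not supported by what you proved there (that $V\subset B(\zeta,\delta)$): you need the different fact that, for $r$ small, $B(\zeta,\delta)\cap\f^{-1}(B(\zeta,r))$ is exactly the image of $B(\zeta,r)$ under the local inverse branch of $\f$ at $\zeta$ and hence is a connected subset of $V$. Second, ``discreteness of $\f^{-1}(\zeta)$'' does not on its own yield that each component of $\f^{-1}(B(\zeta,r))$ meets $\f^{-1}(\zeta)$ in at most one point: a priori nothing prevents two distant preimages from lying in the same component for arbitrarily small $r$. The correct reason is that, once~(1) holds, $\f$ is an unbranched covering over the punctured disc $B(\zeta,r)\setminus\{\zeta\}$, and since $\zeta$ is not an asymptotic value of $\f$ (bounded criticality on $J(f)$), every component $Q$ is a proper finite cover whose inner end fills in to a single point of $\f^{-1}(\zeta)$. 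Note also that the paper obtains~(2) via a different route (finitely many preimage components can meet the compact set $(\C\setminus S)\cup B$, by a lemma from the literature), which sidesteps the explicit compactness argument you use; both routes are viable, but the uniqueness-of-preimage step needs the covering argument regardless.
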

\begin{proof}
Since $\f$ only has finitely many parabolic fixed points, we may prove
 the results separately for each parabolic point $\zeta$. 
 
Since $\f$ is geometrically finite, $\zeta$ is not an accumulation point of $S(f)$. Therefore condition \eqref{Scond1} holds whenever $r_0>0$ is chosen sufficiently small.

Recall that the set $B$ of ramified points of $\Orb$ is finite and, by construction, $\C \setminus S$ is compact. It follows by \cite[Lemma 2.1]{dreadlocks} that only finitely many preimages of $B(\zeta, r_0)$ can meet $(\C \setminus S) \cup B$. Moreover, by the Definition of $\Orb$ in Section~\ref{S.constructO}, we have $\f^{-1}(\zeta) \setminus \{\zeta\} \subset S$. Property \eqref{Scond2} follows, reducing $r_0$ if necessary.

Reducing $r_0$ one last time, if necessary, we can assume that $V \subset B(\zeta, \rmin)$. Recall that $r_0<r_1$, where $r_1$ is the
 constant from Proposition~\ref{prop:parabolic}. 
  Suppose that $z \in V \cap \tilde{S}$. The fact that $z \in V$ implies that $|\f(z) - \zeta| < r$. The fact that $z \in V \cap \tilde{S} \subset V \cap S$ implies, by Proposition~\ref{prop:parabolic}\ref{4.2:wbpetal}, that $z$ is in a 
 thin repelling sector. Hence $|z - \zeta| < |\f(z) - \zeta|$ by Proposition~\ref{prop:petal}\ref{petal:fbigger}. Part \eqref{Scond5} follows.
\end{proof}

\begin{proof}[Proof of Proposition~\ref{prop:metric}]
 Recall that the metric $\rho_{\Orb}$ is expanding at every point of $\tilde{S}$,
   while the metric $\omega$ is expanding in thin repelling sectors near
   parabolic point. Our main goal is, therefore, to show that, for sufficiently
   small choice of $\epssig$, the function $\f$ is also expanding in the metric
   $\sigma$ when one of $\dpar(z)$ and $\dpar(\f(z))$ is less than $\epssig$, 
   and the other is not. For a geometrically finite polynomial (with the metric $\sigma$ defined analogously), 
     the set where $\dpar(z)>\epssig$ but
     $\dpar(\f(z))<\epssig$ has only finitely many connected components, and the proof 
     of local connectivity of the Julia set (e.g. in \cite[Theorem 4.3]{carlesongamelin}) relies on this in an essential way:
     one considers each component separately and shows that 
     a sufficiently small $\epssig$ ensures 
     the desired expansion estimate there. In the transcendental case, 
     parabolic points usually have infinitely many preimages under $\f$, and thus we must develop
     a \emph{uniform} estimate across infinitely many connected components. This is achieved in Claim 1, below.
      
To this end, let $r_0>0$ be the constant from Proposition~\ref{prop:niceS}. 
For each parabolic fixed point, $\zeta$, define a ramification map on 
 the disc $B(\zeta, r_0)$ by
\[
v^\zeta(z) = 
\begin{cases}
n_\sigma, &\text{ for } z = \zeta, \\
1, &\text{ otherwise}.
\end{cases}
\]
Let $\rho_\zeta$ denote the hyperbolic orbifold 
 metric of $(B(\zeta, r_0), v^\zeta)$.


\begin{claim}[Claim~1]
 There is $\eps_{\zeta} < r_0$ with the following property. If $V$ is the connected component of $\f^{-1}(B(\zeta, r_0))$ containing $\zeta$, then
\begin{equation}
\label{eq:Mdef}
\frac{\omega(\f(z))}{\rho_{\Orb}(z)} \cdot |\f'(z)| \geq 2, \qfor z \in \f^{-1}(B(\zeta, \eps_{\zeta})) \setminus V \subset S.
\end{equation}
\end{claim}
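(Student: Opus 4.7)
My plan is to control the left-hand side of~\eqref{eq:Mdef} via an orbifold covering argument involving the auxiliary metric $\rho_\zeta$. Let $Q\neq V$ be any component of $\f^{-1}(B(\zeta,r_0))$, and let $w$ denote the unique preimage of $\zeta$ in $Q$. By Proposition~\ref{prop:niceS}\eqref{Scond1} the disc $B(\zeta,r_0)\setminus\{\zeta\}$ contains no singular values of $\f$, so the only possible critical point of $\f$ in $Q$ is $w$; setting $k \defeq \deg(\f,w)$, a Riemann--Hurwitz count then shows that $\f\colon Q\to B(\zeta,r_0)$ is a proper branched cover of degree $k$ and that $Q$ is simply connected.

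I equip $Q$ with the orbifold structure $\tilde{v}^Q(w)\defeq n_\sigma/k$, $\tilde{v}^Q\equiv 1$ on $Q\setminus\{w\}$, and write $\Orb^Q$ for the resulting orbifold. Since $w\in S\cap\f^{-1}(\Par(\f))$, the definition of $n_\sigma$ guarantees both that $k\mid n_\sigma$ (so $\tilde{v}^Q(w)\in\N$) and that $v(w)\mid n_\sigma/k$. Combined with Proposition~\ref{prop:niceS}\eqref{Scond2}, which says $v\equiv 1$ on $Q\setminus\{w\}$, this ensures simultaneously that $\f\colon\Orb^Q\to(B(\zeta,r_0),v^\zeta)$ is an orbifold covering and that the inclusion $\Orb^Q\hookrightarrow\Orb$ is holomorphic. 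Since $\f$ is an orbifold cover, it is a local isometry by Proposition~\ref{prop:dist}, while the inclusion is distance-decreasing by Corollary~\ref{corr:dist}; together these give
\[
\lvert\f'(z)\rvert\,\rho_\zeta(\f(z)) \;=\; \rho_{\Orb^Q}(z) \;\geq\; \rho_{\Orb}(z), \qquad z\in Q,
\]
and hence, for every $z\in\f^{-1}(B(\zeta,r_0))\setminus V$,
\[
\frac{\omega(\f(z))}{\rho_{\Orb}(z)}\,\lvert\f'(z)\rvert \;\geq\; \frac{\omega(\f(z))}{\rho_\zeta(\f(z))}.
\]
Crucially, the right-hand side depends only on $\f(z)\in B(\zeta,r_0)$, so a single choice of $\eps_\zeta$ will work uniformly over all such $Q$.

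It then suffices to verify that $\omega(w)/\rho_\zeta(w)\to\infty$ as $w\to\zeta$. The universal cover of $(B(\zeta,r_0),v^\zeta)$ is $\D$, via a covering map of the form $u\mapsto\zeta+r_0 u^{n_\sigma}$; pulling back the Poincar\'e metric of $\D$ gives the explicit asymptotic $\rho_\zeta(w)=O(\lvert w-\zeta\rvert^{-(1-1/n_\sigma)})$ as $w\to\zeta$ (interpreted as a bounded quantity when $n_\sigma=1$). Since $\dpar(w)=\lvert w-\zeta\rvert$ for $w$ close to $\zeta$, we have $\omega(w)=\lvert w-\zeta\rvert^{-s}$, and the choice $s=1-1/(2n_\sigma)$ from~\eqref{eqn:sdef} is engineered exactly so that
\[
\frac{\omega(w)}{\rho_\zeta(w)} \;\gtrsim\; \lvert w-\zeta\rvert^{-1/(2n_\sigma)} \;\longrightarrow\; \infty.
\]
Choosing $\eps_\zeta\in(0,r_0)$ small enough that this ratio exceeds $2$ on $B(\zeta,\eps_\zeta)\setminus\{\zeta\}$ then yields~\eqref{eq:Mdef}. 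The only non-routine point is juggling the two orbifold compatibilities: the integrality $n_\sigma/k\in\N$ is what makes $\f$ an orbifold cover, while the divisibility $v(w)\mid n_\sigma/k$ is what makes the inclusion $\Orb^Q\hookrightarrow\Orb$ holomorphic, and both conditions are exactly what the definition of $n_\sigma$ as an lcm over preimages of parabolic points is set up to secure.
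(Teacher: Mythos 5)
Your argument is correct and follows essentially the same route as the paper: an orbifold covering argument reducing the left-hand side to the quotient $\omega/\rho_\zeta$, which tends to infinity near $\zeta$ by the tailored choice of $s_\sigma$. The one structural difference is that you absorb the ramification discrepancy into a bespoke orbifold $\Orb^Q$ on the \emph{source} $Q$ and cover $(B(\zeta,r_0),v^\zeta)$ directly, using a single holomorphic inclusion $\Orb^Q\hookrightarrow\Orb$; the paper instead keeps the restriction of $\Orb$ on $Q$, introduces an intermediate target orbifold $(B(\zeta,r_0),v^\#)$ with $v^\#(\zeta)=\deg(\f,\xi)\cdot v(\xi)$, and then uses two inclusions, one on the source and one on the target. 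Both decompositions exploit the lcm definition of $n_\sigma$ in the same two ways (integrality and divisibility), and both lead to the same key uniform inequality $\lvert\f'(z)\rvert\,\rho_\zeta(\f(z))\geq\rho_\Orb(z)$, so the difference is cosmetic. Two small remarks: the Riemann--Hurwitz step presupposes that $\f\colon Q\to B(\zeta,r_0)$ is proper (which holds because $\f$, being strongly geometrically finite, has no asymptotic value at $\zeta\in J(\f)$, and $Q$ contains only finitely many critical points); the paper also silently assumes this, so you are no worse off. Also your computed exponent $-1/(2n_\sigma)$ in the final asymptotic is the correct one (the paper's displayed exponent contains a typo).
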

\begin{subproof}
Note that, by Proposition~\ref{prop:niceS}\eqref{Scond2},
 we indeed have $\f^{-1}(B(\zeta, \eps_{\zeta})) \setminus V \subset S$. So the left-hand side of \eqref{eq:Mdef} is defined, except possibly in the 
 case where $\f(z)=\zeta$; as we shall see below, in the latter case the 
 quantity becomes infinite. 

Let $ Q \ne V$ be a connected component of $\f^{-1}(B(\zeta, r_0))$, and let $\xi \in Q$ be such that $\f(\xi) = \zeta$. Recall by Proposition~\ref{prop:niceS}\eqref{Scond2} that $Q \subset S$ and that no point of $Q$, except
 possibly $\xi$, is a ramified point of the orbifold $\Orb$. 
   Define another ramification map on $B(\zeta, r_0)$ by
\[
v^\#(z) = 
\begin{cases}
\operatorname{deg}(\f,\xi) \cdot \nu(\xi), &\text{ for } z = \zeta, \\
1, &\text{ otherwise},
\end{cases}
\]
and let $\rho_\#$ denote the metric on $(B(\zeta, r_0), v^\#)$.

It follows by 
Proposition~\ref{prop:niceS}\eqref{Scond1} that 
 $\f \colon (Q, v) \to (B(\zeta, r_0), v^\#)$ is an orbifold covering map. Let $\rho_Q$ denote the metric on $(Q, v)$. The definition of $n_\sigma$ ensures that the inclusion $(B(\zeta, r_0), v^\zeta) \hookrightarrow (B(\zeta, r_0), v^\#)$ is holomorphic. It follows, by Proposition~\ref{prop:dist} and Corollary~\ref{corr:dist}, that  
\begin{equation}
\label{eq:long}
1 = \frac{\rho_\#(\f(z))}{\rho_Q(z)} \cdot |\f'(z)| < \frac{\rho_\#(\f(z))}{\rho_{\Orb}(z)} \cdot |\f'(z)| \leq \frac{\rho_\zeta(\f(z))}{\rho_{\Orb}(z)} \cdot |\f'(z)|, \qfor z \in Q.
\end{equation}
 The density $\rho_{\zeta}(w)$ is comparable to
   $\lvert w - \zeta\rvert^{-(1-1/n_{\sigma})}$ as $w\to \zeta$. Therefore, 
  \begin{equation}\label{eqn:auxiliaryquotient}
    \frac{\omega(w)}{\rho_{\zeta}(w)}
      \equiv \lvert w - \zeta\rvert^{\frac{1}{-n_{\sigma}}} \to \infty \end{equation}
as $w\to \zeta$. Hence we can choose 
 $\eps_{\zeta}$ so small that the 
 quotient in~\eqref{eqn:auxiliaryquotient} is at least $2$ for
   $\lvert w - \zeta\rvert \leq \eps_{\zeta}$.  The claim~\eqref{eq:Mdef} then
   follows from~\eqref{eq:long}. 
\end{subproof}
\begin{claim}[Claim~2]
There exists $\epsilon_0 \in (0, r_0/2)$ such that
\begin{equation}
\label{eq:epsdef}
\rho_{\Orb}(\f(z)) \geq \omega(\f(z)), \qfor z \in \tilde{S} \text{ such that } \dpar(z) < \epsilon_0.
\end{equation}
\end{claim}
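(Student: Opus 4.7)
My plan is to handle each parabolic fixed point of $\f$ separately and combine the results via a minimum at the end. Fix $\zeta\in\Par(\f)$. The core observation is that, although $\rho_\Orb$ and $\omega$ both blow up as $w\to\zeta$ in $S$, the orbifold density $\rho_\Orb$ blows up at the nearly-linear rate $1/|w-\zeta|$ (up to logarithmic factors), whereas $\omega(w) = |w-\zeta|^{-s}$ with $s<1$ blows up only at a strictly sub-linear polynomial rate. The inequality $\rho_\Orb\geq\omega$ must therefore hold in a punctured neighbourhood of $\zeta$.

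To make this rigorous, I would pick any auxiliary point $a\in D'_\zeta\setminus\{\zeta\}$ inside the parabolic petal bordered by $\zeta$, and consider the thrice-punctured sphere $V \defeq \C\setminus\{\zeta,a\}$. Both $\zeta$ and $a$ lie outside $S$ (the first on $\partial D'$, the second in $D'$), so $S\subset V$, and the inclusion $\Orb\hookrightarrow V$ is trivially holomorphic as an orbifold map, since $V$ carries no ramification. Corollary~\ref{corr:dist} then yields $\rho_\Orb(w)\geq\rho_V(w)$ for every $w\in S$. The hyperbolic density $\rho_V$ obeys the classical asymptotic
\[
\rho_V(w) \sim \frac{1}{2|w-\zeta|\log(1/|w-\zeta|)} \qquad\text{as } w\to\zeta,
\]
obtained by pulling back the explicit density of $\C\setminus\{0,1\}$ via the affine chart $w\mapsto(w-\zeta)/(a-\zeta)$. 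Because $\Par(\f)$ is finite, $\zeta$ is the unique nearest parabolic point to $w$ whenever $w$ is close enough to $\zeta$; then $\omega(w) = |w-\zeta|^{-s}$, and
\[
\frac{\rho_\Orb(w)}{\omega(w)} \;\geq\; \frac{|w-\zeta|^{\,s-1}}{2\log(1/|w-\zeta|)} \;\longrightarrow\; \infty \qquad\text{as } w\to\zeta,
\]
since $1-s = 1/(2n_\sigma)>0$. Hence there exists $\delta_\zeta>0$ with $\rho_\Orb(w)\geq\omega(w)$ for every $w\in S$ satisfying $|w-\zeta|<\delta_\zeta$.

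It remains to transfer this from $w$ to $\f(z)$. Since $\f$ is continuous at $\zeta$ and $\f(\zeta)=\zeta$, there is $\epsilon_\zeta>0$ such that $|z-\zeta|<\epsilon_\zeta$ implies $|\f(z)-\zeta|<\delta_\zeta$. For $z\in\tilde S = \f^{-1}(S)$ with $|z-\zeta|<\epsilon_\zeta$ we have $\f(z)\in S$, so the previous estimate applies at $w=\f(z)$ and delivers $\rho_\Orb(\f(z))\geq\omega(\f(z))$. Taking $\epsilon_0 \defeq \min_{\zeta\in\Par(\f)}\epsilon_\zeta$, shrunk further if necessary so that $\epsilon_0<r_0/2$ and so that the condition $\dpar(z)<\epsilon_0$ pins down a unique nearest parabolic point, completes the proof. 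The only real point of care is this last bookkeeping; the essence of the argument is simply the mismatch between the rates at which $\rho_\Orb$ and $\omega$ blow up at $\zeta$.
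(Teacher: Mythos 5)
Your proof is correct and arrives at the same conclusion by a genuinely different comparison. The paper bounds $\rho_{\Orb}$ from below by the hyperbolic density $\rho$ of $S$ itself and then invokes a theorem of Beardon (exploiting that $S$ is the complement of finitely many bounded Jordan domains, so its boundary is uniformly perfect) to obtain the log-free estimate $\rho(w)\,|w-\zeta| \geq c > 0$. Against this, $\omega(w)\,|w-\zeta| = |w-\zeta|^{1-s} \to 0$, and the conclusion follows immediately. You instead compare $\Orb$ with the twice-punctured plane $V = \C\setminus\{\zeta,a\}$ via Corollary~\ref{corr:dist}, and use the explicit asymptotic $\rho_V(w) \sim \bigl(2|w-\zeta|\log(1/|w-\zeta|)\bigr)^{-1}$. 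This is a weaker lower bound — it loses a logarithm — but, as you correctly note, that loss is irrelevant here because $|w-\zeta|^{s-1}$ diverges polynomially as $w\to\zeta$ and therefore beats any logarithm. What you gain is elementariness: no appeal to uniform perfectness or to Beardon's theorem, only the classical and fully explicit density of the thrice-punctured sphere. What the paper's route gains is a cleaner estimate ($\rho_\Orb(w)\geq\omega(w)$ emerges from a constant bound rather than a limit argument), which would matter if one wanted sharper control, but does not matter for this claim. Both proofs transfer the estimate from $w$ to $\f(z)$ in the same way, using continuity of $\f$ at the fixed point $\zeta$, and both rightly use that $\Par(\f)$ is finite so that $\dpar(w) = |w-\zeta|$ pins down a unique nearest parabolic point once $w$ is close enough.
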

\begin{subproof}
Suppose that $\zeta \in \Par(\f)$. Let $\rho$ denote the \emph{hyperbolic} metric in $S$. Since $S$ is the complement of a finite collection of bounded Jordan domains, it follows from \cite[Theorem 1]{Beardon} that there is a constant $c>0$ such that
\[
\rho_{\Orb}(w) |w - \zeta| \geq \rho(w) |w-\zeta| \geq c, \qfor w \in S,
\]
However, by definition of the function $\omega$, as $w \rightarrow \zeta$ in $S$, we have that
\[
\omega(w) |w - \zeta| = |w - \zeta|^{1-s} = |w - \zeta|^{2n_{\sigma}}.
\]
Hence $\rho_{\Orb}(w) \geq \omega(w)$, whenever $\dpar(w)$ is sufficiently small. The claim follows, since $\zeta$ is a fixed point.
\end{subproof}
%
%
%

 We now define 
   \[ \epssig \defeq \min\bigl(\eps_0, \min_{\zeta\in\Par(f)} \eps_{\zeta}\bigr). \]
Suppose that $z \in \tilde{S}$ and that $0 <\delta<\epssig$. 
 We must consider four cases, which depend on the sizes of $\dpar(z)$ and $\dpar(f(z))$ compared to $\epssig$.
\begin{itemize}
\item Suppose that $\dpar(z) < \epssig$ and $\dpar(\f(z)) < \epssig$. By Proposition~\ref{prop:parabolic}\ref{4.2:wbpetal} we have that $z$ is in a thin repelling sector. Then, by Proposition~\ref{prop:petal}\ref{petal:fbigger},
\[
\|\Deriv\f(z)\|_\sigma = \frac{\omega(\f(z))}{\omega(z)} \cdot |\f'(z)| > 1.
\]
\item Suppose that $\dpar(z) < \epssig$ and $\dpar(\f(z)) \geq \epssig$. Once again, $z$ is in a thin repelling sector. Then, by \eqref{eq:epsdef} and Proposition~\ref{prop:petal}\ref{petal:fbigger},
\[
\|\Deriv\f(z)\|_\sigma = \frac{\rho_{\Orb}(\f(z))}{\omega(z)} |\f'(z)| \geq \frac{\omega(\f(z))}{\omega(z)} \cdot |\f'(z)| > 1.
\]
\item Suppose that $\dpar(z) \geq \epssig$ and $\dpar(\f(z)) \geq \epssig$. Then, by Proposition~\ref{prop:uniformexpansion}, there exists $\mu = \mu(\epssig) > 1$ such that
\[
\|\Deriv\f(z)\|_\sigma = \frac{\rho_{\Orb}(\f(z))}{\rho_{\Orb}(z)} |\f'(z)| = \|\Deriv\f(z)||_\Orb \geq \mu.
\]
\item Suppose that $\dpar(z) \geq \epssig$ and $\dpar(\f(z)) < \epssig$. By definition there exists a point $\zeta \in \Par(\f)$ such that $|\f(z) - \zeta| < \epssig$ and $|z - \zeta| \geq \epssig$. It follows by Proposition~\ref{prop:niceS}\eqref{Scond5} that $z$ does not lie in the component of $\f^{-1}(B(\zeta, \epssig))$ containing $\zeta$. Hence, by \eqref{eq:Mdef} we obtain that
\[
\|\Deriv\f(z)\|_\sigma = \frac{\omega(\f(z))}{\rho_{\Orb}(z)} \cdot |\f'(z)| \geq 2. 
\]
\end{itemize}
Note that the first two cases above complete the proof of \eqref{expclose}. To prove \eqref{expfar} we first choose $\alpha>1$ sufficiently small that
\[
\frac{\omega(\f(z))}{\omega(z)} \cdot |\f'(z)| > \alpha, \qfor \delta < \dpar(z) < r_0.
\]
This is possible by Proposition~\ref{prop:petal}\ref{petal:fbigger}, together with the fact that a continuous function attains a minimum on a compact set. We then set
\[
\chi \defeq \min\{2, \mu, \alpha\} > 1,
\]
and the result follows from the four cases above.
\end{proof}
 We will fix the constant $\epssig$ from Proposition~\ref{prop:metric} and the corresponding metric $\sigma$ throughout the rest of the paper.
\section{Expansion}
\label{S.expanding}
 The following result gives a form of expansion for all sufficiently long orbits in 
  $\tilde{S}$. 
\begin{proposition}
\label{prop:nicederivatives}
There exist constants $\ell, \tau > 1$ and $C>0$ with the following property. Suppose that $z \in \tilde{S}$ and $k \geq 1$ are such that 
$\f^k(z) \in S$. Then
\begin{equation}
\label{eq:derivs}
\|\Deriv \f^k(z)\|_\sigma \geq C \cdot \min\{\dpar(\f^{k}(z))^\ell, 1 \} \cdot k^\tau.
\end{equation}
\end{proposition}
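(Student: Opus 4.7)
The plan is to decompose the orbit $z, \f(z), \ldots, \f^k(z)$ into \emph{cascades} (maximal runs close to $\Par(\f)$) and \emph{free} far-from-parabolic steps, estimate each piece using the preceding propositions, and combine them via a pigeonhole argument on cascade lengths.

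\textbf{Setup.} Fix $\delta \in (0, \epssig)$ and write $\chi = \chi(\delta) > 1$ from Proposition~\ref{prop:metric}. Since each $\zeta \in \Par(\f)$ is a fixed point of $\f$ with $\f(\zeta) = \zeta$, continuity at each of the finitely many parabolic points yields a uniform $\eta > 0$ with the contrapositive property that $|\f(w) - \zeta| \geq \epssig$ forces $|w - \zeta| \geq \eta$. First I would observe that, because $\f(D \cup D') \subset D \cup D'$ by Propositions~\ref{prop:attracting} and~\ref{prop:parabolic}, the hypothesis $\f^k(z) \in S$ forces every intermediate iterate $z_j = \f^j(z)$ to lie in $S$, so the chain rule applies throughout. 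Call $z_j$ \emph{close} if $\dpar(z_j) < \epssig$; by Proposition~\ref{prop:parabolic}\ref{4.2:wbpetal}, such $z_j$ lies in a thin repelling sector of some parabolic point, and by Proposition~\ref{prop:petal}\ref{petal:forwards} consecutive close indices share the same sector. Hence the close indices partition into maximal runs $I_i = [a_i, b_i]$, each contained in some $A_{\vv_i}(\rmin)$ at $\zeta_i$; let $n_i = b_i - a_i$ and let $N_f$ count the far step-starts.

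\textbf{Cascade and free-step bounds.} On each cascade $I_i$ with $n_i \geq 1$, Proposition~\ref{prop:petal}\ref{petal:derivs} provides the $\sigma$-derivative bound
\[
\prod_{j=a_i}^{b_i - 1} \|\Deriv \f(z_j)\|_\sigma \;=\; \frac{\omega(z_{b_i})}{\omega(z_{a_i})} \lvert (\f^{n_i})'(z_{a_i}) \rvert \;\geq\; K \lvert z_{b_i} - \zeta_i \rvert^\ell n_i^\tau,
\]
since $\sigma = \omega$ on close indices. For an internal cascade ($b_i < k$), the fact that $z_{b_i + 1}$ is far together with the continuity estimate gives $\lvert z_{b_i} - \zeta_i \rvert \geq \eta$, yielding contribution $\geq K\eta^\ell n_i^\tau$. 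For the final cascade (if $b_r = k$), $\lvert z_{b_r} - \zeta_r\rvert = \dpar(z_k)$, giving $\geq K \dpar(z_k)^\ell n_r^\tau$. Every far step-start contributes $\geq \chi$ by Proposition~\ref{prop:metric}\eqref{expfar}, while every other step contributes $>1$ by Proposition~\ref{prop:metric}\eqref{expclose}.

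\textbf{Combining.} I would then split into three cases by pigeonhole. (i) If the final cascade has $n_r \geq k/2$, its bound already gives $\|\Deriv \f^k(z)\|_\sigma \geq K \, 2^{-\tau} \dpar(z_k)^\ell k^\tau$. (ii) If some internal cascade has length $\geq k/2$, its bound gives $\geq K\eta^\ell (k/2)^\tau \geq C k^\tau \geq C \min(\dpar(z_k)^\ell, 1) k^\tau$. (iii) Otherwise every cascade is short, and since consecutive cascades are separated by at least one far index, $N_f$ is comparable to $k$; then $\chi^{N_f}$ dominates $k^\tau$ for $k$ large. The constant $C$ in the conclusion is chosen small enough to absorb the finitely many small $k$.

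\textbf{Main obstacle.} The hard part is case (iii), where many short cascades of length $1$ (or $0$) interleave with few free indices. Each short cascade's bound $K\eta^\ell n_i^\tau$ is just a constant, and the product of $r$ such constants can decay exponentially in $r$. The balance must be struck using the mandatory separation between consecutive cascades, which forces at least one $\chi$-contribution per cascade, so that the combined block ``short cascade $+$ free step'' has multiplicative contribution sufficient to yield the claimed polynomial growth $k^\tau$ in aggregate; verifying this balance precisely is the subtle combinatorial step driving the entire proof.
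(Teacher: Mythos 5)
Your decomposition of the orbit into near-parabolic cascades (estimated via Proposition~\ref{prop:petal}\ref{petal:derivs}) and far steps (estimated via Proposition~\ref{prop:metric}) is exactly the right framework, and matches the paper's strategy. However, your pigeonhole with threshold $k/2$ has a genuine gap, precisely in case (iii). The claim that ``every cascade is short implies $N_f$ is comparable to $k$'' is false: take two cascades each of length roughly $(k-1)/2$, separated by a single far step. Then no cascade has length $\geq k/2$, yet $N_f = 1$, and $\chi^{N_f}$ certainly does not dominate $k^\tau$. (In that particular example the cascade contributions themselves happen to save the day, but your case (iii) as written does not invoke them, and so the argument does not close.) You flag this yourself as the ``subtle combinatorial step,'' and indeed this is where the proof lives; the structure you set up does not resolve it.

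The paper closes the gap differently. Instead of the $k/2$ threshold, it fixes a large constant $k_0$, chosen so that the cascade bound $\alpha_n \defeq K\eps^{\ell} n^{\tau}$ is supermultiplicative: $\alpha_m\alpha_n > \alpha_{m+n}$ for all $m,n\geq k_0$. (This uses $\tau>1$ and is the key arithmetic device.) It also introduces a second, smaller threshold $\delta$, chosen so that $\dpar(w)<\delta$ forces $\dpar(\f^j(w))<\epssig$ for all $0\leq j\leq k_0$; this guarantees that any cascade that dips below $\dpar<\delta$ must either extend to length $\geq k_0$ or hit the last $k_0$ indices of the orbit. The indices are then classified into three groups: those inside a long ($\geq k_0$) cascade, those with $\dpar\geq\delta$ (contributing $\chi$ each), and a residual group confined to the last $k_0-1$ indices (contributing only $>1$, but of bounded cardinality). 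Supermultiplicativity lets the paper combine the contributions of \emph{all} long cascades (however many there are) into a single factor $\alpha_{k_A}$, while a separate inequality $\chi^{k}\geq C'(1+k_0+k)^{\tau}$ converts the $\chi$-contributions into another $\alpha$-factor; multiplying and applying supermultiplicativity once more yields $\alpha_k$, which is $k^{\tau}$ up to a constant. This fixed-threshold plus supermultiplicativity device is what replaces your unworkable $k/2$ pigeonhole, and is the missing ingredient you would need.
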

Observe that the reciprocal of the estimate~\eqref{eq:derivs} is summable over $k$.  
 This property of the estimate will allow us to construct 
  our semiconjugacy in the next section. 
\begin{proof}[Proof of Proposition~\ref{prop:nicederivatives}]
The idea of the proof is as follows. 
 We know that the function $\f$ is uniformly expanding away from the 
 set of parabolic points. This suggests that the worst-case behaviour (in terms of least
  expansion) occurs 
  for pull-backs that spend a long time near a parabolic point, and hence in
 a thin repelling petal. However, along such pull-backs, 
  the derivative does indeed grow at least as described
 by~\eqref{eq:derivs}, by Proposition~\ref{prop:petal}.
 
To fill in the details, 
 let $K > 0$ and $\ell, \tau>1$ be the constants from Proposition~\ref{prop:petal}; 
 recall that we have fixed the constant $\epssig$ from Proposition~\ref{prop:metric}. 

First we introduce a number of constants. Begin by choosing a positive $\eps<\epssig$ with
 the following property: if 
  $\dpar(w) < \eps$, then $\dpar(\f(w)) <\epssig$. 
 This is possible as $\Par(\f)$ is finite and $f$ is continuous;
  we may suppose that $\eps\leq 1$ and $K\cdot \eps^{\ell}\leq 1$. Now define 
\[
\alpha_k \defeq K \eps^{\ell} k^{\tau} \leq k^{\tau}, \quad\text{for } k \in \N,
\]
and choose $k_0 \in \N$ sufficiently large that
\begin{equation}
\label{alphadef}
\alpha_k \alpha_m > \alpha_{k+m}, \qfor k, m \geq k_0.
\end{equation}
Then, similarly as in the choice of $\eps$ above,
 choose $\delta \in (0, \epssig)$ sufficiently small that 
\begin{equation}
\label{eq:delsig}
  \dpar(\f^j(w)) < \epssig, \qfor \dpar(w)< \delta \text{ and } 0 \leq j \leq k_0.
\end{equation}
 Let $\chi = \chi(\delta) > 1$ be the constant from Proposition~\ref{prop:metric}. We then choose $C'>0$ sufficiently small that
\begin{equation}\label{eqn:Cprime}
\chi^{k} \geq C'(1+k_0+k)^\tau, \qfor k \in \N.
\end{equation}
This completes the choice of constants.

Now, suppose that $z \in \tilde{S}$ and $k \geq 1$ satisfy the assumptions of the proposition. Define $z_j \defeq \f^j(z)$, for $j \geq 0$, so that
\[
\|\Deriv\f^k(z)\|_\sigma  = \prod_{j=0}^{k-1} \|\Deriv\f(z_j)\|_\sigma.
\]

\begin{figure}
	\includegraphics[width=14cm,height=8cm]{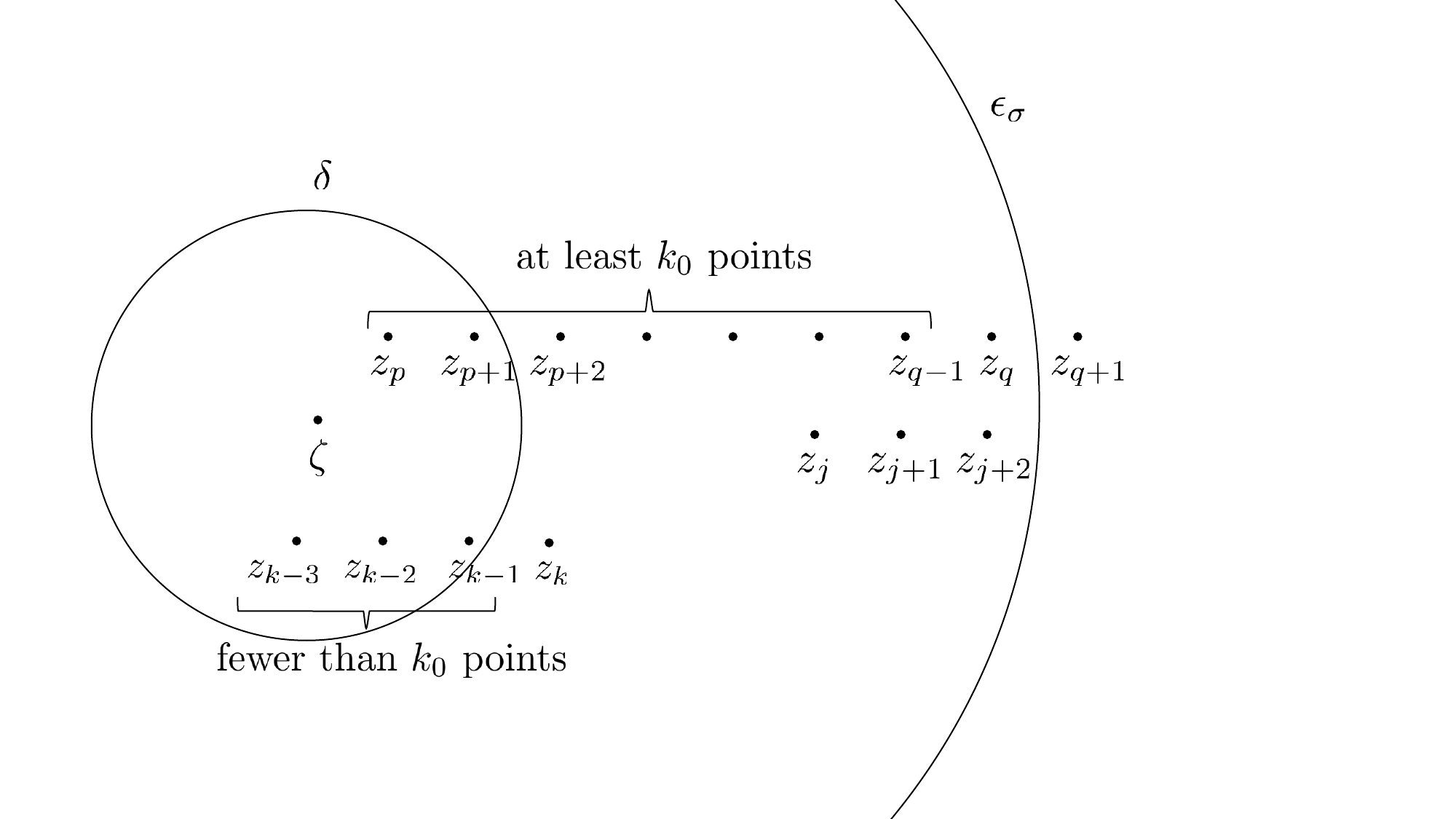}
  \caption{An approximate schematic of the three cases in the proof of Proposition~\ref{prop:nicederivatives}; the top row represents case~\ref{en:longblock}, the middle row case~\ref{en:far}, and bottom row case~\ref{en:null}.}\label{f3}
\end{figure}

For each $0 \leq j < k$ exactly one of the following three cases holds; see Figure~\ref{f3}.
\begin{enumerate}[(A)]
\item There exist $p, q \geq 0$ such that the following all hold:\label{en:longblock}
\begin{itemize}
\item $0 \leq p \leq j < q \leq k$;
\item $k_0 \leq q - p$;
\item $\dpar(z_p) < \delta$;
\item $\dpar(z_{j'}) < \epssig$ for $p \leq {j'} \leq q$.
\end{itemize} 
\item Case \ref{en:longblock} does not hold, and $\dpar(z_j) \geq \delta$.\label{en:far}
\item Cases \ref{en:longblock} and \ref{en:far} do not hold, and in particular $\dpar(z_j) < \delta$.\label{en:null}
\end{enumerate}

Let $k_A$, $k_B$ and $k_C$ respectively be the number of values of $j$ for which each of the three cases above hold. Clearly $k_A + k_B + k_C = k$. Moreover, it follows from our choice of $\delta$ that case~\ref{en:null} can only occur if $j>k-k_0$; 
 in partiuclar, $k_C< k_0$.

We estimate $\|\Deriv\f(z_j)\|_\sigma$ in each of these three cases. In the case \ref{en:null}, we only
 use that $\|\Deriv\f(z_j)\|_\sigma > 1$ by \eqref{expclose}. Estimating 
 $\|\Deriv\f(z_j)\|_\sigma$ in the case \ref{en:far} is also straightforward. Since $\dpar(z_j) \geq \delta$ we have, by \eqref{expfar}, that $\|\Deriv\f(z_j)\|_\sigma \geq \chi$. 

In particular, if $k_A = 0$, then by~\eqref{eqn:Cprime} and since $k_0 > k_C$, 
\begin{equation}
\label{eq:ineq1}
\|\Deriv\f^k(z)\|_\sigma  = \prod_{j=0}^{k-1} \|\Deriv\f(z_j)\|_\sigma
                     \geq \chi^{k_B} 
		 								 \geq C' (1+k_0+k_B)^\tau 
			 							 \geq C' k^\tau
\end{equation} 
and the proof is complete. 

Hence it remains to suppose that $k_A > 0$, where the more complicated case \ref{en:longblock} in fact occurs. 
Suppose that the conditions and terminology of that case all hold. Choose $p$ minimal and $q$ maximal with the
properties stated in~\ref{en:longblock}. 
Note that if $q < k$, then $\dpar(z_{q+1}) \geq \epssig$, and 
hence $\dpar(z_q) \geq \eps$. 

Now 
\begin{equation}
\prod_{j=p}^{q-1} \|\Deriv\f(z_j)\|_\sigma =
 \| \Deriv \f^{q-p}(z_p)\|_{\sigma}   = \label{eq:quantity}
\frac{\sigma(z_q)}{\sigma(z_p)} \cdot |(\f^{q-p})'(z_p)|. 
 = \frac{\omega(z_q)}{\omega(z_p)} \cdot |(\f^{q-p})'(z_p)|. 
\end{equation}

Let $\zeta\in\Par(\f)$ be the parabolic fixed point closest to
 $z_p$. Suppose that $p \leq j \leq q$.
Since $z_j \in S$, it follows by Proposition~\ref{prop:parabolic}\ref{4.2:wbpetal} that $z_j$ lies in a thin repelling sector at $\zeta$. Let $A$ be the thin repelling sector containing $z_p$. It then follows, by Proposition~\ref{prop:petal}\ref{petal:forwards} that $z_j \in P$, for $p \leq j \leq q$. 
Hence we can apply
Proposition~\ref{prop:petal}\ref{petal:derivs}, with $z=z_p$ and $n=q-p+1$,
 and obtain
   \[ \prod_{j=p}^{q} \|\Deriv\f(z_j)\|_\sigma \geq 
        K \cdot \lvert z_q - \zeta \rvert^{\ell} \cdot (q-p)^r \geq 
        \alpha_{q-p}\cdot \left(\frac{\dpar(z_q)}{\eps}\right)^{\ell}. \]

Note that $k_A > 0$ implies $k-k_B-k_C=k_A\geq k_0.$ 
 It follows by~\eqref{alphadef} and~\eqref{eqn:Cprime} that
\begin{align*}
\|\Deriv\f^k(z)\|_\sigma  = \prod_{j=0}^{k-1} \|\Deriv\f(z_j)\|_\sigma 
          &\geq \chi^{k_B} \cdot \alpha_{k_A}
             \cdot \min\left(1,\frac{\dpar(z_k)^{\ell}}{\eps^{\ell}}\right)  \\
		 								 &\geq C'(1+k_0+k_B)^\tau \cdot
		 								   \alpha_{k_A}\cdot
       \min(1,\dpar(z_k)^{\ell}) \\
&\geq C' \cdot \alpha_{k_0+k_B} \cdot \alpha_{k_A}
  \cdot  \min(1,\dpar(z_k)^{\ell}) \\ &\geq
       C'\cdot \alpha_k \cdot \min(1,\dpar(z_k)^{\ell}) \\ &=
       (C'\cdot K \cdot \eps^{\ell}) \cdot \min(1,\dpar(z_k)^{\ell}) \cdot
       k^{\tau}. \qedhere
\end{align*}
\end{proof}
\begin{rmk}\label{rmk:expansionell}
 It follows from the proof, and Proposition~\ref{prop:nearparabolic},
   that we may let $\ell$ depend on $z_n$, 
   taking $\ell = p + 1 - s_{\sigma} = p + 1/n_{\sigma}$, where
   $p+1$ is the multiplicity of a parabolic fixed point $\zeta$ closest to $\f^n(z)$. 
   We use this observation in Section~\ref{S.others}. 
\end{rmk}
\section{Docility}
\label{S.conjugacy}
We now use our earlier results to prove 
  Theorem~\ref{thm:geometricallyfinitedocile}. Note that we now, in general, work directly with $f$, rather than the iterate $\f = f^{\n}$ which was used in the two 
  preceding chapters. We continue to use the terminology and definitions from earlier in the paper, often without comment.
\begin{proof}[Proof of Theorem~\ref{thm:geometricallyfinitedocile}]
  Let us use the notation from Section~\ref{sec:docility}. Recall that we chose 
    $K>0$ sufficiently large that $P(f)\subset B(0,K)$, and then chose $L>K$ sufficiently large to
    ensure that, for $\lambda = K/L$, 
    $g\colon z\mapsto f(\lambda z)$ is of disjoint type. 

  We may additionally suppose that $K>2$, and that $K$ was chosen sufficiently large that  
\begin{equation}
\label{eq:choice}
P(f) \cup \bigcup_{n=0}^{\n} f^n\left(\overline{D \cup D'} \cup \{z \colon \dpar(z) < \epssig\}\right) \subset B(0, K/2).
\end{equation}
This is possible since all the sets in the union on the left hand side of \eqref{eq:choice} are bounded. Note that the definition of $K$ ensures that if $0 \leq n \leq \n$ and $|f^n(z)| \geq K/2$, then $z \in S$ and $\dpar(z) \geq \epssig$, and so $\sigma(z)$ is defined and equal to $\rho_\Orb(z)$. We will make frequent use of this observation.

We may also suppose that $L$ is so large that 
\[
B(0, L) \supset f(\overline{B(0, K+1)}).
\]
 Recall that in Section~\ref{sec:docility} we defined
 a sequence $(\vartheta^k)_{k \geq 0}$ of conformal isomorphisms 
\[ \vartheta^k \colon \mathcal{U}^{k} \defeq g^{-k}(\C\setminus \overline{B(0,L)}) \to f^{-k}(\C\setminus \overline{B(0,L)}) \eqdef \mathcal{T}^k, \qfor j \geq 1,
\]
such that
\[
f \circ \vartheta^{k} = \vartheta^{k-1} \circ g, \qfor k \geq 1. \]
For $z\in \mathcal{U}^k$, let $\gamma^k(z) = \{ \theta^t(z)\colon t\in [k,k+1]\}$, where $(\theta^t)_{t\in [k,k+1]}$ is the isotopy between
   $\theta^k$ and $\theta^{k+1}$ used in the definition of $\theta^{k+1}$. 
    Then $\gamma^k(z)\subset \C\setminus P(f)$ is a curve connecting $\theta^k(z)$ and $\theta^{k+1}(z)$. 
    Note that, for $k=0$, $\gamma^0(z)$ is the straight line segment connecting
   $z=\theta^0(z)$ and $\lambda z = \theta^1(z)$. For $k>1$, 
   $\gamma^k(z)$ is the connected component of
   $f^{-k}(\gamma^0(g^k(z)))$ containing $\theta^k(z)$. (See Figure~\ref{f1}.) 

\begin{figure}
	\includegraphics[width=14cm,height=8cm]{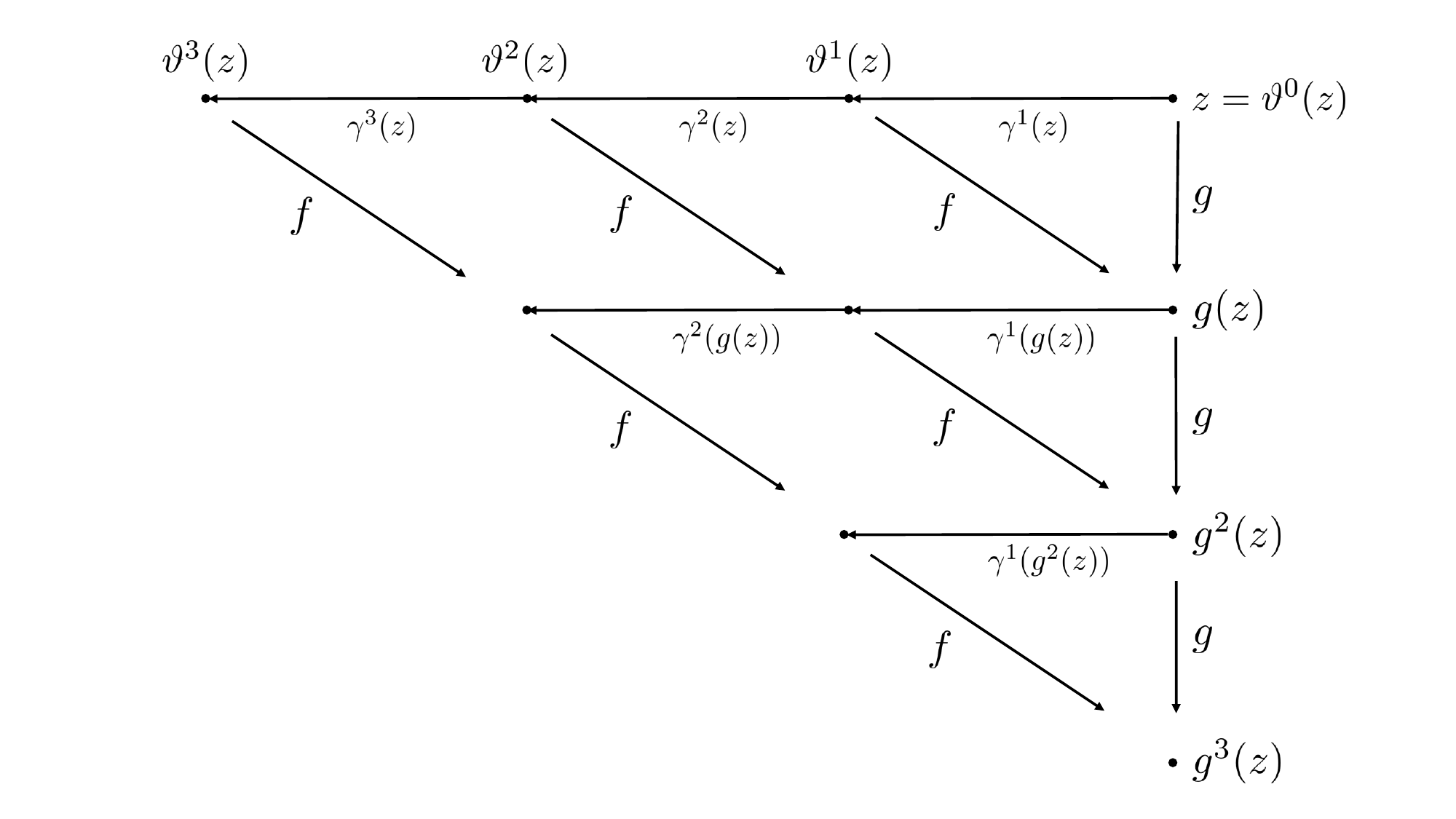}
  \caption{A schematic of the functions and curves used in the proof of Theorem~\ref{thm:geometricallyfinitedocile}.}\label{f1}
\end{figure}

Our goal is to show that, for each $k$,  the $\sigma$-length 
    $\ell_{\sigma}(\gamma^k(z))$ of $\gamma^k(z)$ is bounded independently of $z\in J(g)$, with the bound summable over $k$. 
    This in turn means that the maps $\vartheta^j|_{J(g)}$ form a Cauchy sequence with respect to the metric $\sigma$. 
    Indeed, let $d_{\sigma}(z_1,z_2)$ denote the $\sigma$-distance between points 
   $z_1, z_2 \in S$, i.e.\ the infimum over the length of all curves connecting 
   $z_1$ and $z_2$. 
   If $z \in \mathcal{U}^{k+1}\subset \mathcal{U}^k$, then by construction, 
\begin{equation}
\label{eq:distances}
d_\sigma(\vartheta^{k+1}(z), \vartheta^k(z)) \leq \ell_\sigma(\gamma^{k+1}(z)).
\end{equation}

We begin by estimating the length of $\gamma^k(z)$ for $k=0$. 

\begin{claim}
There is a constant $\alpha > 0$ such that $\ell_\sigma(\gamma^0(z)) \leq \alpha$ whenever $z \in \mathcal{U}^0$. 
\end{claim}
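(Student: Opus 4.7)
The plan is to exploit two facts: that the straight segment $\gamma^0(z)$ from $z$ to $\lambda z$ lies along a ray through the origin, and that it stays uniformly far from both $\overline{D\cup D'}$ and $\Par(f)$, so that $\sigma$ coincides with the orbifold metric $\rho_\Orb$ throughout. I would first observe that for $z\in\mathcal{U}^0$ one has $|z|>L$ and $|\lambda z|=K|z|/L > K$, hence $\gamma^0(z) \subset \{|w|\geq K\}$; then by~\eqref{eq:choice} every point $w$ of the segment satisfies $w\in S$ and $\dpar(w)\geq\epssig$, and so $\sigma(w) = \rho_\Orb(w)$ along the entire segment.

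To estimate $\rho_\Orb$ there, I would note that the inclusion of $\C\setminus\overline{B(0,K/2)}$, viewed as an orbifold with trivial ramification, into $\Orb$ is holomorphic. Corollary~\ref{corr:dist} then yields the explicit bound
$$\rho_\Orb(w) \leq \rho_{\C\setminus\overline{B(0,K/2)}}(w) = \frac{1}{|w|\log(2|w|/K)}, \qfor |w| \geq K.$$
This is the only nontrivial analytic input: a uniform upper bound on the orbifold density near infinity.

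The final step is a direct integration along the ray. Parametrising $\gamma^0(z)$ by $r\in[\lambda|z|, |z|]$ with $w(r) = r e^{i\arg z}$, the substitution $u = \log(2r/K)$ gives
$$\ell_\sigma(\gamma^0(z)) \leq \int_{\lambda|z|}^{|z|}\frac{dr}{r\log(2r/K)} = \log\frac{\log(2|z|/K)}{\log(2|z|/L)}.$$
This last quantity is a continuous function of $|z|$ on $[L,\infty)$, finite at $|z|=L$ (where it equals $\log(\log(2L/K)/\log 2)$) and tending to $\log 1 = 0$ as $|z|\to\infty$. Consequently it is bounded by some absolute constant $\alpha > 0$, proving the claim.

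I do not anticipate a genuine obstacle here; the content is really a short geometric calculation together with the standard orbifold-metric comparison. The one feature worth flagging is that although the Euclidean length of $\gamma^0(z)$ grows linearly with $|z|$, the orbifold-hyperbolic density on the complement of a bounded disc decays like $1/(r\log r)$, which is exactly the rate needed to keep the integral bounded and in fact force it to $0$ as $|z|\to\infty$. This is ultimately why $\gamma^0$ contributes only a finite (indeed uniformly bounded) summand to the telescoping estimate of $d_\sigma(\vartheta^{k+1}(z),\vartheta^k(z))$.
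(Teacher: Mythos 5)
Your argument is correct and follows essentially the same route as the paper's: both show that $\gamma^0(z)$ stays outside $\overline{B(0,K)}$ so that $\sigma=\rho_\Orb$ there, then bound $\rho_\Orb$ by the hyperbolic density of $\C\setminus\overline{B(0,K/2)}$ via Corollary~\ref{corr:dist}, and integrate along the radial segment. The only difference is cosmetic: the paper first crudely bounds $\log(2|\zeta|/K)\geq \log 2$ on the segment and integrates $1/(|\zeta|\log 2)$ to get $\alpha = \log(1/\lambda)/\log 2$, whereas you integrate $1/(|\zeta|\log(2|\zeta|/K))$ exactly to obtain $\log\bigl(\log(2|z|/K)/\log(2|z|/L)\bigr)$, which is the slightly sharper observation that $\ell_\sigma(\gamma^0(z))$ not only is bounded but actually tends to $0$ as $|z|\to\infty$.
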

\begin{subproof}
Suppose that $z \in \mathcal{U}^0$. By choice of $K$,
\[
\gamma^0(z) \subset \C\setminus\overline{B(0,K)} \subset \C\setminus\overline{B(0,K/2)} \subset \{ z\in S\colon \dpar(z) \geq \epssig\}.
\]
In particular, by the definition of $\sigma$, $\ell_\sigma(\gamma^0(z)) = \ell_{\Orb}(\gamma^0(z))$. Since all ramified points of $\Orb$ lie in $B(0, K/2)$, it is a consequence of Proposition~\ref{prop:dist} that we can estimate $\ell_{\Orb}(\gamma^0(z))$ from 
above using the hyperbolic metric on $\C \setminus \overline{B(0, K/2)}$. This is given by
$$
\frac{|\deriv \zeta|}{|\zeta|\log \frac{2|\zeta|}{K}};
$$ 
see e.g.~\cite[Example~9.10]{hayman}. For $\zeta\in \gamma^0(z) \subset \C\setminus \overline{D(0,K)}$, the denominator
 is bounded below by $\log 2\cdot \lvert \zeta\rvert$. So
 \[ \ell_{\Orb}(\gamma^1(z)) \leq \frac{\log \lambda}{\log 2} \eqdef \alpha.\] 
This completes the proof of the claim.
\end{subproof}

\begin{claim} 
We next claim that
\begin{equation}
\label{eq:contraction}
\ell_\sigma(\gamma^j(z)) \leq \alpha, \quad\text{for } z \in J(g) \text{ and } 0 \leq j <
  \n. 
\end{equation}
\end{claim}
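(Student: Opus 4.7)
The plan is to reduce the estimate on $\ell_\sigma(\gamma^j(z))$ for $j<\n$ and $z\in J(g)$ to the bound $\alpha$ already proved for $\gamma^0$, by showing that such curves remain in the region where the modified metric $\sigma$ coincides with the pure orbifold metric $\rho_\Orb$, and then invoking the standard orbifold contraction of $f$.

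First I would verify that $\gamma^j(z)\subset\{w\in S\colon\dpar(w)\geq\epssig\}$ for every $z\in J(g)$ and every $0\leq j\leq\n-1$. For any $w\in\gamma^j(z)$, the construction gives $f^j(\gamma^j(z))=\gamma^0(g^j(z))$, which is the segment from $g^j(z)$ to $\lambda g^j(z)$. Since $z\in J(g)\subset\mathcal{U}^0$ we have $|g^j(z)|\geq L$, so the entire segment lies on the ray from $0$ through $g^j(z)$ at modulus at least $\lambda L=K$; in particular $|f^j(w)|\geq K>K/2$. The contrapositive of \eqref{eq:choice} then excludes $w$ from both $\overline{D\cup D'}$ and $\{\dpar<\epssig\}$, since any such point would satisfy $|f^n(w)|<K/2$ for every $0\leq n\leq\n$, contradicting the case $n=j$. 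Hence $\sigma\equiv\rho_\Orb$ along $\gamma^j(z)$, so $\ell_\sigma(\gamma^j(z))=\ell_\Orb(\gamma^j(z))$.

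Next I would pull the length bound back $j$ times. Since $f^i(\gamma^j(z))=\gamma^{j-i}(g^i(z))$ and each pair $(g^i(z),j-i)$ again satisfies the hypotheses of the previous paragraph, every intermediate curve lies in $S$, and in particular $\gamma^j(z)\subset f^{-1}(S)=S'$. By Proposition~\ref{prop:orbifold} the map $f\colon\Orb'\to\Orb$ is an orbifold covering, hence a local isometry between $\rho_{\Orb'}$ and $\rho_\Orb$, while Corollary~\ref{corr:dist} gives $\rho_{\Orb'}\geq\rho_\Orb$ on $S'$. Thus for any curve $\eta\subset S$ and any component $\eta'\subset S'$ of its $f$-preimage, $\ell_\Orb(\eta')\leq\ell_{\Orb'}(\eta')=\ell_\Orb(\eta)$. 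Iterating this inequality along the chain $\gamma^0(g^j(z))\leftarrow\gamma^1(g^{j-1}(z))\leftarrow\cdots\leftarrow\gamma^j(z)$ and applying the previous claim to the first term yields $\ell_\sigma(\gamma^j(z))=\ell_\Orb(\gamma^j(z))\leq\ell_\Orb(\gamma^0(g^j(z)))=\ell_\sigma(\gamma^0(g^j(z)))\leq\alpha$.

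The main obstacle lies entirely in the first step, and it is a careful bookkeeping matter rather than a genuine difficulty: one has to confirm that the safety margin built into \eqref{eq:choice}, combined with the restriction $j<\n$, is wide enough to keep the full curve $\gamma^j(z)$, not merely its endpoints, away from both $\overline{D\cup D'}$ and the neighbourhood of $\Par(f)$ where $\sigma$ has been altered. This is precisely why the union in \eqref{eq:choice} extends up to index $\n$. Once the localisation is in place, the rest is a clean application of the orbifold contraction principle, and it is only the range $j\geq\n$ that will demand the genuinely new expansion estimates from Section~\ref{S.expanding}.
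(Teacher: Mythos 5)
Your argument is correct and matches the paper's proof in all essentials: both reduce to the key observation that~\eqref{eq:choice}, together with $|f^j(\gamma^j(z))| \geq \lambda L = K$, forces $\gamma^j(z)$ (for $j < \n$) into the region where $\sigma = \rho_{\Orb}$, and then both pull back the bound $\alpha$ one step at a time using the orbifold-covering contraction (the paper cites Proposition~\ref{prop:expansion} directly, while you reconstruct the same estimate from Proposition~\ref{prop:orbifold} and Corollary~\ref{corr:dist}, which is exactly how Proposition~\ref{prop:expansion} is proved). The only cosmetic difference is that the paper phrases the pullback as an induction on $j$ whereas you iterate along the chain explicitly.
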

\begin{subproof}
We prove the claim by induction on $j$; note that we have just proved \eqref{eq:contraction} when $j=0$.
So we can assume that $0<j<\n$ and that the claim holds for $j-1$. Let $z\in J(g)$. Recall that $f$ maps 
$\gamma^j(z)$ to $\gamma^{j-1}(g(z))$ in one-to-one fashion, and that $f^{j-1}(z) = \gamma^0(g^{j}(z))\subset \C\setminus \overline{D(0,K)}$.  
By choice of $K$, and since $j < \n$, this means that $\gamma^j(z)$ and $\gamma^{j-1}(g(z))$ lie in the region where $\sigma$
agrees with the orbifold metric $\rho_{\Orb}$. Hence we can apply Proposition~\ref{prop:expansion}, and see
 that indeed
\[
\ell_\sigma(\gamma^j(z)) = \ell_{\Orb}(\gamma^j(z)) \leq \ell_{\Orb}(\gamma^{j-1}(g(z))) = \ell_\sigma(\gamma^{j-1}(g(z))) \leq \alpha. \qedhere
\] 
\end{subproof}

Now, suppose that $z \in J(g)$ and that $k \geq \n$. Write $k = \tilde{k}\n + p$, 
where $\tilde{k}\geq 1$ and $0 \leq p < \n$. 
Observe that $\gamma^{k}(z)$ 
is the pullback of $\gamma^p(g^{k-p}(z))$ by an 
inverse branch of $\f^{\tilde{k}}=f^{\tilde{k}m}$.
Moreover, $\gamma^p(g^{k-p}(z))\subset \C\setminus \overline{D(0,K)}\subset S$. Hence 
the hypotheses of Proposition~\ref{prop:nicederivatives} are satisfied 
 for points in $\gamma^k(z)$. It follows by Proposition~\ref{prop:nicederivatives}, together with \eqref{eq:contraction}, that
\[
\ell_\sigma(\gamma^k(z)) \leq \frac{\alpha}{C \epssig^\ell \tilde{k}^\tau},
\]
where $C >0$ and $\ell, \tau > 1$ are the constants from Proposition~\ref{prop:nicederivatives}. Since these bounds are independent of $z$ and summable 
over $k$, it follows from~\eqref{eq:distances} that the maps $\vartheta^k|_{J(g)}$ form a
 Cauchy sequence with respect to the metric $\sigma$. 

By definition of $\sigma$, the density of $\sigma$ with respect
  to the spherical metric on $\Ch$ is uniformly bounded from below. In particular,
  $\vartheta^k|_{J(g)}$ also forms a Cauchy sequence with respect to the spherical metric,
  and hence converges uniformly to a function $\vartheta$ in this metric. 
  By Observation~\ref{obs:docileconvergence}, $f$ is docile. 
\end{proof}

\begin{proof}[Proof of Corollary~\ref{cor:main}]
  The corollary is an immediate consequence of Theorem~\ref{thm:geometricallyfinitedocile}
    and Proposition~\ref{prop:docilesemiconjugacy}.
\end{proof}
\begin{proof}[Proof of Corollary~\ref{cor:pinched}]
 We use Corollary~\ref{cor:main}. 
 By~\cite{brushinghairs}, the Julia set $J(g)$ of the disjoint-type function $g$ is
  a Cantor Bouquet. The Julia set $J(f)$ can be topologically described as 
  the quotient of $J(g)$ by the closed equivalence relation
      \[ z \equiv w \qquad\Longleftrightarrow\qquad \theta(z)=\theta(w). \]
  Since $\theta\colon I(g)\to I(f)$ is a bijection, any nontrivial equivalence
     class is contained in the set of non-escaping points of $g$, which 
     is a subset of the endpoints of the Cantor bouquet. (See \cite[Theorem~5.10]{RRRS}.) 
     
   Furthermore, $\theta$ preserves the cyclic order of hairs at infinity.
     (This is easy to see from the construction, but also follows from the
       fact that, for large $R$ as in Theorem~\ref{thm:boettcher}, the restriction
       of $\theta$ to $J_{\geq R}(g)$ extends to a quasiconformal homeomorphism
       of the complex plane by \cite{rigidity}.) Hence $J(g)$ is indeed a pinched Cantor Bouquet. 
\end{proof}

\begin{proof}[Proof of Corollary~\ref{cor:components}]
 The escaping set $I(g)$ has uncountably many connected components
    (see e.g.\  \cite[Proposition~3.10 and Corollary~3.11]{lassearclike}, and 
   $I(f)$ is homeomorphic to $I(g)$ by Corollary~\ref{cor:main}.
\end{proof}

To conclude the section we prove Theorem~\ref{theo:connected} and Theorem~\ref{theo:exponential}.
\begin{proof}[Proof of Theorem~\ref{theo:connected}]
Suppose that $f$ is geometrically finite, and that $U \defeq F(f) \ne \emptyset$ is connected. By assumption $U \cap S(f)$ is compact, and it is well-known that $U$ is simply-connected. Hence, by \cite[Corollary 8.5]{lassedave}, $S(f) \subset U$. In particular $f$ is strongly geometrically finite.

Let $\vartheta$ be the semiconjugacy from Corollary~\ref{cor:main}. Suppose, by way of contradiction, that $\vartheta$ is not injective. Then there are two distinct points $z_1, z_2 \in J(g)$ such that $\vartheta(z_1) = \vartheta(z_2)$. Let $C_1$ and $C_2$ be the connected components of $J(g)$ containing $z_1$ and $z_2$ respectively. Note that $C_1 \ne C_2$ by Corollary~\ref{cor:main}\ref{mainhomeooncomponents}.

The set of non-escaping points in any component of $J(g)$ has zero Hausdorff dimension, see \cite[Theorem 2.3]{lassearclike}, and therefore is totally disconnected. Moreover, $\vartheta$ is an injection on the escaping set of $g$. Thus $\vartheta(C_1) \cap \vartheta(C_2)$ is totally disconnected. It is known that the union of two non-separating, closed, connected subsets of the sphere separates the sphere if and only if their intersection is disconnected; see \cite{mullikin}. 
Hence $\vartheta(C_1) \cup \vartheta(C_2) \subset J(f)$ separates the plane. This is a contradiction, because the Julia set of $f$ is nowhere dense, and so the fact that the Fatou set of $f$ is connected implies that no closed subset of the Julia set of $f$ can separate the plane.

The final claim of the theorem is immediate (again using the fact that the conjugacy
preserves the order of hairs).
\end{proof}

\begin{proof}[Proof of Theorem~\ref{theo:exponential}]
We use the established notions of the symbolic dynamics of exponential maps; see, for example, \cite{expcombinatorics} for definitions. As in \cite{expcombinatorics},
 we shall use the parameterisation of exponential maps as 
    \[ f_{\kappa}\colon z\mapsto e^z+\kappa, \kappa\in\C. \]
      Note that $E_a$ as in~\eqref{eq:exdef} 
      is conformally conjugate to $f_{\kappa}$ if and only if
      $a = e^{\kappa}$. As first proved in \cite{expescaping}, 
      the escaping set $I(f_{\kappa})$ is an uncountable union of injective curves to infinity, called \emph{hairs} or \emph{dynamic rays}. These hairs can be described as 
      the path-connected component of $I(f_{\kappa})$ \cite[Corollary~4.3]{frs}. They are identified by infinite sequences of integers called \emph{external addresses}, 
      and the lexicographical ordering of addresses corresponds to the vertical ordering of hairs; 
     see \cite[Section~2]{expcombinatorics}. If $\kappa_1,\kappa_2\in\C$ are parameters for which the singular values do not belong to the escaping sets, then
     there is a natural bijection $\theta_{\kappa_1 \to \kappa_2}\colon I(\kappa_1) \to I(\kappa_2)$ that preserves the external addresses of 
     hairs \cite[Theorem~1.1]{topescaping}. 

  If $\kappa_1$ is of disjoint type, then the map $\theta_{\kappa_1\to \kappa_2}$ is, by construction, the same bijection
     as in Theorem~\ref{thm:naturalbijection}, up to a topological conjugacy between the two disjoint-type functions on their Julia sets.
(We omit
    the details.) In particular, $f_{\kappa_2}$ is docile if and only if $\theta_{\kappa_1\to \kappa_2}$ extends continuously to $J(f_{\kappa_1})\cup \{\infty\}$. 
    Let us fix a disjoint-type parameter $\kappa_1$ for the remainder of the proof. 
       
   If $f_{\kappa}$ has an attracting or parabolic orbit, then there is a unique associated 
   \emph{intermediate external address} $\addr(\kappa)$; see  \cite[Section~2]{expcombinatorics} or \cite[Lemma~3.3]{attractingexp}.  
     If $\kappa$ has an attracting orbit, i.e.\ if $\kappa$ is hyperbolic, then 
     this address completely determines the dynamics on the Julia set as follows: Two hairs of $E_{\kappa}$ land together 
     if and only if their external addresses have the same \emph{itinerary} 
    with respect to $\addr(\kappa)$ \cite[Proposition 9.2]{topescaping}. In particular,
     $\addr(a)$ determines completely which endpoints of the Cantor bouquet $J(f_{\kappa_1})$ are
      identified by the extension of $\theta_{\kappa_1\to\kappa}$. 
 
The proof of \cite[Proposition 9.2]{topescaping} uses contraction properties of a suitable hyperbolic metric. Replacing the hyperbolic metric by our metric $\sigma$, 
 and using Proposition~\ref{prop:nicederivatives}, 
   we see that the same result holds for any exponential map with a parabolic cycle. Now let $\kappa_0\in\C$ be such a parabolic parameter. 
   Then by~\cite[Theorem~3.5]{attractingexp}, there is a unique hyperbolic component $W$ consisting of parameters $\kappa$ with $\addr(\kappa)=\addr(\kappa_0)$.
   (In \cite{attractingexp}, the theorem is stated for the family $E_{a}$; see~\cite[Proposition~4.2]{expcombinatorics} for the corresponding statement in the $\kappa$-plane.)
   By Theorem~\ref{thm:geometricallyfinitedocile} and the above, the bijection 
   $\theta_{\kappa_0\to\kappa} = \theta_{\kappa_1\to\kappa}\circ \theta_{\kappa_1\to\kappa_0}^{-1} $ of escaping sets extends to a conjugacy
    $J(\kappa_0)\cup\{\infty\} \to J(\kappa)\cup\{\infty\}$. By construction, this conjugacy preserves external addresses, and therefore the vertical order of hairs.
    This shows that $W$ has the property claimed in the statement of Theorem~\ref{theo:exponential}. 
    Moreover, by~\cite[Corollary~5.5]{expcombinatorics}, the component $W$ is the unique ``child component'' of the parameter $\kappa_0$, and in particular
   $\kappa_0\in \partial W$. 

 To show uniqueness, let $\kappa_0$ be as above, and suppose that $f_{\kappa_0}$ and $f_{\kappa}$ are conjugate via a conjugacy that preserves the
   vertical order of hairs. Then by \cite[Proof of~Corollary~8.4]{topescaping}, there is $k\in\Z$ such that, for $\kappa' = \kappa + 2\pi i k$, the map
   $\theta_{\kappa_0\to \kappa'}$ extends to a conjugacy between $f_{\kappa_0}$ and $f_{\kappa'}$ on their Julia sets. In particular, the hairs at two given
   external addresses land together for $f_{\kappa_0}$ if and only if they do so for $f_{\kappa'}$. It follows that
   $f_{\kappa_0}$ and $f_{\kappa'}$ have the same \emph{characteristic addresses} \cite[Lemma~3.3 and Theorem~3.4]{expcombinatorics}, which in turn implies that
   $\addr(\kappa_0)=\addr(\kappa')$  \cite[Lemma~3.10]{expcombinatorics}. In other words, $\kappa'$ belongs to the hyperbolic component $W$ at address $\addr(\kappa_0)$. 
   As $\kappa$ and $\kappa'$ correspond to the same parameter $a=\exp(\kappa)$ in our original parameterisation of the exponential family, the proof of the Theorem is complete. 
\end{proof}

\section{Fatou components and local connectivity of the Julia set}
\label{S.others}
We frequently use the following, which combines parts of 
Propositions~2.8 and~2.9 of~\cite{BFR}.
\begin{prop}
\label{prop:bfr}
Suppose that $f$ is an entire function, that $\Delta \subset \C$ is a simply connected domain, and that $\tilde{\Delta}$ is a component of $f^{-1}(\Delta)$. Then exactly one of the following holds.
\begin{enumerate}[(a)]
\item The map $f : \tilde{\Delta} \to \Delta$ is proper, and hence has finite degree. \label{case:fin} 
\item For $w \in \Delta$, with at most one exception, $f^{-1}(w) \cap \tilde{\Delta}$ is infinite. Also, either $\tilde{\Delta}$ contains an asymptotic curve corresponding to an asymptotic value in $\Delta$, or $\tilde{\Delta}$ contains infinitely many critical points. Moreover, if $\Delta \cap S(f)$ is compact, then infinity is accessible in $\tilde{\Delta}$.\label{case:inf}
\end{enumerate}
If, in addition, $\Delta \cap S(f)$ is a singleton, then $\tilde{\Delta}$ contains at most one critical point of $f$.
\end{prop}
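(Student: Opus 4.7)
The plan is to split the argument into four stages, distinguishing the ``proper'' and ``non-proper'' cases by a compactness argument, and then analysing the non-proper case via the covering-space structure of $f$ over $\Delta\setminus S(f)$. If $f\colon\tilde{\Delta}\to\Delta$ fails to be proper, there is a compact $K\subset\Delta$ and a sequence $(z_n)\subset\tilde{\Delta}$ leaving every compact subset of $\tilde{\Delta}$ with $f(z_n)\in K$. A finite accumulation point $z^*\in\C$ of $(z_n)$ would satisfy $z^*\in\partial\tilde{\Delta}$ and $f(z^*)\in\partial\Delta$ (since $\tilde{\Delta}$ is a component of $f^{-1}(\Delta)$), contradicting $f(z_n)\in K$ by continuity; so $z_n\to\infty$. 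When $\Delta\cap S(f)$ is compact, taking $K$ disjoint from $S(f)$ and lifting paths through the tails of $(z_n)$ produces an arc in $\tilde{\Delta}$ tending to infinity, establishing accessibility.

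For the infinite-preimage clause of~\ref{case:inf}, the restriction $f\colon\tilde{\Delta}\setminus f^{-1}(S(f))\to\Delta\setminus S(f)$ is an unbranched covering by the definition of $S(f)$ together with the fact that $\tilde{\Delta}$ is a preimage component; hence the preimage cardinality is a constant $d\in\N\cup\{\infty\}$ on the connected base, and is at most $d$ at critical values of $f$ in $\Delta$. A finite $d$ together with finitely many critical points would yield a proper branched covering $f\colon\tilde{\Delta}\to\Delta$, putting us in case~\ref{case:fin}; so $d=\infty$. Values in $\Delta$ with strictly fewer preimages in $\tilde{\Delta}$ occur only where all sheets coalesce under ramification, which a local count forces for at most one value of $w$, giving the ``at most one exception''.

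The asymptotic-curve-or-infinitely-many-critical-points dichotomy is the main obstacle. Assume $\tilde{\Delta}$ contains only finitely many critical points $c_1,\ldots,c_m$ of $f$ and no asymptotic curve corresponding to a value in $\Delta$. Then $f\colon\tilde{\Delta}\setminus\{c_1,\ldots,c_m\}\to\Delta\setminus\{f(c_1),\ldots,f(c_m)\}$ is unramified; fix a base point $w_0$ away from the critical values and a preimage $z_0\in\tilde{\Delta}$. Any path $\gamma\colon[0,1]\to\Delta$ from $w_0$ avoiding critical values lifts through $z_0$, and the lift $\tilde{\gamma}$ either converges to a point of $\tilde{\Delta}$ or has $\tilde{\gamma}(t)\to\infty$ as $t\to 1$; in the latter case $f(\tilde{\gamma}(t))\to\gamma(1)\in\Delta$ supplies an asymptotic value of $f$ in $\Delta$, contradicting our assumption. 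Hence every path lifts fully and the covering is finite-sheeted, so combined with finitely many critical points $f$ is a proper branched covering, contradicting $d=\infty$. So one of the two alternatives must hold.

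Finally, under the singleton hypothesis $\Delta\cap S(f)=\{s\}$, the covering $f\colon\tilde{\Delta}\setminus f^{-1}(s)\to\Delta\setminus\{s\}$ is over a base with fundamental group $\Z$; the source being connected, the cover corresponds either to the trivial subgroup or to $n\Z$ for some $n\geq 1$. In the trivial case $\tilde{\Delta}\setminus f^{-1}(s)$ is simply connected, forcing $f^{-1}(s)\cap\tilde{\Delta}=\emptyset$ (removing a point from a plane domain would introduce a nontrivial loop). In the $n\Z$ case $\tilde{\Delta}\setminus f^{-1}(s)$ is topologically an annulus; since removing $k$ points from a plane domain of fundamental-group rank $r$ yields rank $r+k$, either $\tilde{\Delta}$ is simply connected with a unique preimage of $s$ (a critical point of local degree $n$ when $n>1$), or $\tilde{\Delta}$ is already an annulus with no preimages of $s$. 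Either way, $\tilde{\Delta}$ contains at most one critical point, completing the proof.
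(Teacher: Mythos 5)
The paper does not prove this proposition; it is simply quoted as ``combines parts of Propositions 2.8 and 2.9 of~\cite{BFR}'', so there is no proof in the paper to compare against. Your proposal attempts to supply one, and while the overall architecture (properness by compactness, covering structure over the non-singular locus, the dichotomy, the singleton case) is the right kind of plan, several of the key steps have genuine gaps.

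The most serious gap is in the dichotomy argument. You conclude ``every path lifts fully and the covering is finite-sheeted'', but this is a non-sequitur: the base $\Delta\setminus\{f(c_1),\dots,f(c_m)\}$ has nontrivial (free) fundamental group once $m\geq1$, so a genuine covering of it can perfectly well be infinite-sheeted while still having the full path-lifting property. (The universal cover of a once-punctured disc is one example.) What needs to be shown is that having only finitely many critical points in $\tilde\Delta$ and no asymptotic curve forces the cover to be finite; this requires a further argument, say via the Euler-characteristic relation $\chi(\tilde\Delta')=d\cdot\chi(\Delta')$ after establishing that $f^{-1}(\{f(c_i)\})\cap\tilde\Delta$ is finite, or by analysing the components over small discs around each critical value and showing that unboundedness of a component produces a logarithmic singularity, hence an asymptotic curve in $\tilde\Delta$. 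Note also that the restriction you actually want is $f\colon\tilde\Delta\setminus f^{-1}(\{f(c_1),\dots,f(c_m)\})\to\Delta\setminus\{f(c_1),\dots,f(c_m)\}$, not $\tilde\Delta\setminus\{c_1,\dots,c_m\}$; deleting only the critical points does not make the map unramified onto the punctured target.

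Two further problems. First, in the ``at most one exception'' clause, you assert that exceptional values ``occur only where all sheets coalesce under ramification, which a local count forces for at most one value of $w$'', but no local count gives this: a value could fail to be fully covered because the missing preimages have escaped to infinity, not because they coalesced, and bounding the number of such exceptional values by one is a genuinely nontrivial classical fact (Heins, 1957) that needs a real proof or a citation. Second, the constancy-of-$d$ argument tacitly assumes $\Delta\setminus S(f)$ is connected; since $S(f)$ is merely closed, it may disconnect $\Delta$, so the cardinality need not be a single constant there. The final singleton case is essentially correct, and could even be streamlined by first observing that $\tilde\Delta$ is automatically simply connected whenever $\Delta$ is (a standard consequence of the maximum principle / connectedness of $\Ch\setminus\Delta$), which rules out the annulus alternative outright.
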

We also use the following, which strengthens other parts of \cite[Proposition 2.9]{BFR}.
\begin{prop}
\label{prop:bfrnew}
Suppose that $f$, $\Delta$, and $\tilde{\Delta}$ are as in the statement of Proposition~\ref{prop:bfr}, and that case \ref{case:fin} of that proposition holds. Suppose also that $\Delta$ is a bounded Jordan domain. Then either $\partial\Delta$ contains an asymptotic value, or $\tilde{\Delta}$ is also a bounded Jordan domain.
\end{prop}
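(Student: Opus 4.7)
The plan is to prove the contrapositive: assume $\partial\Delta$ contains no asymptotic value of $f$, and deduce that $\tilde{\Delta}$ is a bounded Jordan domain. The argument naturally splits into (i) boundedness and (ii) the Jordan structure of $\partial\tilde{\Delta}$.

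For (i), I would suppose $\tilde{\Delta}$ is unbounded and produce a forbidden asymptotic value. Since $f\colon\tilde{\Delta}\to\Delta$ is proper of finite degree $d$ and $\Delta$ is bounded, any sequence $z_n\in\tilde{\Delta}$ with $|z_n|\to\infty$ must have $f(z_n)$ escape every compact subset of $\Delta$; after a subsequence, $f(z_n)\to w^*\in\partial\Delta$. I will construct a simple arc $\eta\colon[0,1)\to\Delta$ passing through infinitely many of the points $f(z_n)$, tending to $w^*$ at $t\to 1$, and avoiding the finitely many critical values of $f$ in $\Delta$ (except possibly its endpoint). Since $f$ is a proper branched covering of degree $d$, $\eta$ admits exactly $d$ continuous lifts in $\tilde{\Delta}$. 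The infinitely many $z_n$ are distributed among these $d$ lifts, so by pigeonhole some lift contains infinitely many $z_n$ and is therefore unbounded. This unbounded lift is an asymptotic path for $f$ with value $w^*\in\partial\Delta$, contradicting our hypothesis.

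For (ii), once $\tilde{\Delta}$ is bounded, the proper map extends continuously to $\overline f\colon\overline{\tilde{\Delta}}\to\overline\Delta$ with $\overline f(\partial\tilde{\Delta})\subset\partial\Delta$. I would finish via conformal uniformization: let $\phi\colon\D\to\Delta$ be a Riemann map, extending to a homeomorphism $\overline\D\to\overline\Delta$ by Carath\'eodory's theorem since $\Delta$ is Jordan. Granted that $\tilde{\Delta}$ is simply connected, pick a Riemann map $\psi\colon\D\to\tilde{\Delta}$; then $B\defeq\phi^{-1}\circ f\circ\psi\colon\D\to\D$ is a finite Blaschke product of degree $d$, so it extends continuously to $\overline\D$. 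Combined with continuity of $f$, this forces each cluster value of $\psi$ at $\zeta\in\partial\D$ to lie in the finite set $f^{-1}(\phi(B(\zeta)))\cap\partial\tilde{\Delta}$; connectedness of cluster sets then gives a continuous extension of $\psi$ to $\overline\D$, and a preimage count using the degree-$d$ boundary map $B|_{\partial\D}$ shows this extension is injective on $\partial\D$, so $\partial\tilde{\Delta}$ is a Jordan curve.

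The hard part is thus establishing that $\tilde{\Delta}$ is simply connected. My plan here is by contradiction: suppose $\C\setminus\tilde{\Delta}$ has a bounded component $E$. A winding-number calculation along a Jordan curve $\gamma\subset\tilde{\Delta}$ enclosing $E$ shows that the winding of $f\circ\gamma$ about every $w\in\Delta$ is strictly positive (zero winding would force $f(E^\circ)\subset\partial\Delta$, violating the open mapping theorem on $E^\circ$). By the argument principle, $f(E^\circ)\supset\Delta$ and $f(E)\subset\overline\Delta$, so $E^\circ\neq\emptyset$ and is covered by further components $\tilde{\Delta}_i'$ of $f^{-1}(\Delta)$, each again lying in case~\ref{case:fin} and, by recursion, itself a bounded Jordan domain inside $E$. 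The contradiction should then be extracted by combining Riemann--Hurwitz on $\tilde{\Delta}$ with the recursively-obtained structure on the inner components in $E$, using that $f$ is entire---hence analytic throughout $E$, without poles---to rule out the rational-function phenomenon (exemplified by the Joukowski map $z+1/z$, where the pole at the origin permits a genuine annular preimage of a disc). This bookkeeping is the technical heart of the argument.
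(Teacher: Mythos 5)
Your approach is genuinely different from the paper's, and unfortunately the parts you yourself flag as ``the hard part'' contain the real gaps. The simple-connectivity argument is the biggest issue. First, it is much easier than your Riemann--Hurwitz bookkeeping suggests: if $\gamma\subset\tilde{\Delta}$ is a Jordan curve with bounded complementary component $W$, the argument principle for the \emph{entire} (pole-free) function $f$, combined with contractibility of $f(\gamma)$ in the simply connected domain $\Delta$, gives $f(\overline W)\subset\overline\Delta$ and hence $f(W)\subset\Delta$; since $W\cup\gamma$ is connected and meets $\tilde{\Delta}$, it is contained in the component $\tilde{\Delta}$ of $f^{-1}(\Delta)$, so $W\subset\tilde{\Delta}$. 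No recursion or Euler-characteristic bookkeeping is needed. Second, your proposed route does not handle the case where a bounded component $E$ of $\C\setminus\tilde{\Delta}$ has empty interior (a slit or a single point): your ``zero winding forces $f(E^\circ)\subset\partial\Delta$'' step is vacuous there, so you cannot conclude positive winding, and the recursion never gets started. Your boundedness argument via pigeonhole on lifts also needs a further argument to conclude that the unbounded lift actually tends to $\infty$ (rather than merely containing an unbounded sequence); one way to finish is to observe that the cluster set of the lift at $t=1$ is a continuum containing $\infty$ whose finite part lies in the at most countable set $f^{-1}(w^*)$, hence is the singleton $\{\infty\}$, but as written this step is missing. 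Finally, your boundary-injectivity claim via a ``preimage count'' for the Blaschke product is not persuasive: a continuous but non-injective extension of $\psi$ (e.g.\ for a slit domain) is perfectly compatible with $B$ being a degree-$d$ Blaschke product, so the counting argument by itself does not rule out identifications on $\partial\D$.

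The paper's proof is both shorter and avoids these issues. It first observes directly that $\partial\tilde{\Delta}$ is locally an arc at every \emph{finite} point, which immediately handles the bounded case (together with the standard fact that a proper preimage of a simply connected domain under an entire function is simply connected). For the unbounded case it invokes the Whyburn result that a continuum cannot fail to be locally connected at exactly one point, so $\partial\tilde{\Delta}$ is locally connected in $\Ch$; then both Riemann maps extend continuously by Carath\'eodory--Torhorst, $\psi^{-1}\circ f\circ\phi$ is a finite Blaschke product, and the prime end $\zeta$ of $\tilde{\Delta}$ with $\phi(\zeta)=\infty$ directly yields the asymptotic value $\psi(B(\zeta))\in\partial\Delta$. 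In contrast to your approach, this avoids constructing an arc through the $f(z_n)$ and the pigeonhole on lifts, and it sidesteps boundary injectivity entirely in the bounded case by relying on the locally-an-arc observation. If you want to salvage your plan, replacing the recursive Riemann--Hurwitz sketch with the short argument-principle proof of simple connectivity and inserting the cluster-set step in part (i) would close the main gaps, but you would still do well to use the locally-an-arc fact rather than a Blaschke-product preimage count to prove $\partial\tilde{\Delta}$ is a Jordan curve.
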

\begin{proof}
 It is easy to see that, at any finite point, 
   $\partial \tilde{\Delta}$ is locally an arc. In particular, if $\tilde{\Delta}$ is bounded, then it is a Jordan domain. 

 Now suppose that $\tilde{\Delta}$ is unbounded. Then the boundary of $\tilde{\Delta}$ in $\Ch$ is locally connected.
   Indeed, this is true at every finite point by the above, and 
   a continuum cannot fail to be locally connected at only a single point; see~\cite[(12.3) in Chapter~I]{Whyburn}. 

  Let $\phi$ be a Riemann map from $\D$ to $\tilde{\Delta}$ and let $\psi$ be a Riemann map from $\D$ to $\Delta$. 
   Then $\psi^{-1} \circ f \circ \phi$ is a finite Blaschke product, say $B$, which extends continuously $\partial \D$. By the Carath\'eodory-Torhorst theorem, $\phi$ and $\psi$, also extend continuously to $\partial\D$. 
  Since $\tilde{\Delta}$ is unbounded, there is $\zeta\in\partial \D$ with $\phi(\zeta)=\infty$. It follows that 
   $\psi(B(\zeta)) \in \partial \Delta$ is an asymptotic value of $f$, as claimed.
\end{proof}
A key result of this section is the following. 
\begin{theorem}[Immediate basins of geometrically finite maps]
\label{theo:1.10}
Suppose that $f$ is strongly geometrically finite, and that $U$ is a periodic Fatou component of $f$, of period $p \geq 1$. Then the following are equivalent:
\begin{enumerate}[(a)]
\item $U$ is a Jordan domain; \label{t1.10:a}
\item $\Ch\setminus U$ is locally connected at some finite point of $\partial U$; \label{t1.10:b}
\item $U$ is bounded; \label{t1.10:c}
\item infinity is not accessible from $U$; \label{t1.10:d}
\item the orbit of $U$ contains no asymptotic curves and only finitely many critical points;\label{t1.10:e}
\item $f^p : U \to U$ is a proper map; \label{t1.10:f}
\item for at least two distinct choices of $z \in U$, the set $f^{-p}(z) \cap U$ is finite.\label{t1.10:g}
\end{enumerate}
\end{theorem}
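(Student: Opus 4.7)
The plan is to establish the seven-way equivalence by a cycle of implications, using Propositions~\ref{prop:bfr} and~\ref{prop:bfrnew} as the principal tools along with the expansion estimate of Proposition~\ref{prop:nicederivatives}. Writing the cycle as $U = U_0 \to U_1 \to \cdots \to U_{p-1} \to U_0$ with $U_i \defeq f^i(U)$, the trivial implications are (a) $\Rightarrow$ (b), (c) $\Rightarrow$ (d), (c) $\Rightarrow$ (f) (preimages of compact subsets of $U$ under $f^p$ cannot accumulate on $\partial U \subset J(f)$ when $U$ is bounded, hence are relatively compact in $U$), and (f) $\Rightarrow$ (g).

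The analytic equivalences (e) $\Leftrightarrow$ (f) $\Leftrightarrow$ (g) follow from Proposition~\ref{prop:bfr} applied to each step $f \colon U_i \to U_{i+1 \bmod p}$ of the cycle: $f^p$ is proper on $U$ iff each step is, and by Proposition~\ref{prop:bfr} the latter is equivalent to no $U_i$ containing an asymptotic curve or infinitely many critical points. For (g) $\Rightarrow$ (f), case (b) of Proposition~\ref{prop:bfr} would force all but at most one point of $U$ to have infinite $f^p$-preimage in $U$, contradicting (g). For (d) $\Rightarrow$ (f), strong geometric finiteness gives $P(f)$ bounded (Proposition~\ref{prop:Fatou}), so the compactness hypothesis of Proposition~\ref{prop:bfr}'s case (b) applies and case (b) would force $\infty$ to be accessible from $U$, contradicting (d). For (b) $\Rightarrow$ (d), local connectivity of $\Ch \setminus U$ at a single finite point of $\partial U$, propagated along $\partial U$ by inverse branches of $f^p$ together with the expansion of Proposition~\ref{prop:nicederivatives}, is incompatible with $\infty$ being an accessible boundary point of $U$.

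The main remaining task is to prove (f) $\Rightarrow$ (a), which simultaneously gives (c). Here I would use Proposition~\ref{prop:attracting} (attracting case) or Proposition~\ref{prop:parabolic} (parabolic case) to obtain a bounded Jordan absorbing subdomain $V \Subset U$ with $f^p(\overline V) \subset V$ (or its parabolic analogue), chosen so that $\partial V$ is disjoint from the singular set. Define $V_n$ as the component of $(f^p)^{-n}(V)$ in $U$ containing $V$; then $\{V_n\}$ is a nested exhaustion of $U$, and $f^p \colon V_n \to V_{n-1}$ is proper (by (f)) of finite degree. Since $f$ is strongly geometrically finite, $\partial U_i \subset J(f)$ is disjoint from the asymptotic values of $f$, so Proposition~\ref{prop:bfrnew} applied iteratively shows each $V_n$ is a bounded Jordan domain, and propagation of this property around the rest of the cycle $U_1, \ldots, U_{p-1}$ again follows from Proposition~\ref{prop:bfrnew}.

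The main obstacle will be to show that this exhaustion actually produces $U$ as a bounded Jordan domain: one needs uniform control on the Euclidean diameters of the $V_n$ and on the local connectivity of their boundaries, which is where the transcendental nature of $f$ competes hardest with the properness hypothesis. The key tool is Proposition~\ref{prop:nicederivatives}: in the metric $\sigma$ of Section~\ref{S.metric}, the pull backs $(f^{p})^{-n}(\partial V)$ have $\sigma$-length decaying summably in $n$, and converting this to Euclidean length (with special care near parabolic points, where $\sigma$ coincides with $\omega = \dpar^{-s_\sigma}$ in thin repelling sectors) gives the required uniform Euclidean control on the $V_n$. The parabolic case is the most delicate part of this analysis, mirroring the argument of Sections~\ref{S.metric} and~\ref{S.expanding}.
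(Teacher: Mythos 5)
Your proposal follows the paper's broad strategy: trivial implications, Proposition~\ref{prop:bfr} for the analytic equivalences, and an exhaustion argument driven by Proposition~\ref{prop:nicederivatives} for the hard implication (f)$\Rightarrow$(a). But there are two genuine gaps.

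The first is in your main step. The claim that ``the pull backs $(f^{p})^{-n}(\partial V)$ have $\sigma$-length decaying summably in $n$'' is false: the component of $(f^p)^{-n}(\partial V)$ inside $U$ is the Jordan curve $\partial V_n$, and these curves accumulate on $\partial U$, so their $\sigma$-lengths certainly do not decay (typically they grow without bound). The quantity that \emph{does} decay, and which the paper actually controls, is the length of curves joining $\partial V_n$ to $\partial V_{n+1}$. The paper constructs an explicit homotopy $\theta\colon S^1\times[0,\infty)\to\overline U$ between successive boundaries, anchored so that $\theta(1,t)\equiv\zeta$ at the parabolic point, with the initial tracks $\gamma_0(x)$ shrinking like $\dpar(z_0(x))^{p+1}$ near $\zeta$; Proposition~\ref{prop:nicederivatives} together with Remark~\ref{rmk:expansionell} then gives $\ell_\sigma(\gamma_n(x))=O(n^{-\tau})$, so the boundary parametrizations $\theta(\cdot,n)$ converge uniformly to a continuous surjection onto $\partial U$. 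This homotopy is the essential device, and nothing in your proposal plays the same role. Your observation that Proposition~\ref{prop:bfrnew} yields, iteratively, that each $V_n$ is a bounded Jordan domain is valid (and a slightly different route from the paper, which obtains $\partial V_n$ directly as piecewise analytic curves via the petal construction), but it says nothing about the limit $U=\bigcup_n V_n$ being a Jordan domain, which is where the work is.

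The second gap is your treatment of (b). The paper goes (b)$\Rightarrow$(c) directly via the cited result \cite[Theorem 2.6]{BFR}. You instead propose (b)$\Rightarrow$(d) by ``propagating local connectivity along $\partial U$ by inverse branches'' and deriving a contradiction with accessibility of $\infty$. This is too vague to assess and I do not see how to make it rigorous: local connectivity at a dense set of finite boundary points does not obviously preclude $\infty$ from being accessible. The paper's reliance on \cite[Theorem 2.6]{BFR} is the cleaner route. Finally, a small but necessary point you omit: the estimates of Proposition~\ref{prop:nicederivatives} are for the iterate $\f=f^{\n}$, not $f^p$, so the pullback argument in (f)$\Rightarrow$(a) has to be carried out for $\f$, not directly for $f^p$.
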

\begin{rmk}
Note that Theorem~\ref{theo:1.10} is similar to \cite[Theorem 1.10]{BFR}. That result has the stronger hypothesis that $f$ is hyperbolic, and the stronger conclusion that $U$ is, in fact, a quasidisc; in other words, the image of the unit disc under a quasiconformal map of the sphere. This stronger result does not hold in general when $\partial U$ contains
a parabolic point (even if $U$ is an immediate attracting basin). 
\end{rmk}	
\begin{proof}[Proof of Theorem~\ref{theo:1.10}]
The first parts of the proof are as in the proof of \cite[Theorem 1.10]{BFR}, and are included for completeness. That $\ref{t1.10:a} \implies \ref{t1.10:b}$ is immediate. It follows from \cite[Theorem 2.6]{BFR} that $\ref{t1.10:b} \implies \ref{t1.10:c}$, and the fact that $\ref{t1.10:c} \implies \ref{t1.10:d}$ is immediate. Note that, by assumption, $S(f) \cap U$ is compact. Suppose that infinity is not accessible in $U$. It follows from Proposition~\ref{prop:bfr} that the orbit of $U$ contains only finitely many critical points and no asymptotic curves. In other words, $\ref{t1.10:d} \implies \ref{t1.10:e}$. The fact that $\ref{t1.10:e} \implies \ref{t1.10:f} \iff \ref{t1.10:g}$ is also a consequence of Proposition~\ref{prop:bfr}.

It remains to show that \ref{t1.10:f} implies \ref{t1.10:a}. 
As earlier in the paper, let $\f$ be the smallest iterate of $f$ such that all periodic Fatou components of $\f$ are fixed, and all parabolic points are of multiplier one. 

Suppose that $U$ is a periodic Fatou component of $f$. If $U$ is an immediate attracting basin and $\partial U \cap \Par(f) = \emptyset$, then $\f$ is uniformly expanding
  in a neighbourhood of $\partial U$, with respect to the metric $\sigma$. In this case, we can follow the proof of \cite[Theorem 1.10]{BFR}, and obtain even that $U$ is a quasidisc. 
  If $U$ is an immediate attracting basin whose boundary contains a parabolic point, then we may still follow the proof of \cite[Theorem 1.10]{BFR}, but replacing uniform
  expansion with Proposition~\ref{prop:nicederivatives}. We again obtain that $U$ is a Jordan domain (but no longer a quasidisc). 

 Suppose, then, that $U$ is an immediate parabolic basin if $\f$, with a parabolic fixed point $\zeta \in \partial U$. 
  Recall that, by assumption, $\f : U \to U$ is a proper map. Let $D_j'\subset U$
  be the attracting petal from Proposition~\ref{prop:parabolic} that is contained in 
  $U$. We may assume for simplicity that $\partial D_j'$ is piecewise analytic, 
  and analytic except possibly at $\zeta$. Define 
  $U_n \defeq \f^{-(n+1)}(D_j')$. Since $D_j'$ contains $P(\f)\cap U$, each
  $\partial U_n$ is again a piecewise analytic Jordan curve, mapped as a covering
  map to $\partial U_{n-1}$.  
  
  Now define a homotopy between $\partial U_0\setminus\{\zeta\}$ and 
  $\partial U_1\setminus\{\zeta\}$ in 
  $U\setminus \overline{V}$. More precisely, we choose a continuous function 
   \[ \theta \colon S^1 \times [0,1] \to \overline{U} \]
   such that:
   \begin{enumerate}
      \item $\theta(\cdot ,0)\colon S^1\to \partial U_0$ is a homeomorphism;
      \item $\theta(\cdot ,1)\colon S^1\to \partial U_1$ is a homeomorphism; 
      \item $\theta(1,t)=\zeta$ for all $t\in [0,1]$;
      \item $\theta(x,t) \in  U\setminus \overline{V}$ for $x\neq 1$ and $t<1$. 
   \end{enumerate}
   
  Consider the curve $\gamma_0(x) \defeq \theta(x,[0,1])$, which connects
   $z_0(x) \defeq \theta(x,0)$ to $z_1(x)\defeq \theta(x,1)$. Clearly we may assume that
  $\theta$ is smooth, so that the Euclidean length of $\gamma_0(x)$ is uniformly bounded.

  Moreover, for $x$ sufficiently close to $1$, we may suppose that 
   $z_1(x)$ is the image of $z_0(x)$ under the branch of $\f^{-1}$ that fixes
   $\zeta$. We have 
       \[ \lvert z_1(x)-z_0(x)\rvert = O(\lvert z_0(x) - \zeta\rvert^{p+1}). \] 
   (Here $p+1$ is the multiplicity of $\zeta$, as before.) It is easy to see that
   we can let the length of $\gamma_0(x)$ to be of the same order. In summary,
   the Euclidean length satisfies 
   \[ \ell(\gamma_0(x)) = O( \min(1, \dpar(z_0(x))^{p+1})), \] 
    and the $\sigma$-length satisfies
    \begin{equation}\label{eqn:sigmalength} \ell_{\sigma}(\gamma_0(x)) = O(\min(1, \dpar(z_0(x))^{p+1-s_{\sigma}})).\end{equation}
    (Recall that $s=s_{\sigma} = 1 - \frac{1}{2n_{\sigma}}$ was defined 
    in~\eqref{eqn:sdef}.) 

   We now extend $\theta$ to a map on $S^1\times [0,\infty)$ as follows.
     For $x\in S^1$, let $x_1\in S^1$ be the unique number
     such that $\f(z_1(x)) = z_0(x_1)$.    Since $f\colon U\setminus \overline{U_0}\to U\setminus \overline{D_j'}$ is
   a covering map, there is a unique continuous extension of $\theta$ to
       $S^1\times [0,2]$ such that
       \[ \f(\theta(x,t+1)) = \theta(x_1,t) \]
       for all $x\in S^1$.
       
  Continuing inductively, we obtain the desired extension 
      \[ \theta \colon S^1 \times [0,\infty) \to \overline{U}. \]
  For $x\in S^1$ and $n\geq 0$, the curve $\gamma_n(x) \defeq \theta(x,[n,n+1])$ 
  connects $\partial U_n$ and $\partial U_{n+1}$, and is a pullback of
  the curve $\gamma_0(x_n)$ for some $x_n\in S^1$. 
  
  By Proposition~\ref{prop:nicederivatives}, Remark~\ref{rmk:expansionell} and~\eqref{eqn:sigmalength}, 
     it follows that 
    \[ \ell_{\sigma}(\gamma_n(x)) =
           O\left( \frac{\ell_{\sigma}(\gamma_0(x_n))}{\min\left(\dpar(z_0(x))^{p+1-s_{\sigma}}\right)}\cdot k^{-\tau}\right) = 
             O(k^{-\tau}). \]
             
    Since $\tau>1$, these lengths are summable, and it follows that 
    the functions $\theta_t = \theta(\cdot,t)\colon S^1\to \overline{U}$ converge
    uniformly in the metric $\sigma$ to a continuous
    function $\phi\colon S^1\to \partial U$. Since 
    $\partial U_k$ converges to $\partial U$ in the Hausdorff metric on the sphere,
    $\phi$ is surjective. 
 In particular, $U$ is bounded and $\partial U$ is locally connected. By the maximum 
   principle and Montel's theorem, 
  $\overline{U}$ is full with $\partial \overline{U} = \partial U$, and it follows that 
   $\partial U$ is indeed a Jordan curve.
\end{proof}

We now use Theorem~\ref{theo:1.10} to prove Theorem~\ref{theo:boundedFatou}.
\begin{proof}[Proof of Theorem~\ref{theo:boundedFatou}]
Suppose that $f$ is strongly geometrically finite. If $f$ has an asymptotic value, then, by the definition of a strongly geometrically finite map, this lies in the Fatou set, and so $f$ has an unbounded Fatou component. If a Fatou component contains infinitely many critical points, then these must accumulate at infinity and so the component must be unbounded. This completes the proof in one direction.

To prove the other direction, suppose that $f$ has no asymptotic values and that each component of $F(f)$ contains at most finitely many critical points. It follows from Theorem~\ref{theo:1.10} that every periodic Fatou component of $f$ is a bounded Jordan domain. 

Suppose that $U$ is a Fatou component of $f$, and let $V$ be the Fatou component of $f$ containing $f(U)$. We claim that if $V$ is a bounded Jordan domain, then so is $U$. Since, by Proposition~\ref{prop:Fatou}, $f$ has no wandering domains, it follows from this claim that all Fatou components of $f$ are bounded Jordan domains, completing the proof.


To prove the claim, note that, by assumption, case \ref{case:fin} of Proposition~\ref{prop:bfr} applies, with $\tilde{\Delta} = U$ and $\Delta = V$. Since $f$ has no asymptotic values, it follows that $U$ is a bounded Jordan domain by Proposition~\ref{prop:bfrnew}.
\end{proof}

We can now deduce Theorem~\ref{theo:locallyconnected}. 
The proof
 is a generalisation of \cite[Theorem 2.5]{BFR} to strongly geometrically finite maps; 
see also \cite[Theorem 2]{Morosawa}.
\begin{proof}[Proof of Theorem~\ref{theo:locallyconnected}]
Suppose that $f$ satisfies the hypotheses of the theorem; that is,
 $f$ is strongly geometrically finite with no asymptotic values, and there is 
 a uniform bound on the number of critical points, counting multiplicity, in each
 Fatou component. In particular, $f$ satisfies the hypotheses of
 Theorem~\ref{theo:boundedFatou}, and hence every Fatou component of
 $f$ is a bounded Jordan domain. 
 
It follows by \cite[Thm. 4.4, Chapter VI]{Whyburn}, and see also \cite[comments after Lemma 2.3]{BFR}, that a compact subset of the sphere is locally connected if and only if the following conditions both hold:
\begin{enumerate}[(a)]
\item the boundary of each complementary component is locally connected; \label{con:a}
\item for each $r > 0$ there are only finitely many complementary components of spherical diameter greater than $r$. \label{con:b}
\end{enumerate}
By the above, the Julia set $J(f)$ satisfies~\ref{con:a}. So it remains to show that \ref{con:b} also holds. In other words, we have to show that for each $r > 0$ there are only finitely many Fatou components of spherical diameter greater than $r$. By passing
to an iterate, we may assume that all periodic Fatou components are invariant; in particular, all parabolic points of $f$ are fixed and of multiplier one. 

Since $f$ has only finitely many attracting or parabolic basins, it is enough to 
  establish~\ref{con:b} for the connected components of each such basin separately.
  So suppose that $V$ is an immediate attracting or parabolic basin. 
  For each $z\in\partial V$, we choose a set $U_z$ as follows.
  \begin{enumerate}
    \item If $z\notin \Par(f)$, then we choose 
             $U_z\subset S$ to be a round disc around $z$ whose closure does not contain
             any postsingular points of $f$, except possibly $z$ itself.
    \item If $z\in \Par(f)$, we choose a union $\tilde{U}_z\subset S$
       of thin repelling petals
       at $z$, for each repelling direction at $z$, and set 
        $U_z \defeq \tilde{U}_z \cup \{z\}$. We may choose
       these sectors small enough that $\overline{U_z}$ contains no postsingular points other than $z$. 
  \end{enumerate}
In each case, $U_z\cap J(f)$ is a relative neighbourhood of $z$ in $J(f)$. Let $\delta>0$ be such that every point in $U_{z}$ can be connected to $z$ by a curve of $\sigma$-length at 
 most $\delta$. 
Consider the collection $\mathcal{U}_n(z)$ of connected components of $f^{-n}(U_{z})$. Each $U\in \mathcal{U}_n(z)$ contains exactly one
   element of $f^{-n}(z)$. Let $r(U)$ be the smallest number such that every point of $U$ can be connected to this element by a curve 
   of $\sigma$-length at most $r(U)$; we call $r(U)$ the \emph{radius} of $U$. Since $f$ does not expand the metric 
  $\sigma$, we have $r(U)\leq \delta$, and moreover $r(U) \leq r(f(U))$ when $n\geq 1$.

\begin{claim}
  The radius of $U\in \mathcal{U}_n(z)$ tends to zero uniformly as $n\to\infty$. 
\end{claim}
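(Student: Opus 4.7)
The plan is to prove uniform shrinking of $r(U_n)$ by lifting short curves in $U_z$ through $f^n$ and estimating their $\sigma$-length using Proposition~\ref{prop:nicederivatives}. Namely, for $w \in U_n$, I choose $\gamma \subset U_z$ from $f^n(w)$ to $z$ with $\ell_\sigma(\gamma) \leq \delta$, and lift $\gamma$ through the branch of $f^{-n}$ mapping $z$ to $w_z$ to obtain $\tilde\gamma \subset U_n$ from $w$ to $w_z$. Then
\[
\ell_\sigma(\tilde\gamma) \;=\; \int_\gamma \|\Deriv f^n(\tilde\gamma(t))\|_\sigma^{-1}\, d\sigma(\gamma(t)),
\]
and the supremum over $w \in U_n$ bounds $r(U_n)$.

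The argument splits according to whether $z \in \Par(f)$. In the non-parabolic case I choose $U_z$ as a small round disc with $\overline{U_z} \subset S \setminus \Par(f)$, so that both $\sigma$ and $\dpar^{-1}$ are bounded on $\overline{U_z}$. Proposition~\ref{prop:nicederivatives} then yields $\|\Deriv f^n\|_\sigma \geq C n^\tau$ uniformly along every lift, and hence $\ell_\sigma(\tilde\gamma) = O(n^{-\tau}) \to 0$ uniformly in $U_n \in \mathcal{U}_n(z)$.

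In the parabolic case $z \in \Par(f)$, the fixed point $z$ lies on the boundary of $U_z$, which is a union of thin repelling sectors at $z$, and two subcases arise. If $U_n$ contains $z$, then $w_z = z$; by Proposition~\ref{prop:nearparabolic} the Euclidean diameter of $U_n$ is $O(n^{-1/p})$ where $p+1$ is the multiplicity of $z$, and since $\sigma = \dpar^{-s_\sigma}$ is integrable near $z$ (because $s_\sigma < 1$), a radial curve from $w \in U_n$ to $z$ has $\sigma$-length at most $|w - z|^{1-s_\sigma}/(1-s_\sigma) = O(n^{-(1-s_\sigma)/p})$, giving the required uniform bound. Otherwise $w_z \neq z$: the test curve $\gamma$ still terminates at the singular point $z$ of $\sigma$, but the singularity is cancelled by the corresponding blow-up of $\|\Deriv f^n\|_\sigma$ at $w_z$. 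Making this cancellation explicit via bounded distortion for $f^n|_{U_n}$ (whose degree is uniformly bounded by the hypothesis of Theorem~\ref{theo:locallyconnected} together with bounded criticality on $J(f)$), and combining with the refined exponent from Remark~\ref{rmk:expansionell}, the integrand $\|\Deriv f^n\|_\sigma^{-1}\, d\sigma$ remains uniformly integrable and its integral tends to zero uniformly in $U_n$.

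The main obstacle is the parabolic subcase in which $U_n$ does not contain $z$: there the integrand in the length formula has a singularity at the endpoint $z$, and controlling its integral requires balancing this singularity against the growth of the $\sigma$-derivative at $w_z$, which is accomplished through bounded distortion of $f^n$ on the preimage component.
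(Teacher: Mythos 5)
The proposal is fine in the non-parabolic case and in the parabolic subcase where $U_n$ contains $z$ (there your explicit computation via Proposition~\ref{prop:nearparabolic} and integration of $\omega$ is actually more quantitative than the paper's soft compactness argument, which just observes $\bigcap U_n=\{z\}$). But there is a genuine gap in the remaining subcase, $z\in\Par(f)$ and $z\notin U_n$, and you correctly flag it as the hard part without actually closing it. Here the bound from Proposition~\ref{prop:nicederivatives} (even in the refined form of Remark~\ref{rmk:expansionell}) gives $\|\Deriv f^n\|_\sigma^{-1}\lesssim \dpar(\gamma(t))^{-(p+1-s_\sigma)}n^{-\tau}$, so the integrand $\|\Deriv f^n\|_\sigma^{-1}\,d\sigma$ behaves like $\dpar^{-(p+1)}n^{-\tau}\,|d\gamma|$ near the endpoint $z$, which is \emph{not} integrable. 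You propose to rescue this via ``bounded distortion of $f^n$ on $U_n$,'' but no Koebe-type distortion estimate is available here: $z$ is a postsingular point, so no definite modulus annulus around the curve can be kept free of $P(f)$, and the distortion of $f^n$ near a high-order preimage of $z$ is genuinely unbounded in $n$. Note also that the analogous cancellation in the proof of Theorem~\ref{theo:1.10} only works because the curves $\gamma_0(x)$ there are \emph{constructed} to have $\sigma$-length $O(\dpar^{p+1-s_\sigma})$ (equation~\eqref{eqn:sigmalength}), exactly matching the derivative deficit; a generic curve from a point of $U_z$ to $z$ only has $\sigma$-length $O(\dpar^{1-s_\sigma})$, which is far too large.

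The paper sidesteps the singularity entirely with a combinatorial decomposition rather than a single integral. For $U\in\mathcal U_n(z)$ it takes $k\le n$ minimal with $f^k(U)\subset U_z$. On one hand $f^k(U)$ equals the nested piece $U_{n-k}$, so $r(U)\le r(U_{n-k})=\beta_{n-k}$ with $\beta_m\to0$ (compactness). On the other hand, if $k\ge1$ then $f^{k-1}(U)$ sits inside a component $V\in\mathcal U_1(z)$ with $z\notin V$, on which $\dpar$ is bounded below, so Proposition~\ref{prop:nicederivatives} gives $r(U)\le\alpha_{k-1}$ with $\alpha_m\to0$. Combining, $r(U)\le\max_{k\le n}\min(\alpha_{k-1},\beta_{n-k})\to0$ uniformly. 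This ``split at the first entry into $U_z$'' argument is the missing idea: it replaces the problematic integral near $z$ by the soft bound $\beta_{n-k}$, and uses the uniform expansion only on the part of the orbit that stays away from $\Par(f)$. Your proposal would need to be restructured along these lines to go through.
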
 
\begin{subproof}
    If $z\notin \Par(f)$, then $\dpar(w)$ is bounded from below for $w\in U_z$ by choice of $U_z$. It follows from Proposition~\ref{prop:nicederivatives} that 
      \[ r(U) = O( \delta \cdot n^{-\tau}  ) \]
      for $U\in \mathcal{U}_n(z)$, showing that the radius tends to zero as desired.

   On the other hand, suppose that $z\in \Par(f)$, and let $U\in \mathcal{U}_1(z)$. If $z\notin U$, then there is $\eps>0$, independent of $U$, such that
     all points $w\in U$ have $\dpar(w)\geq \eps$. As above, by Proposition~\ref{prop:nicederivatives}, there is a sequence 
       $\alpha_n>0$ with $\alpha_n\to 0$ such that every connected component of
      $f^{-n}(U)$ has radius at most $\alpha_n$, for all $n\geq 0$ and all $U\in\mathcal{U}_1(z)$ with $z\notin U$. 

   On the other hand, for each $n\geq 0$, there is exactly one $U_n\in \mathcal{U}_n(z)$ with $z\in U_n$, and we have $U_{n+1}\subset U_n$. Since $U_z$ consists of
     repelling petals, together with $z$, we have $\bigcap U_n = \{z\}$. Therefore $\beta_n \defeq r(U_n)\to 0$ as $n\to\infty$. 

   Now let $U\in \mathcal{U}_n(z)$, and let $k\leq n$ be minimal such that $f^k(U)\subset U_z$. Then, on the one hand,
     \[ r(U) \leq r(f^k(U)) = r(U_{n-k}) = \beta_{n-k}. \]
   On the other hand, if $k\neq 0$, then $f^{k-1}(U)$ is contained in an element of $U_1(z)$ that does not contain $z$, and therefore
    \[ r(U) \leq \alpha_{k-1}. \]
    Therefore 
     \[ r(U) \leq \gamma_n \defeq \max_{k\leq n} \min(\alpha_{k-1},\beta_{n-k}). \]
     Clearly $\gamma_n\to 0$ as $n\to\infty$, as desired. 
\end{subproof}

\begin{claim}
  Let $\mathcal{V}_n$ denote the connected components of $f^{-n}(V)$, other than $V$ itself. Then the $\sigma$-diameter of $V_n\in \mathcal{V}_n$ tends to
   zero uniformly as $n\to\infty$. 
\end{claim}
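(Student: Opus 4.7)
The plan is to reduce this claim to the previous one via a finite covering of $\overline{V}$. As a first step, I extend the family $\{U_z\}_{z \in \partial V}$ from the preceding claim to a finite open cover of $\overline{V}$: for each interior point $z \in V$, I take $U_z$ to be a small Euclidean disc around $z$ that is disjoint from $P(f) \cap J(f)$. Because $\overline{U_z}$ then lies bounded away from $\Par(f)$, the case ``$z \notin \Par(f)$'' of the previous claim's proof applies verbatim and shows that the $\sigma$-radius of every component of $f^{-n}(U_z)$ tends to zero uniformly. By compactness of $\overline{V}$, I extract a finite subcover $U_{z_1},\ldots, U_{z_N}$ such that, for some sequence $\gamma_n \to 0$ independent of $i$, every connected component of $f^{-n}(U_{z_i})$ has $\sigma$-radius at most $\gamma_n$.

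Next, I invoke Theorem~\ref{theo:boundedFatou} to conclude that each $V_n \in \mathcal{V}_n$ is a bounded Jordan domain, in particular simply connected. The family
\[
\{U \cap V_n : U \in \mathcal{U}_n(z_i),\ 1 \le i \le N,\ U \cap V_n \neq \emptyset\}
\]
is an open cover of $V_n$ whose elements have $\sigma$-radius at most $\gamma_n$. To bound the $\sigma$-diameter of $V_n$, I would fix any two points $w_1, w_2 \in V_n$ and join them by a path $\tilde\gamma$ in $V_n$ whose image $\gamma \defeq f^n(\tilde\gamma) \subset \overline{V}$ can be arranged, after homotopy in $V_n$ rel.\ endpoints, to project to a simple arc in $\overline{V}$ passing in succession through at most $N$ of the cover elements $U_{z_{i_1}}, \ldots, U_{z_{i_M}}$ (with $M \le N$). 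The corresponding lift then traverses at most $M$ consecutive preimage components in $V_n$, each of $\sigma$-radius $\le \gamma_n$, so that the triangle inequality gives
\[
d_\sigma(w_1, w_2) \le 2 N \gamma_n,
\]
which tends to $0$ uniformly in $V_n \in \mathcal{V}_n$ as required.

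The principal obstacle is the monodromy step: a priori, the branched covering $f^n \colon V_n \to V$ of degree $d_n$ (which in general grows with $n$) could force lifts of a short base path to jump between many sheets of the covering, thereby crossing many preimage components. The resolution is to exploit simple connectedness of both $V$ and $V_n$, together with the fact that the critical values of $f^n$ in $V$ form a finite set (by the bounded-critical-point hypothesis), to carry out a monodromy argument selecting the homotopy class whose lift terminates at $w_2$ while its base representative remains a simple arc through the minimal chain of cover elements. Once the chain length is controlled by $N$, the diameter estimate and hence local connectivity of $J(f)$ follow.
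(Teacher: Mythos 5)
The central gap is in the monodromy step, and it is not repairable by the argument you sketch. You treat the degree $d_n$ of $f^n\colon V_n\to V$ as potentially unbounded in $n$, and then claim that by choosing a simple base arc through at most $N$ cover elements and a suitable homotopy class, the lift visits at most $N$ preimage components. This is false: even with $V$ and $V_n$ simply connected, a lift of a simple arc passing through a fixed chain of $N$ sets can legitimately traverse order $d_n$ distinct preimage components (consider $z\mapsto z^d$ on a disc with a small chain of sets near the branch point; a path between points on different sheets must wind around the critical point, crossing $\Theta(d)$ preimage pieces). Simple connectedness of source and target does not prevent this, nor does a finite critical value set; it is an intrinsic feature of the deck structure. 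So the bound $d_\sigma(w_1,w_2)\le 2N\gamma_n$ does not follow.

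The missing idea — and the linchpin of the paper's proof — is that the degree of $f^n\colon V_n\to V$ is in fact \emph{uniformly bounded}, independently of $n$ and $V_n$. Factor $f^n|_{V_n}$ through the Fatou components $W_0=V_n, W_1=f(V_n),\dots,W_n=V$; after passing to the iterate that makes all periodic Fatou components invariant, the $W_j$ are pairwise distinct until the first time $V$ is reached, $f\colon W_j\to W_{j+1}$ has degree $>1$ only when $W_{j+1}$ meets $S(f)$, only finitely many (say $t$) components meet $S(f)$ since $S(f)\cap F(f)$ is compact, and Riemann--Hurwitz bounds each step's degree by $N+1$ under the hypothesis on critical points. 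Hence the total degree is at most $(N+1)^t$, and one then covers $\partial V_n$ by a bounded number of pulled-back pieces of $\sigma$-diameter at most $\delta_n$. Two further remarks: the paper covers only $\partial V\subset J(f)\subset S$ rather than $\overline{V}$, which sidesteps the fact that $\sigma$ and the expansion estimate of Proposition~\ref{prop:nicederivatives} are unavailable on the part of $V$ inside $D\cup D'$; and since $V_n$ is already known to be a bounded Jordan domain, controlling $\partial V_n$ controls $V_n$, so there is no need to handle interior points directly.
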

\begin{subproof}
Since $\partial V$ is compact and locally connected, we may choose finitely many points $z_1,\dots,z_M$ such that the connected components 
   of $U_{z_j}\cap \partial V$ containing $z_j$ cover $\partial V$. Write $\mathcal{U}_n\defeq \bigcap_{j=1}^M \mathcal{U}_n(z_j)$. By the claim, there is
  $\delta_n\to 0$ such that the $\sigma$-diameter of $U\in \mathcal{U}_n$ is at most $\delta_n$. 

  If $V_n\in \mathcal{V}_n$, then we can cover $\partial V_n$ by 
  $M\cdot L$ sets from $\mathcal{U}_n$, where $M$ is the degree of $f^n\colon V_n\to V$. By assumption, this degree is uniformly bounded. Indeed, Since 
   $S(f) \cap F(f)$ is compact, only a finite number, $t$ say, of the Fatou components of $f$ intersect $S(f)$. By the Riemann-Hurwitz formula, and by assumption, 
   the degree of $f$ on any Fatou component is bounded by $N+1$. Thus $M\leq (N+1)^t$, and therefore 
   $\diam_{\sigma}(V) \leq L \cdot (N+1)^t \cdot \delta_n \to 0$. 
\end{subproof}

 In particular, the spherical diameter of $V\in\mathcal{V}_n$ tends to zero uniformly as $n\to\infty$. On the other hand, 
   for fixed $n$ the different elements of $\mathcal{V}_n$ can accumulate only at $\infty$; see \cite[Lemma 2.1]{dreadlocks}. In other words, for a given $\eps>0$, only 
   finitely many $\mathcal{V}_n$ contain elements of spherical diameter at least $\eps$, and for each $n$ the number of such elements is finite. 
   The proof of the theorem is complete. 
\end{proof}
\section{Examples}
\label{S.examples}
In this final section we give some examples. We begin with the simplest example of a {\tef} that is (strongly) geometrically finite but not subhyperbolic.

\begin{example}
\label{ex:exp}
Let $f_1(z) = e^{z-1}$. Then $f_1$ is strongly geometrically finite, but not subhyperbolic.
\end{example}
\begin{proof}
We have $AV(f_1) = S(f_1) = \{ 0 \}$. Moreover, $1$ is a parabolic fixed point
of $f_1$. Since its immediate basin must contain a singular value~-- or by elementary
considerations~-- we have $f^n(0) \in \ParO(f)$. Hence $J(f_1) \cap P(f_1) = \{1\}$, and $f_1$ is strongly geometrically finite, but $F(f_1) \cap P(f_1)$ is not compact and so $f_1$ is not subhyperbolic. It is straightforward to show that $F(f)$ is connected. Hence, by Theorem~\ref{theo:connected}, $f$ is topologically conjugate on its Julia set to any map of the form $z \mapsto \lambda e^z$, for $\lambda \in (0, 1/e)$. In particular, as noted earlier, $J(f)$ is a Cantor bouquet.
\end{proof}
Our second example, mentioned in the introduction, shows that the Julia set of the sine function is locally connected.
\begin{example}
\label{ex:sine}
Let $f_2(z) = \sin z$. Then $f_2$ is strongly geometrically finite, and $J(f_2)$ is locally connected.
\end{example}
\begin{proof}
Note that $S(f_2) = \{ \pm 1 \}$. The origin is a parabolic fixed point of $f_2$, and it is easy to see that both points of $S(f_2)$ lie in the parabolic basin of this point. It follows that $J(f_2) \cap P(f_2) = \{0\}$, and $f_2$ is strongly geometrically finite. All points on the imaginary axis apart from the origin lie in $I(f_2)$, and so the whole imaginary axis lies in $J(f_2)$. Thus the two singular values of $f_2$ lie in different Fatou components. Hence the assumptions of Corollary~\ref{cor:niceone} are satisfied, and so $J(f_2)$ is locally connected.
\end{proof}

\begin{figure}
  \subfloat[$f_1(z)=e^{z-1}$]{\includegraphics[width=.45\textwidth]{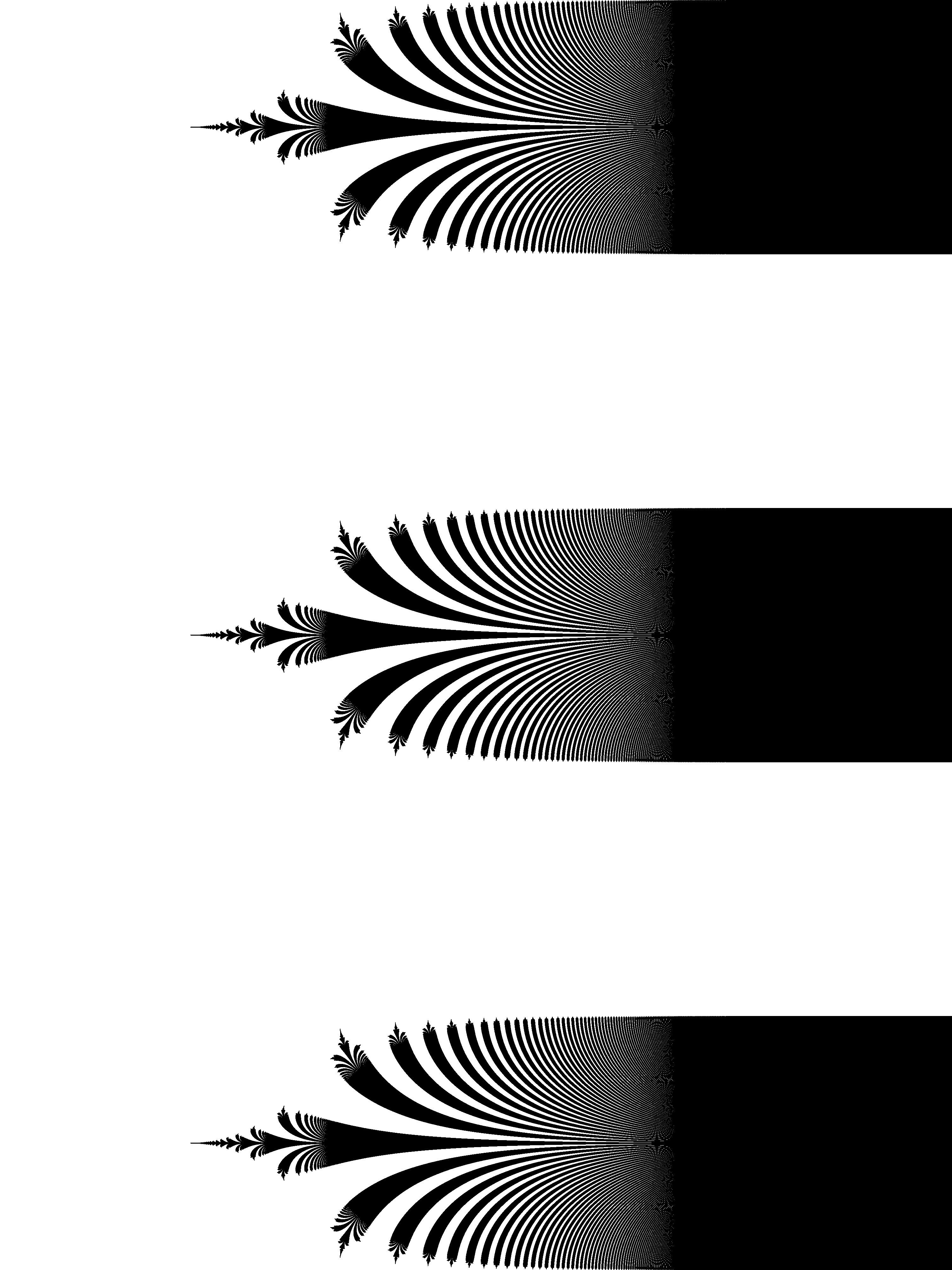}}\hfill
  \subfloat[$f_3(z)=ze^z$]{\includegraphics[width=.45\textwidth]{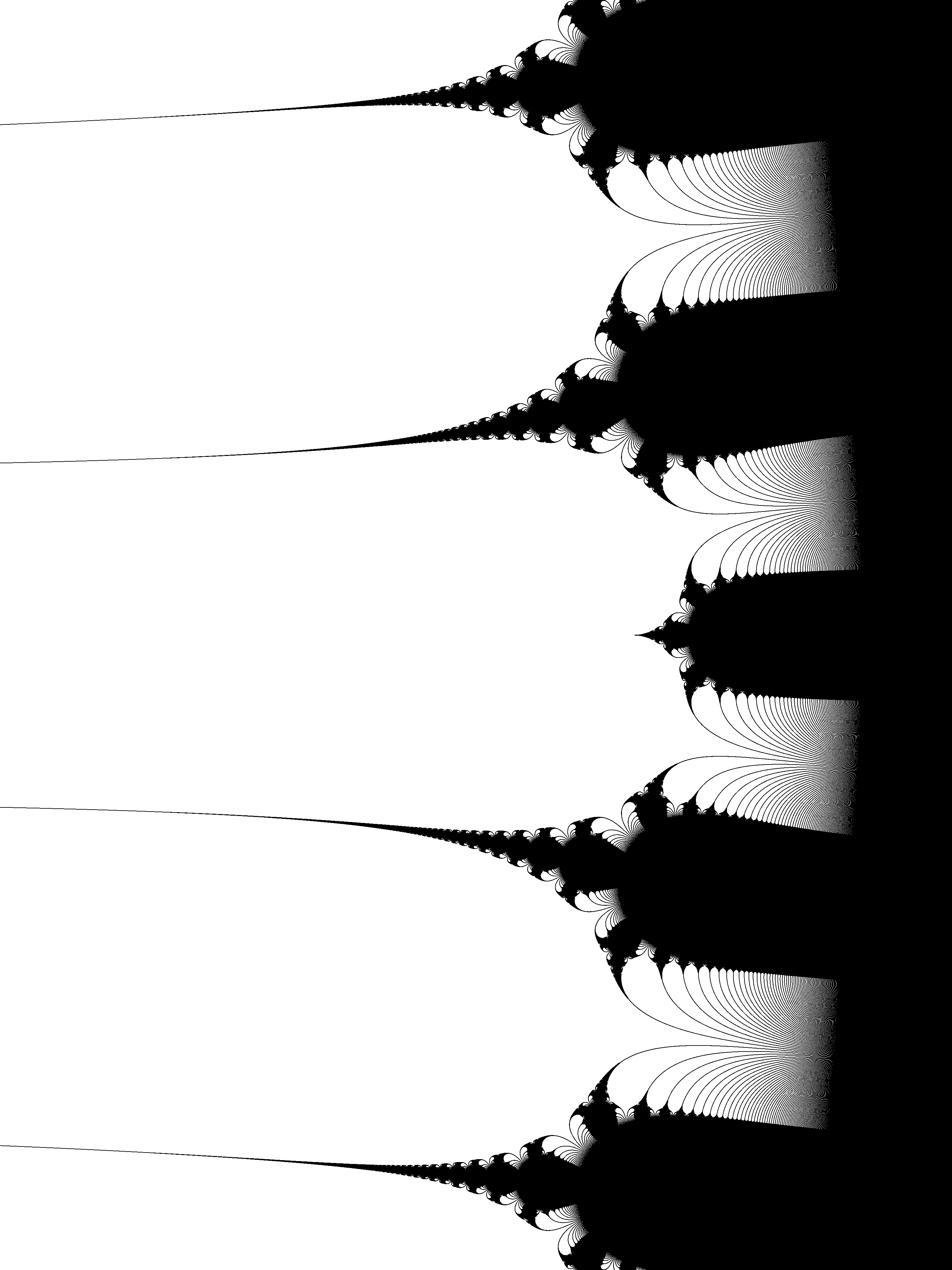}} \\
  \subfloat[$f_2(z)=\sin(z)$\label{fig:julia_f2}]{\includegraphics[width=.45\textwidth]{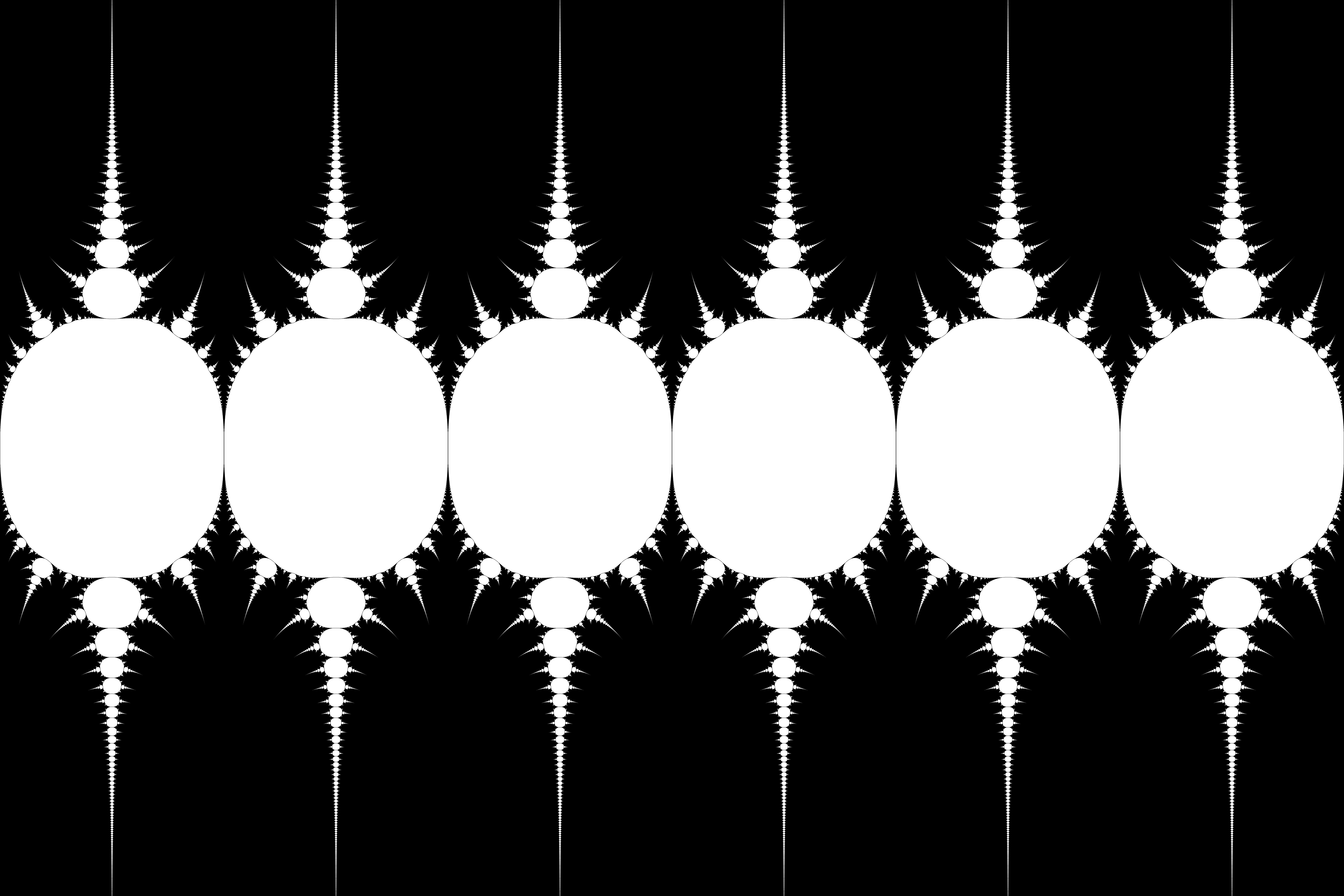}}\hfill
  \subfloat[$f_4(z)=(\sin(z+a) - \sin a)/\cos a$]{\includegraphics[width=.45\textwidth]{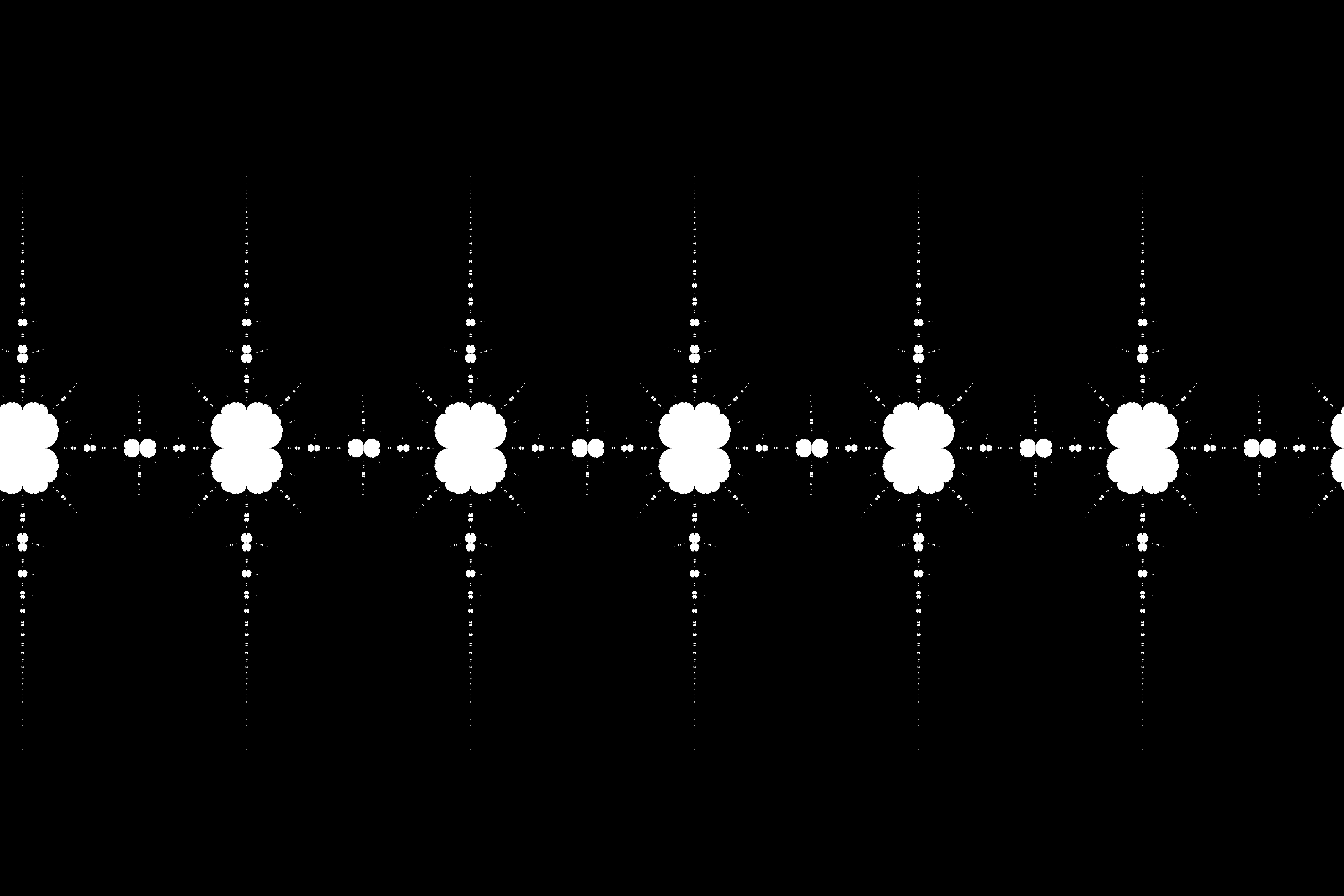}}\hfill
  \caption{\label{fig:julia}Julia sets (in black) of the four functions from Section~\ref{S.examples}.}
 \end{figure} 

A {\tef} $f$ is \emph{parabolic} if $S(f) \subset F(f)$, and $J(f) \cap P(f)$ is finite and contained in $\Par(f)$. Note that Examples~\ref{ex:exp} and \ref{ex:sine} are both parabolic, and so covered by the results of \cite{mashael}. Our next example is a {\tef} that is  geometrically finite but not parabolic.
\begin{example}
Let $f_3(z) = z e^z$. Then $f_3$ is geometrically finite, but not parabolic, not strongly geometrically finite, and not docile. 
\end{example}
\begin{proof}
It can be seen that $f_3$ has one asymptotic value, at the origin, which is fixed and parabolic. It also has one critical value, at $-1/e$, and this is in the parabolic basin. Hence 
$f_3$ is geometrically finite, but not parabolic since $S(f_3)\cap J(f_3)\neq \emptyset$.
Since $0\in J(f_3)$ is an asymptotic value, $f_3$ is also not strongly geometrically finite. 

To show that $f_3$ is not docile, let $g\colon z\mapsto f_3(\lambda z)$ be of
 disjoint type, and suppose by contradiction that $\theta\colon J(g) \cup\{\infty\}\to J(f_3)$ 
 is a semiconjugacy as in Proposition~\ref{prop:docilesemiconjugacy}. Let $z_0\in J(g)$ with
 $\theta(z_0)=0$. The compact set $\theta^{-1}(0)\subset \Ch$ is a subset of $\C$ by 
 Proposition~\ref{prop:docilesemiconjugacy}~\ref{item:infinityfixed}. Since $z_0$ is not
 a Picard exceptional value of $g$, there is 
 $z_1\in g^{-1}(z_0)\setminus \theta^{-1}(0)$. But then 
 $0\neq w_1 \defeq \theta(z_1)\in\C$ and 
    \[ f_3(w_1)=f_3(\theta(z_1))=\theta(g(z_1))=\theta(z_0)=0 \] 
    by
 Proposition~\ref{prop:docilesemiconjugacy}~\ref{item:juliasets}.
 This is a contradiction since $f_3^{-1}(0)=\{0\}$. 
\end{proof}
\begin{remark}
 A similar argument shows that, more generally, a function having a direct asymptotic
  value in the Julia set cannot be docile. 
\end{remark}
Our final example is a {\tef} which is not parabolic, but which is \emph{strongly} geometrically finite.
\begin{example}
For each $a \in \C$ such that $\cos a \ne 0$, define the {\tef}
\[
g_a(z) = \frac{\sin(z+a) - \sin a}{\cos a}. 
\]
For a suitable value of $a$, the function $f_4 \defeq g_a$ is strongly geometrically finite,
 but neither subhyperbolic nor parabolic. The Julia set $J(f_4)$ is locally connected.
\begin{proof}
Note that $g_a$ has no asymptotic values, and 
\[
S(g_a) = CV(g_a) = \left\{\frac{\pm 1 - \sin a}{\cos a}\right\}.
\]
Also, the origin is a parabolic fixed point of $g_a$. We choose $a \in (0, \pi/2)$ such that 
\[
\frac{1 + \sin a}{\cos a} = 2\pi;
\]
in fact, it can be seen that there is a unique such $a$, with $a \approx 1.255134$. It follows that one critical value of $g_a$ is at $-2\pi$. Since $g_a(-2\pi) = 0$, this critical values lies in the Julia set.  It can be checked that the other critical value lies in the parabolic basin, and hence $f_4$ is geometrically finite. As $f_4$ is obtained from 
$\sin$ up to pre-and post-composition with affine functions, it has no 
asymptotic values, and all critical points are simple. Therefore $f_4$ has bounded
criticality, and $f_4$ is strongly geometrically finite. By 
Corollary~\ref{cor:niceone}, the Julia set $J(f_4)$ is locally connected. 

%
\end{proof}
\end{example}

%
%
%
%
%
%
%
%
\subsection*{Acknowledgments}
We are grateful to David Mart\'i Pete, Leticia Pardo Sim\'{o}n and James Waterman for helpful discussions and feedback concerning this paper.

%
%
%
%
%
%
%
%
\bibliographystyle{amsalpha}
\bibliography{geometricallyfinite}
\end{document}